\numberwithin{equation}{section}
\newtheorem{theorema}{Theorem}
\newtheorem{proposition}[theorema]{Proposition}
\newtheorem*{hh-thm}{Hall-Higman Theorem}
\newtheorem{propo}{Proposition}[section]
\newtheorem{lemma}[propo]{Lemma}
\newtheorem{corol}[propo]{Corollary}
\newtheorem{theo}[propo]{Theorem}
\newtheorem{examp}[propo]{Example}
\newtheorem{remar}[propo]{Remark}
\newcommand{\ld}{,\ldots ,}
\newcommand{\ra}{ \rightarrow }
\newcommand{\lan}{ \langle }
\newcommand{\ran}{ \rangle }
\newcommand{\diag}{\mathop{\rm diag}\nolimits}
\newcommand{\Id}{\mathop{\rm Id}\nolimits}
\newcommand{\Irr}{\mathop{\rm Irr}\nolimits}
\newcommand{\spec}{\mathop{\rm Spec}\nolimits}
\newcommand{\Syl}{\mathop{\rm Syl}}
\newcommand{\EC}{\mathcal{E}}
\newcommand{\CC}{\mathbb{C}}
\newcommand{\FF}{\mathbb{F}}
\newcommand{\GG}{\mathbf{G}}
\newcommand{\SB}{\mathbf{S}}
\newcommand{\QQ}{\mathbb{Q}}
\newcommand{\ZZ}{\mathbb{Z}}
\newcommand{\al}{\alpha}
\newcommand{\be}{\beta}
\newcommand{\ep}{\varepsilon}
\newcommand{\eps}{{\varepsilon}}
\newcommand{\lam}{\lambda }
\newcommand{\om}{\omega}
\newcommand{\up}{^{-1}}
\newcommand{\si}{\sigma }
\def\d12{{_{12}}}
\def\acf{{algebraically closed field }}
\def\au{{automorphism }}
\def\ccc{{constituent }}
\def\ei{{eigenvalue }}
\def\eis{{eigenvalues }}
\def\f{{following }}
\def\ii{{if and only if }}
\def\ir{{irreducible }}
\def\irt{{irreducible. }}
\def\irr{{irreducible representation }}
\def\itf{{It follows that }}
\def\mult{{multiplicity }}
\def\po{{polynomial }}
\def\rep{{representation }}
\def\reps{{representations }}
\def\syls{{Sylow $p$-subgroups }}
\def\syl{{Sylow $p$-subgroup }}
\newcommand{\SC}{\mathbf{S}}
\newcommand{\Fr}{\mathsf{F}}
\renewcommand{\setminus}{\smallsetminus}
\newcommand{\edit}[1]{{\color{red} #1}}
\def\PSL{{\rm PSL}}
\def\SL{{\rm SL}}
\def\SO{{\rm SO}}
\def\Spin{{\rm Spin}}
\def\PSU{{\rm PSU}}
\def\SU{{\rm SU}}
\def\PSp{{\rm PSp}}
\def\GL{{\rm GL}}
\def\Sp{{\rm Sp}}
\def\F{{\mathsf F}}
\def\U{{\rm U}}
\def\2k{2^k}
\def\st{Suppose that }
\newcommand{\bp}{\begin{proof} }
\newcommand{\enp}{\end{proof}}
\newcommand{\bl}{\begin{lemma}\label}
\newcommand{\el}{\end{lemma}}
\newcommand{\med}{\medskip}
\newcommand{\TC}{{\mathbf T}}
\newcommand{\StC}{\mathsf{St}}
\begin{document}

 \title[Minimal polynomials of $p$-elements of finite groups of Lie type]{Minimal polynomials of $p$-elements  of finite\\ groups of Lie type with cyclic Sylow $p$-subgroups} 

\author[Pham Huu Tiep]{Pham Huu Tiep}
\address{Department of Mathematics, Rutgers University, Piscataway, NJ 08854}
\email{tiep@math.rutgers.edu}

\author[A.E.  Zalesski]{Alexandre Zalesski}
\address{Department of Mathematics,  University of Brasilia, Brasilia-DF, Brazil}
\email{alexandre.zalesski@gmail.com}


\subjclass[2000]{20C15, 20C20, 20C33, 20G05, 20G40}
 \keywords{Simple groups of Lie type; cyclic Sylow p-subgroups; minimal polynomials, Hall-Higman type theorems}


\dedicatory{Dedicated to the memory of Gary Seitz}

\thanks{The first author gratefully acknowledges the support of the NSF (grant
DMS-2200850) and the Joshua Barlaz Chair in Mathematics.}

\thanks{This work was started during the authors' visit to the Isaacs Newton Institute for Mathematical Sciences, Cambridge, for their participation in the programme ``Groups, representations and applications: new perspectives''. It is a pleasure to thank the Isaac Newton Institute  for the generous hospitality and stimulating 
environment during the programme supported by EPSRC grant no EP/R014604/1.}

\thanks{The authors are grateful to Gerhard Hiss for his comments on results of \cite{GH} from which he deduced the explicit shape of the 3-decomposition matrix for $G=D_4^-(q)$ for $3|(q-1)$ reproduced in Section 4.}


\begin{abstract}
Extending earlier results of the authors on minimal polynomials of $p$-elements of finite groups of Lie type in cross-characteristic representations, this paper focuses on the case where Sylow $p$-subgroups are cyclic and $p$ is distinct from the representation field characteristic.  
\end{abstract}
\maketitle  

\tableofcontents

\section{Introduction}

This paper   extends   and refines  some results in our earlier papers \cite{Z3}, \cite{TZ8} and  \cite{TZ20}. We focus on \ir $\ell$-modular \reps of  quasi-simple finite groups of Lie type $G=G(q)$ whose Sylow $p$-subgroups are cyclic for some prime $p\neq \ell,p\nmid q$.

The main result of the paper can be described as follows. 
 
\begin{theorema}\label{mm1} Let  $G\neq A_1(q)$ be a universal quasisimple group of Lie type in 
characteristic $r$, and let $\phi$ be a non-trivial absolutely irreducible representation of $G$ in characteristic $\ell \neq r$. Let $g\in G$ be a $p$-element with $p\neq r,\ell$. Suppose that Sylow   $p$-subgroups of $G$ are cyclic. Then the degree $\deg \phi(g)$ of the minimal \po  of $\phi(g)$ is equal to $|g|$ or $|g|-1$. 
\end{theorema}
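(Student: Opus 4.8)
The plan is to reduce the statement to a single inequality on character values and then to verify that inequality using known lower bounds for $\dim\phi$ together with upper bounds for the values of $\phi$ on the non-trivial powers of $g$. Write $n=|g|=p^k$. Since $p\neq\ell$, the element $\phi(g)$ is semisimple, so its minimal polynomial is separable and $\deg\phi(g)$ equals the number of \emph{distinct} eigenvalues of $\phi(g)$, all of which are $n$-th roots of unity. Fix a primitive $n$-th root of unity $\zeta$ and let $m_j$ denote the multiplicity of $\zeta^j$ as an eigenvalue of $\phi(g)$, for $0\le j\le n-1$. Because $g$ is an $\ell'$-element, the Brauer character $\beta$ of $\phi$ satisfies $\beta(g^i)=\operatorname{tr}\phi(g^i)$ and $m_j=\frac1n\sum_{i=0}^{n-1}\beta(g^i)\zeta^{-ij}$. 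Thus $\deg\phi(g)=\#\{j:m_j\neq0\}$, and the theorem is equivalent to the assertion that \emph{at most one} of the multiplicities $m_j$ vanishes.

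To control the number of vanishing multiplicities I would use orthogonality on $\langle g\rangle$. Setting $s=\#\{j:m_j\neq0\}=\deg\phi(g)$ and $S=\sum_{i=1}^{n-1}|\beta(g^i)|^2$, Parseval's identity on the cyclic group $\langle g\rangle$ gives $\sum_j m_j^2=\frac1n\bigl((\dim\phi)^2+S\bigr)$, while Cauchy--Schwarz applied to the non-zero multiplicities yields $(\dim\phi)^2=\bigl(\sum_j m_j\bigr)^2\le s\sum_j m_j^2$. Combining these,
\[
s\;\ge\;\frac{n\,(\dim\phi)^2}{(\dim\phi)^2+S}.
\]
A short computation shows that $s>n-2$, and hence $s\in\{n-1,n\}$, as soon as
\[
S=\sum_{i=1}^{n-1}|\beta(g^i)|^2\;<\;\frac{2\,(\dim\phi)^2}{n-2}.
\]
So the entire problem is reduced to establishing this last inequality.

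The remaining work is to prove the displayed bound on $S$. Here the hypothesis that Sylow $p$-subgroups of $G$ are cyclic enters decisively: it pins down, via the known classification of such triples $(G,p,q)$, the unique $\Phi_d$ governing $p$, forces $g$ into a cyclic maximal torus, and bounds $n=p^k$ by the $p$-part of the relevant value $\Phi_d(q)$. For the numerator I would invoke the Land\'azuri--Seitz--Zalesski lower bounds for $\dim\phi$ and their refinements in the modular setting. For the individual terms $|\beta(g^i)|$, each $g^i\neq1$ is a semisimple $\ell'$-element, so I would bound $|\beta(g^i)|$ by reducing to ordinary characters and applying Deligne--Lusztig theory together with the cross-characteristic character estimates from \cite{Z3}, \cite{TZ8} and \cite{TZ20}; the cyclic structure keeps the centralizers $C_G(g^i)$, and hence these values, small. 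Summing over the $n-1$ non-trivial powers then yields the required inequality for all but finitely many configurations.

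The main obstacle is precisely the boundary regime in which $n=p^k$ is comparable to $\dim\phi$: this happens for the minimal (and other small) representations paired with cyclic tori of large $p$-part, for small rank, or for small $q$, where the generic estimates are too weak and can genuinely fail. In these cases I expect to argue directly from explicit data --- character tables, Brauer trees, and decomposition matrices (for instance the $3$-decomposition matrix of $D_4^-(q)$ recorded later in the paper) --- computing the multiplicities $m_j$ outright; these are also the cases that produce the exceptional value $\deg\phi(g)=|g|-1$, where a single root of unity, typically the eigenvalue $1$, is absent. Finally, the exclusion of $G=A_1(q)$ is unavoidable at this last step: there the minimal representations have degree roughly $q/2$ while a $p$-element can have order close to $q$, so $(\dim\phi)^2/(n-2)$ is too small for the inequality to hold and the conclusion itself fails.
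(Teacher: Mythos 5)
Your reduction is correct as far as it goes: since $p\neq\ell$, $\phi(g)$ is semisimple, $\deg\phi(g)$ is the number of nonvanishing multiplicities $m_j$, and the chain Parseval--Cauchy--Schwarz does give $\deg\phi(g)\ge n\,(\dim\phi)^2/((\dim\phi)^2+S)$, so that $S<2(\dim\phi)^2/(n-2)$ would indeed force $\deg\phi(g)\ge n-1$. The gap is that this inequality is never established, and the plan you sketch for it cannot work. The critical cases of the theorem are exactly those where $n=|g|$ is comparable to $\dim\phi$ (e.g.\ $G=\SL_d^\ep(q)$ with $g$ generating the image of the Singer torus and $\phi$ of Weil/unipotent type of degree $(q^d-q)/(q-\ep 1)\approx |g|$); there your criterion requires $S<2(\dim\phi)^2/(n-2)\approx 2n$, i.e.\ the values $|\beta(g^i)|$ must be of absolute value roughly $1$ \emph{on average}. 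No generic estimate delivers this: even the best soft bound coming from column orthogonality, $|\beta(g^i)|\lesssim\sqrt{|C_G(g^i)|}\approx\sqrt{n}$, only gives $S\lesssim n^2$, off by a factor of order $n$. Worse, these tight configurations are not ``finitely many'': they are infinite families in $q$ (precisely the families listed in Theorem B(II)), so deferring them to ``explicit data'' means redoing, for every $q$, the entire content of the theorem rather than cleaning up a finite residue.

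The second, related, problem is that the quantities you need to bound are values of irreducible \emph{Brauer} characters at the elements $g^i$, and the papers you cite do not provide such bounds. A Brauer character is an integral combination of ordinary characters restricted to $\ell'$-elements, with decomposition numbers that are not a priori bounded; controlling $\beta(g^i)$ therefore requires exactly the block-theoretic and decomposition-matrix input that constitutes the actual proof in the paper (the Brou\'e--Michel/Geck--Hiss basic-set results, the vanishing/constancy statements of Lemmas \ref{nn1} and \ref{rm1}, the Green-correspondence argument of Lemmas \ref{20tz}, \ref{u23}, \ref{zg8} showing the eigenvalue $1$ occurs, and, for $D_4^-(q)$, the explicit decomposition matrices of Section 5). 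In other words, your Fourier-analytic frame does not bypass the hard part; it reorganizes the problem into an inequality which, in the boundary cases where the theorem has content, holds only with a constant factor to spare (for $\tau|_P=\rho_P^{reg}-1_P$ one gets equality $n\,D^2/(D^2+S)=n-1$), so that nothing short of the exact character values --- obtained by the paper's structural methods of embedding $g$ into small subgroups such as $\SL_d^\ep(q^{n/d})$, $\SU_3(q^3)$, $\SL_2(q^7)$ and invoking block theory there --- can close it. Finally, the exclusion of $A_1(q)$ is not what saves the inequality elsewhere; the same tightness occurs in all the higher-rank exceptional families, where the conclusion $\deg\phi(g)=|g|-1$ is genuinely attained.
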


Here, by a {\it universal quasisimple group of Lie type in characteristic $r$} we mean the group $G=\mathbf{G}^\F :=\{g\in \mathbf{G} \mid \F(g)=g\}$, where $\mathbf{G}$ is a simple, simply connected algebraic group in characteristic $r>0$ and 
$\F:\mathbf{G} \to \mathbf{G}$ is a Steinberg endomorphism (in the sense of \cite[Definition 21.3]{MaT}).
In the special case where $|g|$ is a prime  this result was obtained in our previous work \cite{TZ8}, so here we extend it to $|g|$  an arbitrary prime power. Note that \cite{TZ8}
contains a number of partial results for $|g|$ being a prime power, in particular, the case $G$ of type $A_1$ has been settled in \cite[Lemma 3.3]{TZ8}. 

 The problem on the minimal \po degree of $p$-elements in finite linear groups and group \reps goes back to the famous Hall--Higman theorem  \cite{HH}. We state it here for $p>2$:

\begin{hh-thm}  Let G be a finite p-solvable  \ir linear group over a field of characteristic $p>2$, and $g\in G$ an element of order $p^k$. Then $\deg g$, the degree of the minimal \po of g, is at least $(p-1)p^{k-1}$.
\end{hh-thm}

This is an important result which was used in an essential way in  the classification of finite simple groups. It has many refinements and is largely discussed in the literature (see for instance \cite[Ch. IX, \S 2]{HB} and \cite[Ch. VII, \S 10]{Fe}). In particular, it was observed
that a similar result holds for an arbitrary  ground field provided the $p$-element
in question does not lie in any abelian normal subgroup of $G$ and $G$ is $p$-solvable
(see \cite[Ch. VII, Theorem 10.2]{Fe} for a slightly weaker result). The  problem of obtaining a sharp lower bound  for the minimal \po degree of $p$-elements in arbitrary groups attracted a lot of attention too.
The bulk of the problem is to study quasisimple $G$, where  groups of Lie type in 
 characteristic $p$ are excluded so that one can expect a
lower bound similar to that of Hall and Higman.  

The project of extending  the Hall--Higman theorem 
 to arbitrary finite groups was started in \cite{Z3} and significantly developed further  in \cite{DZ1} and \cite{TZ8}.
(If $G$ is of Lie type then $\ell$ is assumed to differ from the defining characteristic $r$ of $G$.)  Both  \cite{DZ1} and \cite{TZ8} deal with quasi-simple classical groups $G$  and semisimple $p$-elements $g\in G$, where we obtain the Hall--Higman lower bound $|g|(p-1)/p$ in almost all situations.
 On many occasions the degree in question is equal to the order of $g$ in $G/Z(G)$.  Moreover,  Sylow $p$-subgroups of $G$ are cyclic in most cases where the degree remains undetermined. The quasisimple groups of Lie type of small rank are considered in \cite{TZ20}.

In the current paper we focus on the case where Sylow $p$-subgroups of $G$ are cyclic.  
Note that for $p=\ell$, and for all quasi-simple groups $G$ with cyclic Sylow $p$-subgroup, all cases where the degree in question is less than $|g|$ have been determined in \cite{Z3}.The methods used in cite{Z3} do not work in the case with $p\neq \ell\neq 0$, and the problem  remained open.  In this paper  we assume that  $p\neq \ell$.

Although obtaining Theorem \ref{mm1} is the main target of this work, we additionally  identify  many cases of the \reps $\phi$ in question with $\phi(g)=|g|$ for all $p$-elements   $g\in G$. We show that if $ \deg\phi(g)=|g|-1$ then 1 is the only $|g|$-root of unity that is not an \ei of $\phi(g)$. The  absence of \ei 1 has geometric meaning as this is equivalent to saying that $\phi(g)$ acts fixed-point-freely on the 
module afforded by $\phi$.  Therefore,   we try  to describe $g\in G$ such that $\deg\phi(g)=|g|$. 
Details are in the following theorem, where $\Phi_m$ is the $m$-th cyclotomic polynomial.

\begin{theorema}\label{mm2} Let  $G$ be a universal quasisimple group of Lie type in characteristic $r >0$,  and 
let $\phi$ be a non-trivial absolutely irreducible representation of $G$ in characteristic $\ell \neq r$. Let $g\in G$ be a $p$-element for $p\neq r,\ell$, and let $\deg\phi(g)$ denote the degree of 
the minimal polynomial of $\phi(g)$. Suppose that Sylow $p$-subgroups of $G$ are cyclic. Then one of the following statements holds.
\begin{enumerate}[\rm(I)]
\item $\deg\phi(g)=|g|$. 
\item $\deg\phi(g)=|g|-1$, $g$ is a generator of a Sylow $p$-subgroup of $G$, and all $|g|$-roots of unity but $1$ are eigenvalues of $\phi(g)$. Moreover, 
one of the \f statements holds.

\begin{enumerate}[\rm(a)]
\item $G= \SL_n(q),$ $n=d^a$ for some odd prime $d$, $p > n$,  
$|g|=(q^n-1)/(q^{n/d}-1)$, and $d \nmid (q^{n/d}-1)$.

\item $G=\SU_n(q)$ and $p > n$. Furthermore, there exist an odd prime $d$ and some $j \in \{0,1\}$ such that 
$n-j=d^a>1$ and $|g|=(q^{n-j}+1)/((q^{(n-j)/d}+1)\cdot\gcd(d,\ell,q^{n/d}+1))$. In addition,
either $d \nmid (q^{(n-j)/d}+1)$, or $d=\ell$ and $\ell \mid (q^{(n-j)d}+1)$.
  
\item
$G= \Spin^-_{2n}(q)$,  $\Spin^+_{2n+2}(q)$, or $\Spin_{2n+1}(q)$,  $n=d^a>3$ for an odd prime d, 
$|g|=(q^n+1)/((q^{n/d}+1)\cdot\gcd(d,\ell,q^{n/d}+1))$, and either 
$d \nmid (q^{n/d}+1)$, or $d=\ell \mid (q^{n/d}+1)$.
 
\item
$G= \Sp_{2n}(q)$,  $q$  odd, n is a $2$-power. Furthermore, either $|g|=(q^n+1)/2$ and $\dim\phi=(q^n-1)/2$;  
or $\ell=2$ and $(q,n,|g|)=(3,4,41)$ or $n>2, $ $q-1=2^a \geq 4$. 
 
\item
$G=E_6(q)$ and $|g|=q^6+q^3+1$ and $3\nmid  (q-1).$ 

\item
$G\in \{^2E_6(q), E_7(q)\}$, $|g|=q^6-q^3+1$ and $3\nmid  (q+1).$

\item
$G=E_8(q)$ and $|g|\in\{\Phi_{15}(q),\Phi_{20}(q), \Phi_{30}(q)\}.$ 
\end{enumerate}
\end{enumerate}
\end{theorema}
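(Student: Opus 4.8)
The plan is to reduce the whole statement to counting the distinct eigenvalues of $\phi(g)$, to settle the qualitative assertions of part (II) by a short fusion argument, and only then to run the case-by-case analysis over the Lie type of $G$ that produces the list (a)--(g). Since $G=A_1(q)$ is treated in \cite{TZ8}, I assume $G\neq A_1(q)$, so that Theorem \ref{mm1} applies. As $|g|$ is a power of $p$ and $p\neq\ell$, the minimal polynomial of $\phi(g)$ divides the separable polynomial $X^{|g|}-1$ over $\overline{\FF_\ell}$; hence $\phi(g)$ is semisimple and $\deg\phi(g)$ equals the number of distinct eigenvalues of $\phi(g)$, all of which lie among the $|g|$ distinct $|g|$-th roots of unity. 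Thus $\deg\phi(g)\le|g|$, and by Theorem \ref{mm1} we have $\deg\phi(g)\in\{|g|,|g|-1\}$. Case (I) is exactly the situation where every $|g|$-th root of unity occurs, so from now on I assume the remaining case, where exactly one root $\zeta_0$ is missing, and write $m_\eta$ for the multiplicity of an eigenvalue $\eta$.

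Next I would pin down $\zeta_0$ and the order of $g$. Let $P=\langle x\rangle$ be a cyclic Sylow $p$-subgroup containing $g$ and put $W_p=N_G(P)/C_G(P)\hookrightarrow\Aut(P)=(\ZZ/|P|)^\times$. Since $P\le C_G(P)$, the image of the Sylow subgroup $P$ of $N_G(P)$ in $W_p$ is trivial, so $W_p$ is a $p'$-group; and $W_p\neq1$, for otherwise Burnside's normal $p$-complement theorem would give $G$ a nontrivial $p$-group quotient, impossible as $G$ is perfect. In particular $p$ is odd (a quasisimple group has no cyclic Sylow $2$-subgroup), so reduction modulo $p$ embeds $W_p$ into $(\ZZ/p)^\times$ and each nontrivial $w\in W_p$ acts on $P$ by $x\mapsto x^{w}$ with $w\not\equiv1\pmod p$. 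As $\langle g\rangle$ is characteristic in $P$, conjugation by $N_G(P)$ induces these same maps on $\langle g\rangle$; since $\phi(g^{w})=\phi(g)^{w}$ is conjugate to $\phi(g)$, the eigenvalue multiset of $\phi(g)$ is invariant under $\eta\mapsto\eta^{w}$. The lone missing root then satisfies $\zeta_0^{w}=\zeta_0$ for some $w\not\equiv1\pmod p$, and $\gcd(w-1,p)=1$ forces $\zeta_0=1$: thus $g$ acts fixed-point-freely. Finally $g$ must generate $P$, for if $g=x^{p^{j}}$ with $j\ge1$ then $V^{\langle x\rangle}\subseteq V^{\langle g\rangle}=0$ makes $x$ deficient as well, so all nontrivial eigenvalues of $\phi(x)$ are present and $m_1(g)=\sum_{\zeta^{p^{j}}=1}m_\zeta(x)\ge p^{j}-1>0$, contradicting $V^{\langle g\rangle}=0$. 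This establishes every assertion of (II) preceding the list.

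It remains to determine the pairs $(G,p)$ and the representations for which this deficient situation actually occurs. Deficiency is equivalent to $\dim V^{\langle g\rangle}=0$, and, expressing the Brauer character $\varphi$ of $\phi$ on the $\ell'$-element $g$ through the $\ell$-decomposition numbers as $\varphi=\sum_\chi d_{\chi\phi}\,\chi$, one has $\dim V^{\langle g\rangle}=\sum_\chi d_{\chi\phi}\langle\chi|_{\langle g\rangle},1\rangle$, a combination of trivial-character multiplicities computable from ordinary character values on the powers of $g$. Here $g$ is a generator of a cyclic maximal torus $T$, and the hypothesis that Sylow $p$-subgroups are cyclic forces $p$ to divide a single cyclotomic value $\Phi_d(q)$ with the associated $\Phi_d$-torus cyclic; classifying the admissible $d$ (the relevant regular numbers) simultaneously produces $G$, the value $|g|$ as the $p$-part of $|T|$, and the arithmetic side conditions. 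This is how the families arise: Singer-type tori of $\SL_n(q)$ and $\SU_n(q)$ (with $p>n$ guaranteeing cyclicity) give (a) and (b); the $\Phi_{2n}$-tori of the spin groups give (c); the tori of order $q^n+1$ underlying the Weil representations of $\Sp_{2n}(q)$ give (d); and the Coxeter-type tori $\Phi_9,\Phi_{18},\Phi_{15},\Phi_{20},\Phi_{30}$ of $E_6,{}^2E_6,E_7,E_8$ give (e)--(g). For each family one locates the few small representations (deleted permutation, Weil, or specific unipotent characters) whose restriction to $\langle g\rangle$ omits the trivial character, and verifies that no other irreducible $\phi$ is deficient, i.e. that $\sum_\chi d_{\chi\phi}\langle\chi|_{\langle g\rangle},1\rangle$ is positive otherwise.

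The hard part is this last verification, and it is where the side conditions are forced. For the classical families it rests on explicit knowledge of Weil and permutation characters and their reductions modulo $\ell$, which also explains the corrections involving $\gcd(d,\ell,\cdot)$ and the genuinely modular subcases ($d=\ell$, and $\ell=2$ for $\Sp_{2n}(q)$, including the sporadic $\Sp_8(3)$ entry with $|g|=41$), where the trivial-character multiplicity is sensitive to the decomposition matrix. The main obstacle, however, is the exceptional groups in (e)--(g): there one must compute $\dim V^{\langle g\rangle}$ for every irreducible $\ell$-modular $\phi$ on a regular generator of the $\Phi_d$-torus, using the known ordinary character values on these tori together with the only partially available $\ell$-decomposition matrices for $E_6,{}^2E_6,E_7,E_8$. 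Controlling the trivial-character multiplicity with incomplete decomposition data, and proving that for all $|g|$ outside the listed ones no deficiency can occur, is the most delicate step, and the one relying on the auxiliary decomposition-matrix input acknowledged in the introduction.
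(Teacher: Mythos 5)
Your opening two paragraphs are correct and essentially reproduce the paper's own argument for the preamble of part (II): the paper proves exactly this in the Remark following the proof of Theorem \ref{mm1}, using $N_G(P)\neq C_G(P)$ and an element $x\in N_G(P)$ with $xgx^{-1}=g^i$, $i\mid(p-1)$, $i>1$, to conclude that the unique missing eigenvalue must be $1$; and the fact that $g$ must generate a Sylow $p$-subgroup is handled inside the individual propositions (e.g.\ Lemma \ref{um8}, Proposition \ref{z00}) by the same kind of argument you give. So the qualitative half of (II) is fine.

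The genuine gap is that the list (a)--(g), which is the actual content of the theorem and of the paper, is never proved: your last two paragraphs describe a computational program rather than execute one, and the program as stated would not work. First, your key identity is backwards: the decomposition numbers satisfy $\chi^\circ=\sum_\phi d_{\chi\phi}\,\phi$, not $\varphi=\sum_\chi d_{\chi\phi}\,\chi$; a Brauer character is only an \emph{integral} linear combination, with possibly negative coefficients, of ordinary characters restricted to $\ell'$-elements (Lemmas \ref{kk8}, \ref{gh6}), and tracking those signs is precisely what the paper's computations do (e.g.\ $\be_2=\chi_{2.}-1_G$ in the $D_4^-(q)$ analysis of Lemma \ref{qe4}). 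Second, $g$ is in general \emph{not} a generator of the cyclic maximal torus containing it: in case (a), $|g|=(q^n-1)/(q^{n/d}-1)$ while the torus has order $(q^n-1)/(q-1)$. Third, and most seriously, the proposed uniform verification ``compute $\dim V^{\langle g\rangle}$ for every irreducible $\phi$ from the decomposition matrix'' is not feasible: $\ell$-decomposition matrices are unknown for the classical groups of unbounded rank appearing in (a)--(d). The paper's route is entirely different: it reduces each family to a small-rank subgroup containing $g$ --- $\SL_d^\ep(q^{n/d})$ for (a),(b) (Lemma \ref{kt5}, Propositions \ref{z00}, \ref{zu3}), $\Spin_8^-(q^{n/4})$ and $\SU_n(q)$ for (c) (Proposition \ref{zo1}, Lemmas \ref{dn8}, \ref{qe4}), $\Sp_4(q^{n/2})$ plus Weil-representation recognition \cite{GMST} for (d) (Propositions \ref{tt4}, \ref{gu1}, Corollary \ref{ss2}), and explicit subgroup embeddings, Hall--Higman--Shult (Theorem \ref{hhs}), and regular-torus block theory (Lemmas \ref{20tz}, \ref{rm1}, \ref{mt6}) for (e)--(g) (Lemmas \ref{ee2}, \ref{aa2}) --- and only at those small-rank anchors ($\Sp_4(q)$, $D_4^-(q)$, $\SU_3(q)$) does it invoke known decomposition matrices, supplemented by Green correspondence arguments (Lemmas \ref{u23}, \ref{20tz}) to handle non-liftable characters. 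None of these reductions, nor any substitute for them, appears in your proposal, so the classification itself remains unproven.
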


Throughout the paper, we use the convention that $\gcd(d,\ell,m)=1$ if $\ell=0$.

 Note that the question which cases listed in Theorem B(II) actually yield the equality $\deg \phi(g)=|g|-1$ remains open. 
Also, the problem of extending the results of our paper \cite{TZ8}  
to quasi-simple groups of exceptional Lie type is a subject of another paper in preparation.
Another  open case of the minimal polynomial problem
is that of alternating groups $G$.

For completeness, in Proposition \ref{m9t} below we handle the case of quasisimple groups of Lie type with exceptional Schur multiplier, still with cyclic
Sylow $p$-subgroups.

\begin{proposition}\label{m9t} Let $G$ be finite quasisimple group such that $G/Z(G)$ is a simple group of Lie type in   characteristic $r>0$, $r $ divides $ |Z(G)|$, and
$G$ has cyclic Sylow $p$-subgroups for some $p \neq r$. Let $\phi$ be a faithful, absolutely irreducible representation of $G$ in characteristic $\ell \neq p,r$. 
Suppose that $\deg\phi(g)<|g|$ for some $p$-element $g \in G$. Then one of the \f holds:

\begin{enumerate}[\rm(i)]
\item $G=2 \cdot\SL_2(4)$, $\ell\neq 2$,  $|g|=3,5$ and $\dim\phi=\deg\phi(g)=2,$ or $\ell\neq 2$, $|g|=5$ and  $\dim\phi=\deg\phi(g)=4;$

\item $G=2\cdot\SL_3(2)$, $\ell=7$, $|g|=3$ and $\dim\phi=\deg\phi(g)=2$, or 
$\ell\neq 2$,  $|g|=7$ and $\dim\phi=  \deg\phi(g)=4;$

\item $G=3\cdot\PSL_2(9)$, $\ell\neq 3$,  $|g|=4,5$, and $\dim\phi=\deg\phi(g)=3;$ 

\item $G=3\cdot\PSU_4(3)$, $\ell=2$,  $|g|=7$, and $\dim\phi=\deg\phi(g)=6;$
 
\item $G=6\cdot\PSU_4(3)$, $\ell\neq 2,3$,  $|g|=7$, and $\dim\phi=\deg\phi(g)=6;$

\item $G=2\cdot\Sp_6(2)$, $\ell\neq 2$,  $|g|=5$, $\dim\phi=8$, and $\deg\phi(g)=4;$
 
\item $G=2\cdot G_2(4)$, $\dim\phi=12$, $|g|=7,13$, and $\deg\phi(g)=|g|-1;$

\item $G=2 \cdot Sz(8)$, $\ell=5$, $\dim\phi=8$, $|g|=13$, and $\deg\phi(g)=8.$
\end{enumerate}
\end{proposition}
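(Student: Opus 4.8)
The plan is to reduce the statement to a finite, explicit verification by exploiting the very restrictive hypothesis $r\mid |Z(G)|$. Since $G$ is quasisimple with $S:=G/Z(G)$ simple of Lie type in characteristic $r$, the centre $Z(G)$ embeds into the Schur multiplier $M(S)$; and for all but finitely many $S$ the multiplier $M(S)$ coincides with the centre of the simply connected cover $\mathbf{G}^\F$, whose order is prime to $r$. The hypothesis $r\mid |Z(G)|$ therefore forces $S$ into the short, classical list of simple groups of Lie type carrying an \emph{exceptional} Schur multiplier divisible by the defining characteristic (as tabulated in the Atlas of Finite Groups). Concretely this leaves $\PSL_2(4)$, $\PSL_2(9)$, $\SL_3(2)=\PSL_2(7)$, $\PSU_4(3)$, $\Sp_6(2)$, $G_2(4)$ and $Sz(8)$, together with the further candidates $\PSL_3(4)$, $A_8$, $O_8^+(2)$, $U_6(2)$, $G_2(3)$, $F_4(2)$ and ${}^2E_6(2)$, which I expect to be eliminated at the final step. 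For each surviving $S$ one records the finitely many central extensions $G$ with $r\mid|Z(G)|$, namely those carrying the $r$-divisible part of $M(S)$; note that the relevant $\phi$ are genuinely new faithful representations on which the $r$-part of $Z(G)$ acts nontrivially, so Theorems~\ref{mm1} and~\ref{mm2} do not apply to them directly.

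First I would, for each such $G$, determine which primes $p\neq r$ are compatible with the hypothesis that Sylow $p$-subgroups of $G$ are cyclic, and for each admissible $p$ list the possible orders $|g|=p^k$ of $p$-elements. As $|G|$ is explicitly known, this is immediate from its factorisation: cyclicity forces the $p$-part of $|G|$ to be cyclic, which typically isolates a small set of primes $p$ (often those dividing $|G|$ exactly once) and bounds $k$. This produces a finite set of admissible triples $(G,p,|g|)$ to test.

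The core of the argument is the eigenvalue computation. Since $p\neq\ell$, every power of $g$ is $\ell$-regular and $\phi(g)$ is diagonalisable with eigenvalues that are $|g|$-th roots of unity; hence $\deg\phi(g)$ equals the number of distinct such roots occurring, and $\deg\phi(g)<|g|$ holds precisely when some $|g|$-th root of unity fails to be an eigenvalue. The multiplicity of a given root $\zeta$ is recovered from the values of the ($\ell$-)Brauer character $\chi$ of $\phi$ on the powers $g,g^2,\dots,g^{|g|-1}$ by the inversion formula $\tfrac1{|g|}\sum_{i}\chi(g^i)\zeta^{-i}$. Thus for each faithful absolutely irreducible $\phi$ of each admissible $G$ in each characteristic $\ell\neq p,r$, I would read off these Brauer character values from the (modular) character tables, compute the eigenvalue multiplicities, and test whether any root of unity is absent. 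Running this through all admissible $(G,p,\ell)$ and all faithful $\phi$ eliminates the provisional extra groups above and leaves exactly the cases (i)--(viii).

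The main obstacle is this last step: assembling complete and correct modular data --- decomposition matrices and Brauer character tables --- for every faithful irreducible of each exceptional cover across all relevant primes $\ell$, and guaranteeing that no faithful $\phi$ is overlooked. For the covers with large or less standard multipliers, notably $3\cdot\PSU_4(3)$ and $6\cdot\PSU_4(3)$ and the covers of $G_2(4)$ and $Sz(8)$, the required $\ell$-modular information is the delicate part; here I would rely on the Atlas, the modular Atlas, and the \textsf{GAP} character table library, cross-checking the computed eigenvalue patterns against the known power maps. The bookkeeping needed to confirm completeness --- that every pair $(\ell,\phi)$ absent from the list satisfies $\deg\phi(g)=|g|$ --- is routine in principle but is precisely where care is most needed.
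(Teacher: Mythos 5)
Your overall skeleton is the same as the paper's: the hypothesis $r\mid |Z(G)|$ forces $G/Z(G)$ onto the finite list of simple groups of Lie type with exceptional Schur multiplier, and the problem becomes a finite verification over the admissible $(G,p,\ell,\phi)$. However, there are two concrete gaps in your execution plan. The smaller one: your enumeration omits $\Omega_7(3)$ (exceptional multiplier $C_6$, defining characteristic $r=3$), whose covers $3\cdot\Omega_7(3)$ and $6\cdot\Omega_7(3)$ satisfy the hypothesis and must be tested for $p=5,7,13$; they are eventually eliminated, but they have to be in the list.

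The serious gap is that the core of your plan --- ``read off the Brauer character values from the modular character tables'' --- cannot be executed for precisely the hardest groups. As the paper itself points out, the Brauer characters of the nontrivial central extensions of $F_4(2)$, $\Omega_7(3)$ and ${}^2E_6(2)$ are \emph{not} available in \cite{JLPW} (nor in the $\mathsf{GAP}$ library); for these one only has the decomposition matrices of \cite{Lu}, and even those are missing for $\ell=3,5,7$ in the case of ${}^2E_6(2)$. So for the covers of ${}^2E_6(2)$ at those primes your verification has no data to run on, and no amount of ``cross-checking against power maps'' supplies it. The paper's proof gets around this with an idea your proposal lacks: because Sylow $p$-subgroups are cyclic, the $p$-element $g$ can be conjugated into an explicit, well-understood subgroup --- $\PSU_3(8)$ for $p=19$, $\Sp_4(4)$ for $p=17$, ${}^3D_4(2)$ for $p=13$, $\PSU_6(2)$ for $p=11$ --- and then one applies to the irreducible constituents of $\phi$ restricted to that subgroup either Hall--Higman-type results established independently of any tables (Proposition \ref{tt4} of this paper, and \cite[Proposition 6.1(ii)]{TZ8}, which give $\deg\tau(g)=|g|$ in every cross characteristic $\ell$), or the known Brauer tables of those much smaller groups. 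Without this reduction step, or some substitute argument not relying on the nonexistent tables, your proof cannot be completed for the covers of $F_4(2)$, $\Omega_7(3)$ and ${}^2E_6(2)$. (Note also that the groups you single out as delicate --- $3\cdot\PSU_4(3)$, $6\cdot\PSU_4(3)$ and the covers of $G_2(4)$ and $Sz(8)$ --- are in fact unproblematic: their Brauer characters are in \cite{JLPW}.)
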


\bigskip

{\it Notation.} 
We write $\ZZ$ for the set of integers. 
For integers $a,b>0$ we write $(a,b)$ for the greatest common divisor of $a,b$. 

Let $G$ be a finite group. Then $|G|$ is the order of $G$, $Z(G)$ is the center of $G$, and $O_p(G)$ the maximal normal $p$-subgroup of $G$ for a prime $p$. 
We often use $|G|_p$ to denote the $p$-part of $|G|$. For $g\in G$ the order of $g$ is denoted by $|g|$; furthermore, $G'$ denotes the derived group of a group $G$. A $p'$-element is one of order coprime to $p$.

Let $F$ be an \acf of characteristic $\ell\geq 0$. All \reps are over $F$. An \ir $\ell$-modular \rep $\phi$ of a group $G$ is called {\it liftable} if $\ell\neq 0$ and there exists a \rep of $G$ over the complex numbers whose character coincides with the Brauer character of $\phi$ on the $\ell$-regular elements.  If $\phi$ is an $F$-\rep of $G$ or the Brauer character of it, we write
$\phi\in\Irr_\ell (G)$ to state that $\phi$ is irreducible. If $\ell=0$, we drop the subscript $\ell$. We denote by $1_G$ the trivial \ir character or \rep of $G$ (both ordinary and $\ell$-modular), and by $\rho_G^{reg}$ the regular \rep of $G$ or the (Brauer) character of it.

If $H$ is a subgroup of $G$ and $\eta$ is a character or \rep of $H$ then $\eta^G$ denotes the induced character. If $\phi$ is a character or a \rep of $G$ then $\phi|_H$ stands for the restriction of $\phi$ to $H$. We also write $(\chi,\phi)$ for the inner product of complex characters $\chi,\phi$ of a group $G$.

If $\mathbf{G}$ is an algebraic group then $\mathbf{G}^\circ$ denotes the connected component of $\GG$.

Let $\mathbf{G}$ be a connected reductive algebraic group in characteristic $r > 0$. Then  
$\mathbf{G}^\F:=\{g\in \mathbf{G}: \F(g)=g\}$, where $\F:\mathbf{G} \to \mathbf{G}$ is a Steinberg endomorphism, 
is called a {\it finite group of Lie type}, and the characteristic $r$ is called the {\it defining characteristic of} $\mathbf{G}^\F$.  Thus, in our paper a finite group of Lie type always comes together with a pair $(\mathbf{G},\F)$. In particular, a {\it regular} element of $\mathbf{G}^\F$
is one that is regular in $\mathbf{G}$. A {\it maximal torus} of $\mathbf{G}^\F$ is defined as
$\mathbf{T}^\F$ for an $\F$-stable  maximal torus $\mathbf{T}$ of $\mathbf{G}$, 
we say maximal tori $\mathbf{T}^\F$, $\mathbf{S}^\F$ of $\mathbf{G}^\F$ are {\it conjugate} if 
$\mathbf{T} $ and $\mathbf{S}$ are $\mathbf{G^\F}$-conjugate. A {\it simple group of Lie type} means a non-abelian composition factor of a quasi-simple group of Lie type.

Throughout the paper we use standard terminology used  
for  finite groups of Lie type and their representations and characters;
for this we usually follow \cite{DM}. 

It is customary to write $\GL^\ep_n(q)$ and $\SL^\ep_n(q)$ for the uniform treatment of groups
$\GL _n(q)$, $\SL_n(q)$ and $\U_n(q)$, $\SU_n(q)$, respectively. Thus, $\GL^\ep _n(q)$ is $\GL_n(q)$ or $\U_n(q)$ when $\ep=+$ or $-$, respectively.   
The symbol $\ep \in \{+,-\}$ is frequently  
interpreted as $\ep 1$ in various formulas as well. For $G$ a finite classical group, with natural module $V = \FF_q^n$, $n \geq 2$, we frequently write {\it an irreducible element 
$g \in G$} to indicate that $g$ acts irreducibly on $V$ (and hence $g$ is semisimple).

For the original notion of a {\it Weil representation} over the complex numbers  we refer to \cite{Ger} and \cite{Howe}, 
where this applies to the groups $\Sp_{2n}(q)$ with $q$ odd, and $\GL_n(q)$, $\U_n(q)$ for any $q$.  
In this paper, we also use this term for $\SL_n(q)$, respectively $\SU_n(q)$, to denote any \ir constituents of the restriction of Weil representations of 
$\GL_n(q)$, respectively $\U_n(q)$. 
Furthermore, by {\it Weil representations} in any prime characteristic $\ell$ we mean the non-trivial \ir constituents of reductions modulo $\ell$ of complex Weil representations. One can consult \cite[Section 5.1]{DM18} and \cite[Sections 5 and 11]{GMST} for details.       

\section{Preliminaries}

\begin{lemma}\label{zgm} {\rm \cite[Lemma IX.2.7]{HB}}
Let $p,r$ be primes and $a,b$ positive integers such that $p^{a}=r^{b}+1$. Then either
\begin{enumerate}[\rm(i)]
\item $p = 2$, $b = 1$, and $r$ is a Mersenne prime, or

\item $r = 2, a = 1$, and $p$ is a Fermat prime, or

\item $p^{a} = 9$.
\hfill $\Box$
\end{enumerate}
\end{lemma}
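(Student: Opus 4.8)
The plan is to prove this by elementary factorization, analyzing $p^{a}=r^{b}+1$ through parity and the standard factorizations of $x^{n}\pm 1$. First I would note that if both $p$ and $r$ were odd, then $p^{a}$ would be odd while $r^{b}+1$ would be even, a contradiction; hence at least one of $p,r$ equals $2$. This splits the problem into the case $p=2$ (so $r$ is odd) and the case $r=2$ (so $p$ is odd), and the two are mutually exclusive, since $p=r=2$ gives the impossible equation $2^{a}=2^{b}+1$ with even left side and odd right side.

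In the case $p=2$ I would show that $b$ must equal $1$. Writing $r^{b}=2^{a}-1$ forces $r$ odd. If $b$ were even, then $r^{b}+1\equiv 2\pmod 4$, so $2^{a}\equiv 2\pmod 4$, forcing $a=1$ and hence $r^{b}=1$, which is impossible for a prime $r$; thus $b$ is odd. If moreover $b\geq 3$, I would use $r^{b}+1=(r+1)(r^{b-1}-r^{b-2}+\cdots +1)$ and observe that the second factor is a sum of an odd number ($b$) of odd terms, hence itself odd, and clearly strictly greater than $1$. But $r^{b}+1=2^{a}$ is a power of $2$ and so has no odd divisor exceeding $1$, a contradiction. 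Therefore $b=1$ and $r=2^{a}-1$ is a Mersenne prime, which is conclusion (i).

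In the case $r=2$ I would write $p^{a}=2^{b}+1$ with $p$ odd and split on the parity of $a$. If $a$ is odd, then $p^{a}-1=(p-1)(p^{a-1}+\cdots +p+1)$ exhibits a cofactor that is a sum of $a$ odd terms, hence odd, and it exceeds $1$ once $a\geq 3$; as this cofactor divides the $2$-power $2^{b}$, we must have $a=1$, giving the Fermat prime $p=2^{b}+1$ of conclusion (ii). If $a$ is even, say $a=2c$, then $(p^{c}-1)(p^{c}+1)=2^{b}$ forces both factors to be powers of $2$ differing by $2$; from $2^{t}-2^{s}=2$ with $s<t$ one gets $s=1$, $t=2$, so $p^{c}-1=2$ and $p^{c}=3$, whence $p=3$, $c=1$, $a=2$, and $p^{a}=9$, which is conclusion (iii).

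The argument is self-contained, and the only real care needed is bookkeeping the parities of the cofactors in the factorizations of $r^{b}+1$ and $p^{a}-1$; the one genuinely substantive point is the case $r=2$ with $a$ even, where the observation that two powers of $2$ can differ by $2$ only as the pair $(2,4)$ pins down the exceptional solution $3^{2}=2^{3}+1$. No deep input such as Zsigmondy's theorem or Mih\u{a}ilescu's theorem is required, although I would keep Zsigmondy in reserve as an alternative should the elementary parity bookkeeping become awkward.
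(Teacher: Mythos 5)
Your argument is correct, and it is worth noting at the outset that the paper offers no proof of this lemma at all: it is quoted verbatim from Huppert--Blackburn \cite[Lemma IX.2.7]{HB} with the $\Box$ indicating the proof is outsourced. So there is nothing in the paper to compare against step by step; what you have supplied is the standard self-contained elementary proof (essentially the special case of Catalan's equation accessible by parity and the factorizations of $x^n\pm 1$), and every step checks out: the parity split forcing $2\in\{p,r\}$, the reduction to $b$ odd via $r^b+1\equiv 2\pmod 4$ when $b$ is even, the odd cofactor $(r^b+1)/(r+1)>1$ killing $b\geq 3$ in the case $p=2$, the analogous odd cofactor $(p^a-1)/(p-1)$ forcing $a=1$ when $a$ is odd in the case $r=2$, and the pair of $2$-powers differing by $2$ pinning down $3^2=2^3+1$ when $a$ is even.

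The one point you should patch, though it costs only a line, is the final claim of conclusion (ii): a \emph{Fermat prime} is by definition of the form $2^{2^k}+1$, and you conclude only that $p=2^b+1$ is prime. You need the standard observation that $b$ must then be a power of $2$: if $b=mn$ with $m>1$ odd, then $2^n+1$ divides $2^{mn}+1=p$ with $1<2^n+1<p$, contradicting primality. This is the same alternating-sum factorization you already use twice, so it fits naturally into your case $r=2$. (No analogous issue arises in conclusion (i), since a Mersenne prime is simply a prime of the form $2^a-1$, with no condition on $a$ in the definition.) With that sentence added, your proof is complete, and keeping Zsigmondy ``in reserve'' is unnecessary.
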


\bl{pp7} Suppose that n is odd and $(q^n+1)/(q+1)$ is a prime power. Then
$n$ is a prime.  \el

\bp Let $p^l=(q^n+1)/(q+1)$.   
If $n=km$ with $k,m>1$ then  
$$(q^n+1)/(q^m+1)=\frac{q^n+1}{q+1}:\frac{q^m+1}{q+1}\in \ZZ,$$  
and hence
$(q^m+1)/(q+1)>1$ is a $p$-power. Then 
$(n,q) \neq (3,2)$; so, by Zsigmondy's theorem \cite{Zs}, there is a prime $t$ such that $t|(q^{2n}-1)$ and $(t,q^i-1)=1$ for $1\leq i<2n$, in particular, $(t,q^{2m}-1)=1=(t,q^m+1)$ and $t|\frac{q^n+1}{q^m+1}$. So $t\neq p$, a contradiction.
\enp

\bl{22d} Let $A$ be a finite abelian group with a   proper subgroup B.
Let $\chi$ be a complex character of A constant on $A\setminus\{1\}$ with $\ker(\chi) \neq A$. Then $\chi=m\cdot \rho_A^{reg}+c\cdot 1_A$ for some integer $m>0$ and $c\in \ZZ$, and  $\chi|_B=\rho_B^{reg}+\chi_1$, where $\chi_1$ is a true character of B. \el

\bp  The trivial and the regular characters of $A$ lie in the subspace of the 2-dimensional space of class functions constant on $A\setminus 1$, so $\chi=m\cdot \rho_A^{reg}+c\cdot 1_A$, where $m>0$. Here $m,m+c\in\ZZ$ as $\chi$ is a character,
and $m>0$ as $\ker(\chi) \neq A$. Since $\rho_A^{reg}|_B=|A:B|\cdot \rho_B^{reg}$,
we have $\chi|_B=|A:B|\cdot m\cdot \rho_B^{reg}+c\cdot 1_B$. As $|A:B|\cdot m > m \geq -c$, the result follows.\enp

\bl{um8} 
 Let $G= \SL_n^\ep(q)$, $n>2$. Let p be a prime such that Sylow p-subgroups are cyclic, and let $\tau$ be a complex irreducible character of $G$. Suppose
$\deg\tau(g)<|g|$ for a p-element $g\in G$. Then $n$ is an odd prime, $|g|=(q^n-\ep 1)/(q- \ep 1)$, $\dim \tau=(q^n-   q)/(q-\ep 1)=|g|-1$,  and $\tau$ is a unipotent character of G. \el

\bp Let $P$ be a \syl of $G$ such that $g\in P$.  The assumption $\deg\tau(g) < |g|$ implies by \cite[Theorem 1.1 and Corollary 1.3(2)]{Z3} that 
$\tau(1)=(q^d-   q)/(q-\ep 1)$ (and $2 \nmid d$ if $\eps=-$). There is a single \ir character of this degree,  and  $\tau$ is unipotent (see for instance \cite[\S 3]{TZ96}), and moreover
$\tau|_P=\rho^{reg}_P-1_P$.  In particular, the statement follows if $P=\langle g \rangle$. If $P \neq   R:=\lan g\ran$, then  $\tau|_{R}
=\rho^{reg}_R+\chi$ for some proper character of $R$, and so $\deg \tau(g)=|g|$. \enp

\begin{theo}\label{hhs}   {\rm (Hall–Higman–Shult, cf.  \cite[Ch. IX, Theorem 3.2]{HB})} Let B be a finite group containing
a normal extraspecial p-subgroup $E_n$, $W=E_n/Z(E_n)$, and let $b \in B$ be such that $[b,Z(E_n)] = 1$. Suppose that $|b| = r^a$, where $r\neq p$ is a prime, and  $b^{r^{a-1}}$ does not centralize $E_n$. Let $\theta$ be a faithful irreducible representation of B  in characteristic $\ell\neq p$. Then either
\begin{enumerate}[\rm(i)]
\item the degree $d$ of the minimal polynomial of $\theta(b)$ equals $|b|$, or
\item $|b| = p^{n-t} + 1$, $|C_W(b)| = p^{2t}$ and b acts irreducibly on $W/C_W(b)$, in which case $d =|b| - 1$.
\end{enumerate}
\end{theo}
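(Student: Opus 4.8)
The plan is to compute $\deg\theta(b)$ as the number of distinct eigenvalues of the operator $\theta(b)$, which (using that $\ell\neq r$, so that $b$ is $\ell$-regular and $\theta(b)$ is diagonalizable with eigenvalues among the $|b|$-th roots of unity $\mu_{|b|}$) reduces the whole problem to an eigenvalue count inside the Weil representation of $\Sp(W)$. First I would run Clifford theory: since $\theta$ is faithful and $E_n$ is normal extraspecial, $Z(E_n)$ acts by a nontrivial scalar and $\theta|_{E_n}=e\cdot\chi$, where $\chi$ is the unique irreducible character of $E_n$ of degree $p^n$ with that central character. As $[b,Z(E_n)]=1$, the character $\chi$ is $b$-invariant, and since $F$ is algebraically closed and $\langle E_n,b\rangle/E_n$ is cyclic, $\chi$ extends to an irreducible $\hat M$ of $H:=\langle E_n,b\rangle$; by Gallagher's theorem $\theta|_H\cong \hat M\otimes U$ for a module $U$ inflated from $H/E_n$. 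Hence $\theta(b)=\hat M(b)\otimes U(b)$, and the eigenvalues of $\theta(b)$ form the product set $\bar A\cdot\bar D$, where $\bar A,\bar D\subseteq\mu_{|b|}$ are the eigenvalue sets of $\hat M(b)$ and $U(b)$.

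Next I would translate the action of $b$ on $\hat M$ into symplectic language. Conjugation by $b$ induces $\bar b\in\Sp(W)$ preserving the commutator form on $W=E_n/Z(E_n)$, and $\hat M$ is, up to a scalar, the restriction of the Weil representation, so that $|\operatorname{tr}\hat M(b^j)|=p^{\dim_{\FF_p}C_W(b^j)/2}$ for every $j$. Since $(|b|,p)=1$, $b$ acts completely reducibly on $W$, giving an orthogonal decomposition $W=C_W(b)\perp[W,b]$ with $\dim C_W(b)=2t$ and $\dim[W,b]=2m$, $m=n-t$; moreover $b$ centralizes the extraspecial factor $E_0$ corresponding to $C_W(b)$, because the induced automorphism of $E_0$ is inner of order dividing $\gcd(|b|,p^\bullet)=1$. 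Thus $\hat M(b)$ is scalar on the tensor factor $M_0$, and the number of distinct eigenvalues of $\hat M(b)$ equals that of $\hat M_1(b)$, the action on the $p^m$-dimensional module of the factor $E_1$ on which $b$ is fixed-point-free.

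The heart is the eigenvalue-multiplicity count for $\hat M_1(b)$, via $\operatorname{mult}(\alpha)=\tfrac1{|b|}\sum_j\alpha^{-j}\operatorname{tr}\hat M_1(b^j)$ together with $|\operatorname{tr}\hat M_1(b^j)|=p^{\dim_{\FF_p}C_{[W,b]}(b^j)/2}$. I would first treat the case where every nontrivial power of $b$ is fixed-point-free on $[W,b]$ (equivalently $\langle b\rangle$ acts freely), which by the prime-power hypothesis $|b|=r^a$ is the generic situation: then all these traces have absolute value $1$, so $\operatorname{mult}(\alpha)\ge (p^m-|b|+1)/|b|$, while a free symplectic element satisfies $|b|\le p^m+1$. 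If $|b|<p^m+1$ this lower bound is positive, every root of unity occurs and $d=|b|$ (case (i)); and exactly when $|b|=p^m+1$ the element generates a non-split (Coxeter) torus, hence acts irreducibly on $[W,b]=W/C_W(b)$, the bound degenerates so that each multiplicity lies in $\{0,1\}$, and the total $p^m=|b|-1$ forces precisely one missing root, giving $d=|b|-1$ (case (ii)) with $|b|=p^{n-t}+1$ and $|C_W(b)|=p^{2t}$. Finally, a missing root $\zeta$ of $\theta(b)$ forces $\zeta\bar D^{-1}\subseteq\mu_{|b|}\setminus\bar A$; since $\bar A$ omits at most one root, $U(b)$ must be scalar, so the missing root survives into $\theta(b)$ and no information about $\dim\theta$ is needed.

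The main obstacle I anticipate is the remaining case, where some nontrivial power $b^{r^k}$ fixes a nonzero subspace $F=C_{[W,b]}(b^{r^{a-1}})$ of $[W,b]$: here several trace absolute values exceed $1$ and the crude bound no longer yields $d=|b|$ directly. To close this I would invoke the explicit decomposition of the Weil character restricted to the cyclic group $\langle\bar b\rangle$ (the split versus non-split torus dichotomy), isolate $F$, and show — crucially using the prime-power structure of $|b|$, which confines the ``bad'' powers to the proper subgroup $\langle b^r\rangle$, so that all generators of $\langle b\rangle$ remain fixed-point-free — that the near-regular eigenvalue distribution leaves no root of unity uncovered, whence $d=|b|$. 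Controlling the Gauss-sum phases of $\operatorname{tr}\hat M_1(b^j)$ sharply enough to exclude any further missing root is the delicate point, and it is precisely where the hypothesis that $b$ has prime-power (rather than mixed) order is indispensable.
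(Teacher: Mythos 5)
The first thing to note is that the paper contains no proof of this statement: Theorem \ref{hhs} is quoted as a known result from Huppert--Blackburn \cite[Ch.~IX, Theorem 3.2]{HB}, and the citation is the entire ``proof''. So your attempt can only be measured against the theorem itself, and there it has genuine gaps.

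The decisive one is the case you defer in your last paragraph. That case is not a loose end to be ``closed'' later; it is the technical core of the Hall--Higman--Shult theorem. Once some nontrivial power of $b$ (necessarily in $\langle b^r\rangle$, as you correctly observe) has a nonzero fixed space on $[W,b]$, the crude estimate $\mathrm{mult}(\alpha)\geq \bigl(p^m-\sum_{j\neq 0}|\mathrm{tr}\,\hat M_1(b^j)|\bigr)/|b|$ can be negative, and ``controlling the Gauss-sum phases'' is not a workable repair: you give no mechanism for it, and none is needed in the classical argument. The standard resolution groups the orthogonality sum over the subgroup $\langle b^r\rangle$: writing $N=|b|$ and $T_j=\mathrm{tr}\,\hat M_1(b^j)$, one has $\sum_{r\mid j}\alpha^{-j}T_j=\tfrac{N}{r}\,\mathrm{mult}_{b^r}(\alpha^r)\geq 0$, i.e.\ the contribution of all the ``bad'' powers is automatically a nonnegative multiple of an eigenvalue multiplicity of $b^r$, while the remaining $N-N/r$ terms have absolute value $1$ by the fixed-point-freeness you established. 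This yields $\mathrm{mult}_b(\alpha)\geq \tfrac{1}{r}\bigl[\mathrm{mult}_{b^r}(\alpha^r)-(r-1)\bigr]$, and one then needs an induction on $a$ producing a \emph{quantitative} lower bound (not mere positivity) for the multiplicities of $b^r$. Without this, or an equivalent device, your argument proves nothing in the non-generic case, so the dichotomy (i)/(ii) is not established.

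There is a second gap of scope: your very first step assumes $\ell\neq r$ so that $\theta(b)$ is diagonalizable and $d$ equals the number of distinct eigenvalues. But the theorem only assumes $\ell\neq p$; the case $\ell=r$ is allowed, and it is precisely the original Hall--Higman situation, where $\theta(b)$ is unipotent, its sole eigenvalue is $1$, and $d$ is the largest Jordan block size. Eigenvalue counting gives no information there; one must instead show that $\theta|_{\langle b\rangle}$ contains a free $F\langle b\rangle$-summand (or falls into the exceptional configuration), which your framework cannot deliver without being rebuilt. A smaller, repairable point: $\theta|_{E_n}=e\cdot\chi$ presumes the central character of $Z(E_n)$ is $B$-invariant, which is not assumed; you should instead decompose $\theta|_{E_n\langle b\rangle}$ by central characters, using only that $b$ centralizes $Z(E_n)$. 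What you do carry out --- the generic case where $\langle b\rangle$ acts freely on $[W,b]$, including the boundary case $|b|=p^{n-t}+1$ forcing irreducibility and exactly one missing root, and the tensor-factor argument showing the missing root survives into $\theta(b)$ only when $U(b)$ is scalar --- is sound and is indeed in the spirit of the classical trace-estimate proof, but it is not the whole theorem.
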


\section{Cyclic Sylow $p$-subgroups}

Throughout this section,  $ \GG$ is a connected reductive algebraic group  in characteristic $r>0$, $\F$ a Steinberg endomorphism of $\GG$ and $  G=\GG^\F$. 
The  purpose of this section is to prove for certain finite groups $G=\GG^F$ of Lie type in characteristic $r > 0$ that if $\phi\in\Irr_\ell(G)$, $\dim\phi>1$, $\ell \neq r$,
and $Y$ is a cyclic \syl of $G$ with $p \neq \ell$, such that all elements of $Y\setminus \{1\}$ are regular semisimple, then  $\deg\phi(g)\geq |g|-1$. 
Recall a semisimple element of $G=\mathbf{G}^\F$ is {\it regular} if it is regular in $\mathbf{G}$, equivalently, that
 its centralizer in $\GG$ contains no non-identity unipotent element. By a maximal torus in $\GG^F$ we mean the subgroup
 $\TC^{\F}$ for an $\F$-stable maximal torus $\TC$ of $\GG$. Furthermore, in this and the next sections, where we discuss 
 $G = \GL^\eps_n(q)$, then we also choose $\GG = \GL_n$ with a Steinberg endomorphism $\F:\GG\to\GG$ such that $G = \GG^{\F}$. 

We begin with some general facts.

\bl{ct1}  Let $G=\GL^\eps_n(q)$ and $H=[G,G] = \SL^\eps_n(q)$.  For $\phi\in\Irr_\ell(G)$, let $\phi'$ be an \ir constituent of $\phi|_{H}$ and let $g\in H$ be a semisimple element.  Then $\deg\phi(g)=\deg\phi'(g)$. 
\el

\bp   
We may assume $g\in T \cap H$ for some maximal torus $T$ of $G$. By Clifford's theorem,  $\phi|_{H}$  is a direct sum of  $(\phi')^h$, where $\phi'$ is an \ir constituent of $\phi|_{H}$ and $h$ ranges over representatives 
of the cosets in $G/H$. As $G=HT$, the  representatives can be chosen in $T$. Therefore, $(\phi')^h|_{T\cap H}=\phi'|_{T\cap H}$, and hence $\phi|_{T\cap H}$ is a multiple of $\phi'|_{T\cap H}$. Whence the result. \enp

\bl{tr1} Let G be a finite group of Lie type, $T$  a maximal torus of $G$, and let $ Y\subset  T$ be a subgroup, such that every element  of prime order in Y is regular. Then $Y$ is cyclic. \el

\bp It suffices to prove the lemma for $Y$ a (nontrivial abelian) $p$-group for some prime $p$. Suppose the contrary; then there are  elements $x,y\in Y$ of order $p$ such that   the subgroup $\lan x,y\ran$ is not cyclic.  We can view  $x,y$ as elements of a maximal torus ${\mathbf T}$ containing $T$. 
Let $\al$ be a ${\mathbf T}$-root of ${\mathbf G}$. As $x,y$ are regular, we have
$\alpha(x) \neq 1$ and $\alpha(y) \neq 1.$ As $|x|=|y|=p$, there is some $1 \leq i < p$ such that $\alpha(x^i\cdot y) = 1$, and so the element $x^i\cdot y\in Y$ is not regular. \enp

\bl{t45} Let $\mathbf{G}$ be a connected reductive   
algebraic group in characteristic r and $Y$ a finite p-subgroup for some prime $p\neq r$. Suppose that $\mathbf{G}'$ is simply connected and  every element of order p in Y is regular. Then Y is cyclic.  \el

\bp We may assume $Y$, and hence $Z(Y)$, is nontrivial. Let $y\in Z(Y)$ be of order $p$ and let $\mathbf{T}$ be a maximal torus of $\mathbf{G}$
containing $y$. As $y$ is regular semisimple and $\mathbf{G}'$ is simply connected, $C_{\mathbf{G}}(y)=\mathbf{T}$, see \cite[Theorem 3.5.6]{C}. 
So $Y \subset \mathbf{T}$, and hence $Y$ is abelian. The lemma
now follows from Lemma \ref{tr1}.\enp

\bl{bb1} Let $T$ be a maximal torus of a finite reductive group $G=\mathbf{G}^\F$, and let $t_1,t_2\in T$. Suppose that    $C_{\GG}(t_1)$ is connected.
If $t_1,t_2$ are conjugate in G then they are conjugate in $N_G(T)$.\el

\bp Let $\mathbf{T}$ be a maximal torus of $\mathbf{G}$ containing $T$.
By \cite[Ch. II, \S 3.1]{SS},  $nt_1n\up=t_2$ for some $n\in N_{\mathbf{G}}(\mathbf{T})$. Then $\Fr(n)t_1\Fr (n\up)=t_2$, whence $n\up \Fr (n)t_1n\Fr (n\up)=t_1$,
that is, $n\up \Fr (n)\in C_{\mathbf{G}}(t_1) $. By the Lang theorem, $n\up \Fr (n)=t\up \Fr (t)$ for some $t\in C_{\mathbf{G}}(t_1)$. So $tn\up=\Fr (t)\Fr (n\up)=\Fr (tn\up)$, so $tn\up\in G$ and $x:=nt\up\in G$. Clearly, $xt_1x\up=t_2$ and $x\in
N_{\mathbf{G}}(\mathbf{T})\cap G=N_{G}(\mathbf{T})$. As $T=\mathbf{T}\cap G$, we have $xTx\up=T$, as required.\enp

\begin{remar} 
{\em Lemma \ref{bb1} is a corrected version of \cite[Lemma 3.1]{TZ20}. The proof of   Lemma 3.1 in \cite{TZ20} requires $C_{\mathbf{G}}(h)$ to be connected.
This is missed in the statement of \cite[Lemma 3.1]{TZ20}. However, 
\cite[Lemma 3.1]{TZ20} is used only in the proof of \cite[Lemma 3.2]{TZ20}, where this property is shown to hold. So all other results of \cite{TZ20} are not affected by the omission.}\end{remar}

\med
Let $G^*=(\mathbf{G^*})^{\F^*}$ be the group dual to $G=\mathbf{G}^\F$, see \cite[\S 4.3]{C} and \cite[\S 13]{DM}. The  
 character theory of groups of Lie type establishes a  correspondence $\mathbf{T}\ra \mathbf{T^*}$ between $G$-classes of $\F$-stable maximal tori in $\mathbf{G}$ and $G^*$-classes of $\F^*$-stable maximal tori in $\mathbf{G^*}$, and $|\mathbf{T}^\F|=|(\mathbf{T^*})^{\F^*}|$ \cite[Propositions 4.3.4 and 4.4.1]{C}.
Furthermore, in the case $G=\GG^\F = \GL^\eps_n(q)$ (with $\GG=\GL_n$) we have $G^* \cong G$, and every semisimple element in $G$ has connected
centralizer in $\GG$ (see e.g. \cite[Theorem 3.5.6]{C}).

For  a semisimple $\ell'$-element   $s\in G^*$, denote by $\mathcal{E}_{\ell,s}$ the union of the rational series $ \mathcal{E}_{ys}= \mathcal{E}(G,ys)$, where  $y\in G^*$, $ys=sy$ and $|y|$ is an $\ell$-power. 
For every  $\phi\in \Irr_{\ell} (G)$  there exists a semisimple $\ell'$-element $s\in G^*$ such that $\phi$
is a constituent of $\chi\pmod \ell$ for some $\chi\in\mathcal{E}_{\ell,s}$. Therefore, it is meaningful to write $\phi\in\mathcal{E}_{\ell,s}$ in such a case. In fact, $\chi$ can be chosen in $\mathcal{E}_{s}$ \cite[Theorem 3.1]{H4}. In particular, 
if $\phi\in\mathcal{E}_{\ell,1}$ then $\phi$ is a   constituent of $\chi\pmod \ell$ for some unipotent character $\chi$, in which case $\phi$ 
is called {\it unipotent}.
 
\bl{dd2} Let $\phi\in\mathcal{E}_{\ell,s}$ be a Brauer character. Then $\be$ in an integral linear combination of ordinary characters of $ \mathcal{E}_{\ell,s}$ restricted to $\ell'$-elements.  In addition, $\phi(1)$ is a multiple of $|G^*:C_{G^*}(s)|_{r'}$.
\el

\bp By a theorem by Brou\'e and Michel \cite{BM}, $\mathcal{E}_{\ell,s}$ is a union of $\ell$-blocks. So $\phi$ lies in one of those blocks. Then $\phi$ is a $\ZZ$-linear combination of the ordinary \ir characters from the block it belongs to. For the additional statement see
\cite[Proposition 1]{HM}. \enp

\bl{gh6}  \cite[Theorem 5.1]{GeH} Let $G = \mathbf{G}^\F$ be a group of Lie type with $\mathbf{G}$ having connected center, and let
$\ell$ be a good prime for $\mathbf{G}$.  
Let $s\in G^*$ be a semisimple $\ell'$-element and $\be\in \mathcal{E}_{\ell,s}$ be a Brauer character. Then $\be$ is an integral linear combination of the ordinary  
characters from $\mathcal{E}_{s}$ restricted to $\ell'$-elements. In addition, the number of    \ir Brauer characters in $\mathcal{E}_{\ell,s}$ equals $|\mathcal{E}_{s}|$.\el

\bl{t45a} Let $\mathbf{G}$ be a  connected reductive  algebraic group in characteristic $r>0$, $\F$ a Steinberg  endomorphism of $\mathbf{G}$ and $G=\mathbf{G}^\F$. 
Suppose that $\mathbf{G}'$ is simply connected and every element of order $p\neq r$ in G is regular. Then every unipotent character of G  of non-zero p-defect lies  in the principal $p$-block.  \el

\bp  By Lemma \ref{t45}, a \syl of $G$ is cyclic, and hence is contained in a maximal torus $T=\mathbf{T}^\F$, say, of $G$ \cite[Corollary 3.16]{DM}.  Let $y\in T$ be of order $p$. By hypothesis, $y$ is regular and $\mathbf{G}'$ is simply connected, so $C_{\mathbf{G}}(y)=\mathbf{T}$, and  hence   $C_G(y)= T$. Now if $g \in C_G(y)$ is $p$-singular, then $g \in T$ and, if $h$ is a power of $g$ of order $p$, then $h$ is regular, whence $\mathbf{T} \subset  C_{\mathbf G}(g) \subset  C_{\mathbf G}(h) = \mathbf{T}$ and thus $g$ is regular. We have shown that every $p$-singular element of $G$ is conjugate to a regular element in $T$.

Let $\chi$ be a unipotent character of $G$ of non-zero $p$-defect.  By \cite[Remark 2.3.10]{GM}, $\chi$  is constant on the regular elements of $T$, and hence on the $p$-singular elements of $G$. Then the result follows by \cite[Theorem 1.3(1)]{PZ}. \enp
 
\def\stc{Suppose the contrary.} 
\def\sft{Suppose first that }

\bl{nm6} Let $\GG$ be a connective reductive algebraic group with F  endomorphism $\F$,  $G=\GG^\F$, ${\mathbf T}$ is an $\F$-stable maximal torus of $\GG$.  Suppose that  $\GG'$ is simply connected, $T=\TC^\F$ has an element of prime order $p$, and that every element of order $p$ in T is regular. Then T contains a \syl of G, which is cyclic, and every p-singular element of $G$ is regular.\el

\bp Let $Y$ be a \syl of $T$. By Lemma \ref{tr1}, $Y$ is cyclic. 
Let $Y_1$ be a \syl of $G$ containing $Y$, and assume that $Y_1\neq Y$. Then $N_{Y_1}(Y)\neq Y$, so let $h\in N_{Y_1}(Y) \smallsetminus Y$, whence $h\notin T$.  Let $y\in Y$ be of order $p$. As $Y$ is cyclic, the $p$-element $h$ normalizes the subgroup $\lan y \ran$ of order $p$,  and so we have $[h,y]=1$. 
Thus $h\in C_{\GG}(y)$.   As $y$ is regular and $\GG'$ is simply connected, 
$C_{\GG}(y)={\mathbf T}$. It follows that $h\in C_G(y)=C_{\GG}(y)^\F={\mathbf T}^F=T $, a contradiction. 

We have shown that $Y \in \Syl_p(G)$. Suppose now that $g \in G$ is $p$-singular. Then some power $h$ of $g$ has order $p$, and without loss we may assume
$h \in T$. By assumption $h$ is regular, so again we have $g \leq C_\GG(h)= \TC$. But in this case $\TC \leq C_\GG(g) \leq C_\GG(h)=\TC$, so 
$C_\GG(g)=\TC$ and $g$ is regular.
\enp

Recall that for every $\F$-stable maximal torus $\mathbf{S}$ of $\GG$ there exists an $\F$-stable maximal torus 
$\mathbf{S^*}$ of $\GG^*$ in duality with $\mathbf{S}$, such that $|S|=|S^*|$, where $S^*:=(\mathbf{G}^*)^{\F^*}$, see \cite[Proposition 4.4.1 and Corollary 4.4.2]{C}.

\bl{nn1} Let $\GG$ be a connected reductive algebraic group  in characteristic $r>0$ with 
$\GG'$ being  
simply connected.  Let $\F$ be a Steinberg  endomorphism of $\GG$,  $G=\GG^\F$ and 
let $\chi\in\mathcal{E}_{s}$ for a semisimple element $s\in G^*$.
Let $\SB^*$ be an $\F^*$-stable maximal torus of $\GG^*$ and $S^*=({\mathbf S}^*)^{\F^*}$. Let $\mathbf{S}$ be in duality with $\mathbf{S^*}$, and let 
$Y \neq \{1\}$ be a  Hall $\pi$-subgroup of $S:=\mathbf{S}^\F$ for some set $\pi$ of primes 
such that every element of $Y\setminus \{1\}$ is regular.
Then the following statements hold.

\begin{enumerate}[\rm(i)]
\item \st  $s$ is not conjugate to an element of $S^*$. Then $\chi(y)=0$
for all $1\neq y\in Y$. 

\item \st  $s\in S^*$ and   $(|s|,|Y|)=1$. 
Then $\chi$ is constant on the elements of $Y \setminus \{1\}$.  

\item Suppose that $s=1$, so that $\chi$ is unipotent.  Then $\chi$ is constant on the regular elements $h$ of any fixed maximal torus of $G$.  

\item Assume in addition that $G \cong G^*$ and $Z(\GG)$ is connected. If $(|s|,|Y|)>1$ and $\ell \nmid |s|$,  
then all ordinary characters $\psi \in \EC_{\ell,s}$ have the same degree.
\end{enumerate}
\el
  
\bp (i) For a function $f$ on $G$ denote by $f^{\#}$ the restriction of $f$ to the set of semisimple elements of $G$. 
Then $\chi^{\#}$ is a $\QQ$-linear combination of $R^{\#}_{T_i,\theta_i}$,
where  $R_{T_i,\theta_i}$ are Deligne-Lusztig characters in $\mathcal{E}_{s}$,  $T_i$ is a maximal torus of $G$ and $\theta_i$ is a linear character of $T_i$ 
(see for instance \cite[Lemma 4.1]{TZ8}). In addition,   $|\theta_i|=|s|$, where $|\theta_i|$
means the order of $\theta_i$ in the group of linear characters of $T_i$,  see for instance \cite{H4}. The  values $R_{T_i,\theta_i}(h)$ 
at the semisimple elements $h\in G$ are given by the formula
$$R_{T_i,\theta_i}(h)=\ep(\mathbf{T}_i)\ep(\GG)\theta_{i}^G(h)/\StC(h),$$
where $\StC$ is the Steinberg character of $G$ and $\ep(\mathbf{T}_i),\ep(\GG)\in\{\pm 1\}$, see for instance \cite[7.5.4]{C}.
(In particular,  $R_{T_i,\theta_i}(h)=0$ if $h$ is not conjugate to  an element of $ T_i$.)

Now let $h$ be a $p$-singular element of $S$ for some $p \in \pi$. Then $h$ is regular. %
It is well known that any regular semisimple element
of $G$ lies in a unique maximal torus, so either $T_i$ is conjugate to $ S$ or
$R_{T_i,\theta_i}(h)=0$.  Therefore,  we conclude that
either $\chi(h)=0$ for all  $p$-singular elements $h \in S$    
 or $ \chi(h)=\sum_i a_iR_{S,\theta_i}(h)$, with some nonzero coefficients  $a_i$.  \itf if $ \chi(h)\neq 0$ then $s$ is conjugate in $G^*$ to an element of $S^*$.

Furthermore, $\StC (h)=\ep(\GG)\ep(\SB)$ as $h$
is regular and hence 
$C_\GG(h)=\SB$.
Then  $R_{S,\theta_i}(h)=\theta_{i}^G(h)$, and hence 
\begin{equation}\label{eq220}
  \chi(h)=\sum_i a_i\theta_{i}^G(h).
\end{equation}   By Lemma \ref{bb1}, if $h,h'\in S$ are 
 conjugate in $G$  then $h,h'$ are conjugate in $N:=N_G(S)$. 
  It follows that \begin{equation}\label{eq221}
  \theta_{i}^G(h)=\theta^{N}_{i}(h)\,\,\,\,{\rm and }\,\,\,\,\chi(h)=\sum_i a_i\theta_{i}^N(h).\end{equation}  
for every $p$-singular element $h\in S$.  

\smallskip
(ii) Let $1 \neq y \in Y$  and consider the characters $\theta_i$ of $S$   
 in \eqref{eq220}. As $(|s|,|Y|)=1$ and so $|\theta_i|=|s|$ is coprime to $|y|$, we have 
$\theta_{i}(y)=1$. In fact, if $n \in N_G(S)$ then $|nyn^{-1}| = |y|$ and hence $\theta_i(nyn^{-1})=1$ as well. Now using \eqref{eq221} we have
$$\theta_i^G(y) = \theta_i^N(y)=|N_G(S)/S|.$$ 
This implies by \eqref{eq220} that $\chi$  constant on $Y\setminus \{1\}$.

\smallskip
(iii)   See   \cite[Remark 2.3.10]{GM} where it is observed that $\chi(h)=(\chi,R_{T,1})$.  

\smallskip
(iv)  
By Lemma \ref{nm6}, the elements $g\in G$ with $(|g|,|Y|)>1$ are regular, and hence $r \nmid |C_G(g)|$ for all such $g$.
Since $G \cong G^*$ and $(|s|,|Y|) > 1$, it follows that $r \nmid |C_{G^*}(s)|$, and so $s$ is regular. The same holds for $xs$, for every $\ell$-element $x\in G^*$ with $xs=sx$.   
By definition,  $ \EC_{\ell, s}$ is the union of $\EC_{xs}$ for $\ell$-elements $x \in C:=C_{G^*}(s)$,  so $\psi$ lies in  $\EC_{xs}$ for some $\ell$-element $x$ with $xs=sx$. Since $Z(\GG)$ is connected, $C_{\GG^*}(s)$ is connected   \cite[Theorem 4.5.9]{C}, so $C_{\GG^*}(s)$ is a maximal torus; similarly, 
$xs$ has connected centralizer in $\GG^*$ which is the same as the centralizer of $s$. Hence  $\mathcal{E}_{xs}$ consists of a single character $\chi_{xs}$ \cite[Theorem 13.23]{DM}, which has degree $[G^*:C]_{r'}$, see \cite[13.24]{DM}.
\enp

\bl{kk8} Let G be a finite group, $\ell$ a prime divisor of $|G|$ and let $\be$ be an \ir Brauer character of G in a block B of non-zero defect. Then 
\begin{enumerate}[\rm(i)]
\item $\be$ is an
integral  linear combination of ordinary \ir characters  of B  (restricted to the set of $\ell'$-elements of G);  
\item if $\be= \sum a_j\chi^\circ_j$, where $\chi_j$ are distinct \ir characters of $G$, $0\neq a_j\in \CC$  and $\chi^\circ_j$ stands for the restriction of $\chi_j$ to the set of $\ell'$-elements of G, then none of $\chi_j$'s is of defect $0$.  
\end{enumerate}
\el

\bp The first statement is well known, see \cite[Lemma 3.16]{N}. The additional statement is known too, but we provide a proof for the reader's convenience.  

Set $\be_1= \sum a_j\chi_j$. Suppose the contrary, let $\chi_1$, say, is of defect 0. Then  $\chi_1$ is a unique character in the block it belong to.  Therefore, $(\chi_j,\chi_1)=0$ for $j>1$, and    $(\be_1,\chi_1)=0$ by (i). On the other hand,  $(\be_1,\chi_1)=a_1$, a contradiction.    \enp

\bl{rm1} Let $\GG$ be a connected  reductive algebraic group  in characteristic $r$ 
such that $\GG'$   is simply connected. Let $G=\GG^\F$,  and let $Y \neq \{1\}$ be an abelian Hall $\pi$-subgroup of $G$, for a set $\pi$ of primes different from $r$ 
such that every element of $Y\setminus \{1\}$ is regular. 
Let   $\phi\in\mathcal{E}_{\ell,  s}$ be an irreducible $\ell$-Brauer character, where $(|s|,\ell)=1$ and $(\ell,r|Y|)=1$. Then the \f holds. 
\begin{enumerate}[\rm(i)]
\item \st $(|C_{G^*}(s)|,|Y|)=1$.   Then $\phi(y)=0$ for every $1\neq y\in Y$. 
\item \st   $(|s|,|Y|)=1$. Then $\phi$ is constant on $Y \setminus \{1\}$.  
\item Suppose that $G \cong G^*$, $Z(\GG)$ is connected, and $(|s|,|Y|)\neq 1$. Then $\phi$ lifts to characteristic $0$.
\end{enumerate}
\el

\bp   Note that $Y$ is contained in a maximal torus $S = \SC^\F$ of $G$. Indeed, $C_G(y)=S$  for $1\neq y\in Y$  as $\GG'$ is simply connected. 
Let $S^*$ be the maximal torus of $G^*$ dual to $S$. 

\smallskip
Recall that $\mathcal{E}_{\ell,s}$ is a union of $\ell$-blocks, so, by Lemma \ref{kk8}, $\phi$ is a $\ZZ$-linear combination (restricted  to 
$\ell'$-elements) of \ir characters $\chi\in \mathcal{E}_{\ell,s}$. Each such $\chi$ belongs to $\EC_{xs}$ for some $\ell$-element $x \in C_{G^*}(s)$.
If $\chi$ is not completely vanishing on $Y \setminus \{1\}$, then $xs$ is conjugate to an element in $S^*$ by Lemma \ref{nn1}(i), whence we may assume that 
the $\ell'$-part $s$ of $xs$ belongs to $S^*$, and so $C_{G^*}(s) \geq S^*$ has order divisible by $|S^*|=|S|$, which is divisible by $|Y|$. Thus (i) follows.
Next, if $(|s|,|Y|)=1$, then $|xs|$ is coprime to $|Y|$, whence $\chi$ is constant on $Y\setminus \{1\}$
by Lemma  \ref{nn1}(i), (ii). This implies (ii).

\smallskip
(iii)  As mentioned in the proof of Lemma \ref{nn1}(iv),   $xs$ is regular in $G^*$ for every $\ell$-element $x\in C_{G^*}(s)$.  By Lemma  \ref{nn1}(iv), all ordinary characters in  $\mathcal{E}_{\ell,s}$ are of the same degree $d:=|G^*:C_{G^*}(s)|_{r'}$. Using Lemma \ref{kk8} we conclude that $d|\phi(1)$. In addition, $\phi$ is a constituent of $\chi\pmod\ell$ for some $\chi\in \mathcal{E}_{\ell,s}$, see \cite[Proposition 1]{HM}. Therefore, $\phi(1)=d$, and the claim follows.\enp

The \f lemma will be used only for $G=E_8(q)$.

\bl{20tz} Under assumptions of Lemma {\rm \ref{nn1}(iv)}, suppose that every element  $1\neq y\in S $ is regular.  If $\ell$ divides $|S|$, assume in addition that $G$ is simple. Let $\phi\in\Irr_\ell(G)$ be nontrivial with Brauer character $\be$,
and suppose that  $\deg\phi(g)<|g|$ for some $g\in S$ with $(|g|,\ell)=1$. Then either $\be$ is liftable to characteristic $0$, or $|g|=|S|$, $\be$ is in a unipotent block,  and $\deg\phi(g)= |g|-1$. \el

\bp (a) Since $\GG'$ is simply connected, $C_\GG(y) = \SC$ and hence $C_G(y)=S$ whenever $1\neq y\in S$.  Let $s\in G^*$ be a semisimple $\ell'$-element such that $\phi\in{\mathcal E}_{\ell,s}$, and recall $\SC^*$ is dual to $\SC$.

By assumption, $g \in S$ is a nontrivial $\ell'$-element.
Hence the Hall $\ell'$-subgroup $Y$ of $S$ contains $g$ and so is nontrivial. Taking $\pi$ to be the set of prime divisors of $|Y|$ (so that
$Y$ is the Hall $\pi$-subgroup of $S$), we observe that $Y$ is a Hall $\pi$-subgroup of $G$.
Indeed, otherwise there exist a prime $p \in \pi$ such that the Sylow $p$-subgroup $Y_p$ of $Y$ is not a \syl of $G$.
Then $N_G(Y_p)\setminus Y_p$ contains a  $p$-element $h$, say. As $h$ acts on $Y_p \neq \{1\}$, we have  $C_{Y_p}(h)\neq \{1\}$; let $1\neq y\in Y_p$ be such that  $h\in C_{G}(y)$.  As $y$ is regular, we have $C_{G}(y)=S$ by the above.  
Hence $h\in S$ and so $h \in Y$, a contradiction.  

Next suppose that $(|C_{G^*}(s)|,|Y|)=1$. Then $\be$ vanishes on $Y \setminus \{1\}$ by Lemma \ref{rm1}(i), and hence $\deg \phi(g)= |g|$, contrary to the 
assumption. We have shown that
$$(|C_{G^*}(s)|,|Y|) >1.$$
Thus, there is $h\in C_{G^*}(s)$ of prime order $p$ for some 
$p \in \pi$.  Recall that $|G|=|G^*|$, $|S|=|S^*|$, and $Y$ is a Hall $\pi$-subgroup of $G$. It follows that a \syl of $S^*$ is a \syl of $G^*$. Therefore, $xhx\up\in S^*$ for some
$x\in G^*. $  Since we assume $G \cong G^*$,  $xhx^{-1}$ is regular (see the proof of Lemma \ref{nn1}(iv)).    But $[s,h]=1$, so $xsx\up\in C_{\GG^*}(xhx\up) =\mathbf{S^*}$,
and hence $xsx\up\in S^*$. In what follows we may therefore assume that $s \in S^*$. 

Since $s$ is an $\ell'$-element and $Y$ is a Hall $\ell'$-subgroup of $G$ with $|G^*|=|G|$, we see that either
$(|s|,|Y|) > 1$, or $s=1$. In the former case, $\be$ lifts to characteristic $0$ by Lemma \ref{rm1}(iii). 
So we  may assume that $s=1$. Then 
$\be$ is constant on $Y\setminus \{1\}$ by Lemma \ref{rm1}(ii). Therefore,  $\deg\phi(g)=|g|-1$ and $|g|=|Y|$ by Lemma \ref{22d}.
As $s=1$, $\be$ is in a unipotent block, and we are done if $\ell \nmid |S|$.

\smallskip
(b) Suppose that  $\ell$ divides $|S|$, and $s=1$. We now show that this contradicts the assumption $\deg\phi(g)<|g|$. 

By Lemma \ref{nm6} applied to the prime $p=\ell$, 
Sylow $\ell$-subgroups of $G$ are cyclic and all $\ell$-singular elements of $G$ are regular. So $\ell$ is a good prime for $G$. By \edit{Lemma \ref{gh6}},  $\be$ is an integral linear combination of ordinary unipotent characters $\chi$ restricted to the $\ell'$-elements; write
$$\be=\sum a_j\chi^\circ_j$$ 
for some distinct unipotent characters $\chi_j$ and integers $a_j\neq 0$. By Lemma \ref{kk8}, none of them is of $\ell$-defect 0.  Therefore, each $\chi_j$ belongs to the principal $\ell$-block of $G$ by Lemma \ref{t45a}, and $\phi$ belongs to the principal block.  

Note that   $N:=N_G(O_\ell(S))=N_G(S)$. 
(Indeed, $N_G(S)\leq N_G(O_\ell(S))$. Conversely, any $x\in N_G(O_\ell(S))$ normalizes  $C_{G}(O_\ell(S))=S$,  as elements of $S\setminus \{1\}$ are regular.) 

Let $F$ be an \acf of characteristic $\ell$ and $V$   an $FG$-module with character $\be$. 
Then $V$ is in the principal $\ell$-block (as so is $\be$). 
Furthermore,  $V|_N=V_1\oplus V_2$ for some  $FN$-module $V_2$, where $V_1$ is the Green correspondent of $V$; in addition,  $V_1$ is indecomposable and lies in the principal $\ell$-block of $N$, see  \cite[Proposition 64.42(ii)]{CR}.
By \cite[Theorem 6.10]{N}, $O_{\ell'}(N)$ lies in the kernel of $V_1$. As $Y\subseteq O_{\ell'}(N)$, we conclude that  $V_1|_{Y }$ is trivial.
So $1_Y$ is a constituent of  $V|_{Y}$. 

By Lemma \ref{rm1}(ii), $\be$ is constant on $Y\setminus \{1\}$.  
As noted above, $C_G(y)=S$ for all $1 \neq y \in S$.
Observe that $N\neq S$ as $G$ is simple (see   for instance     \cite[Theorem 39.1]{Asch}). 
 \itf $N$ fixes no non-identity element of $Y$, 
  and hence no non-trivial \ir character of $Y$. Note that the action of $N$ on 
  characters $\tau$ of $Y$ is defined by sending $y$ to $\tau(nyn\up)$.  We denote this character by $\tau^n$. As $\be^n=\be$, we have $(\be,\tau)=(\be^n,\tau^n)=(\be,\tau^n)$, so an \ir character $\tau$ of $Y$ occurs in $\be|_Y$ with the same \mult as that of $\tau^n$. As $Y=\lan g\ran$, two \ir characters $\tau,\tau'$ coincide \ii $\tau(g)=\tau'(g)$.  
 So $\deg\phi(g)= |g|-1$ implies that $\be|_Y$ has exactly $|g|-1$ distinct \ir constituents. 
Let $\tau_0$ be the \ir character of $Y$ that does not occur as an \ir constituent of $\be|_Y$. If $\tau_0\neq 1_Y$ then $\tau_0^n\neq \tau$ for some $n\in N$ and $\tau_0^n$
is not a constituent of $\be|_Y$  too, which  is a contradiction. So $\tau_0=1_Y$,
so  $\deg\phi(g)= |g|-1$ implies that 1 is not an \ei of $g$ on $V$. But this is impossible since $V_1$ is trivial on $Y$.
\enp

If $\be$ is liftable then the cases where $\deg\phi(g)<|g|$ are listed in \cite{Z3};
in particular we always have $\deg\phi(g)\geq |g|-1$.
 
\section{Linear and unitary groups} 

In this section we deal with groups $\GL_n^\ep(q)$ and $\SL_n^\ep(q)$, where $\ep=+$
or $-$, and $q$ is a power of a prime $r$. We start with the case where $n=d>2$ is a prime and then aim to reduce the general case to this one (when possible).
For $X$ a finite abelian group and $p$ a prime, we will also write $X_p$ instead of $O_p(X)$, and $X_{p'}$ instead of $O_{p'}(X)$, for brevity.

\subsection{The case of $n=d$ an  odd prime}
  
In this section $d>2$ is a  prime, $G= \GL_d^\ep(q)$ and $S$ is a maximal torus of $G$
of order  $q^d-\ep 1 $. Note that $S$ is \ir on the natural module $V$ for $G$. 

\bl{us1} Let $G= \GL_d^\ep(q)$ and $g\in S\setminus   Z(G)$.  
Then  $g$ is \ir on the natural module  of $G$ and hence regular in $\GL_d(\overline{\FF}_q)$.\el

\bp 
Let $K=\FF_q$  if  $\ep=+$ and  $\FF_{q^2}$ if  $\ep=-$. Let $E$ be the enveloping algebra of $S$ over $K$.
Then $E$ is a field by Schur's lemma, $|E:K|=d$ and $g\in E$.  Then   $\lan K,g\ran$
  is a subfield of $E$, and $\lan K,g\ran\neq K$ as $g$ is not scalar. As $d$ is a prime and 
$g\notin Z(G)$, 
we have $\lan K,g\ran=E$, and hence $g$ is \ir on $V$.
\itf  $g$ has $d$ pairwise distinct eigenvalues over $\overline{\FF}_q$, and hence 
$g$ is regular. \enp

\bl{co8} Let $G=\GL_d^\ep(q)$.  If $\chi$ is a unipotent ordinary character of $G$ then $Z(G)$ is in the kernel of $\chi$ and  $\chi$ is constant on $S\setminus Z(G)$,  where $S$ is a maximal torus of order $q^d - \ep $.\el 

\bp This follows from Lemma \ref{us1} and Lemma \ref{nn1}(iii). 
\enp
 
\bl{ue1} Let $G_1= \SL_d^\ep(q)$, and let $S_1$ be a maximal torus  of order $(q^d-\ep)/(q-\ep)$. Then every d-element from $S_1$  is in $Z(G)$. Consequently,  $S_1=Z(G_1) \times Y$, where $Y$ is a Hall $d'$-subgroup of $S_1$, and also of $G_1$. In addition, $Y$ is a Hall subgroup of $G:=\GL^\ep_d(q)$ and every non-identity element of Y is regular.\el

\bp Let $g\in S_1$ be a $d$-element and $g\notin Z(G_1)$ so $g$ is not scalar. Then $d$ divides $q^d-\ep 1$. By Lemma \ref{us1},
$g$ is \ir on the natural module $V$ for $G_1$ and $G$. 
If $d \mid (q-\ep 1)$ then, by \cite[Lemma 3.2]{Z3}, $G_1$ contains no \ir $d$-element.   As $|Z(G_1)|\in \{d, 1\}$, we have $S_1=Z(G_1)\times Y$, where $Y\cong S_1/Z(G_1)$ is a $d'$-group.  

We claim that $(|Z(G)|,|Y|)=(q-\ep 1,|Y|)=1$. Indeed, we have
  \begin{equation}\label{quot1}
 |S_1|=(q^d-\ep 1)/(q-\ep 1)= q^{d-1}+(\ep q)^{d-1}+\cdots +\ep q+1\equiv d\pmod{q-\ep 1}.
\end{equation}  
So  $(q-\ep 1,|Y|)\in\{1,d\}$,
a contradiction. 

By Lemma \ref{nm6},  for every prime divisor $p$ of $|Y|$,   Sylow $p$-subgroups of $Y$ are   Sylow $p$-subgroups of $G_1$.    This implies that $Y$ is a Hall $d'$-subgroup of $G_1$. Moreover,   $Y$ is Hall in $G$  as $|G/(G_1\cdot Z(G))|=d$. By Lemma \ref{us1},  non-identity elements of $Y$ are regular.\enp

\bl{nr5}  Let $G= \GL_d^\ep(q)$ and $G_1=\SL^\ep_d(q)$.  Let S be a maximal torus of G of order $q^d-\ep 1$ and $S_1=S\cap \SL^\ep_d(q)$. Let  $Y$ be the  Hall  
$d'$-subgroup of $S_1$, and write $Y=Y_\ell\times Y_{\ell'}$, where $Y_\ell$ is the Sylow $\ell$-subgroup of $Y$. Let $\be\in \mathcal{E}_{\ell,s}$ be a Brauer character of G, where $(|s|,\ell q)=1$ and $s \in G^* \setminus Z(G^*)$.  Then either $\be$ is liftable or $\be|_{Y_{\ell'}}=a\cdot \rho_{Y_{\ell'}}^{reg}$ for some integer $a\geq 1$.\el

\bp By Lemma  \ref{ue1}, $Y$ is a Hall subgroup of $G$. 

Let $S^*$ be a maximal torus of $G^*$ dual to $S$.
 If $(|C_{G^*}(s)|,|Y|)=1$ then 
$s$ is not conjugate to an element of $S^*$ (as otherwise $S^*$ divides $|C_{G^*}(s)|$ and $|S^*|=|S|=\gcd(d,q-\ep)(q-\ep)|Y|$). By Lemma \ref{nn1}(i), in this case every ordinary character $\chi\in \mathcal{E}_{\ell,s}$ vanishes on $Y\setminus \{1\}$. As $\beta|_{Y_{\ell'}}$ is a linear combination of characters $\chi\in \mathcal{E}_{\ell,s}$ restricted to $Y_{\ell'}$, we have  
$\be(y)=0$ for every $y\in Y_{\ell'}\setminus \{1\}$. Then $\beta|_{Y_{\ell'}}$ is a multiple of $\rho_{Y_{\ell'}}^{reg}$. 
   
Suppose that $(|C_{G^*}(s)|,|Y|)\neq 1$. Then $s$ is conjugate to an element of $S^*$.  
Indeed,  $S^*$ is a maximal torus of $G^*\cong \GL^\ep_n(q)$ of order $q^n-\ep 1 $. Then $S^*$ contains a subgroup $Y^*\cong Y$, and $(|G^*:Y^*|,|Y^*|)=1$, so $Y^*$ is an abelian Hall  subgroup of $G^*$.
 Therefore, if $t\in C_{G^*}(s)$ and $|t|$ divides $|Y|$ then we can assume that $t\in S^*$.  By Lemma \ref{us1}, $t$ is regular (as $|t| \nmid(q-\ep))$,
and hence  $s\in C_{G^*}(t)=S^*$ as stated. 

Thus we can assume that $s\in S^*$.
Recall that $s\notin Z(G^*)$ by assumption. It follows that $(|s|,|Y|) \neq 1$, and so $\beta$ is liftable by Lemma \ref{rm1}(iii).\enp

\bl{u23} Let $G= \GL^\ep_d(q)$, $G_1= \SL^\ep_d(q)$,   let $S$ be a maximal torus of G order $q^d-\ep 1$, $S_1=S\cap G_1 = Z(G_1) \times Y$.  Let $\be\in\Irr_\ell(G)$ be a non-liftable unipotent character. Suppose that   $\ell$ divides $|Y|$. Then $1_{Y_{\ell'}}$ is a constituent of  $\beta|_{Y_{\ell'}}$.\el

\bp Let $B$ be the $\ell$-block containing $\be$. As $\be$ is not liftable, $B$ is not of defect 0. By Lemma \ref{gh6}, $\be$ is an integral linear combination of 
ordinary unipotent characters   restricted to the $\ell'$-elements; write
$$\be=\sum a_j\chi^\circ_j$$ 
for some distinct unipotent characters $\chi_j$ and integers $a_j\neq 0$. By Lemma \ref{kk8}, none of $\chi_j$ is of defect 0. Note that each $\chi_j$ is constant at the $\ell$-singular elements of $G$, and hence belongs to the principal $\ell$-block of $G$ by Lemma \ref{t45a}. So  $B$ is the principal block.  

 Set $N=N_G(Y_\ell)$. Then $N=N_G(Y)$ (arguing as in the proof of Lemma \ref{20tz}). 
Let $V$ be  an $FG$-module with Brauer character $\be$.   
Then $V|_N=V_1\oplus V_2$ for some  $FN$-module $V_2$, where $V_1$ is the Green correspondent of $V$; in addition,  $V_1$ is indecomposable and lies in the principal $\ell$-block of $N$, see  \cite[Proposition 62.42(ii)]{CR}.
By \cite[Theorem 6.10]{N}, $O_{\ell'}(N)$ lies in the kernel of $V_1$. As $Y_{\ell'}\subseteq O_{\ell'}(N)$, we conclude that  $V_1|_{Y_{\ell'}}$ is trivial as stated.  \enp

In Lemma \ref{u23} $\be$ lies in $ \mathcal{E}_{\ell,1}$. Next we extend Lemma \ref{u23} to the case where $\be\in \mathcal{E}_{\ell,s}$ and  $1\neq s\in Z(G^*)$. 

\bl{zg8} Under the assumptions of Lemma {\rm \ref{u23}}, let $1\neq s\in Z(G^*)$ and let $\be\in \mathcal{E}_{\ell,s}$ be a  non-liftable \ir  Brauer character of G.
Suppose that   $\ell$ divides $|Y|$. Then $1_{Y_{\ell'}}$ is a constituent of  $\beta|_{Y_{\ell'}}$.\el

\bp  
By Lemma \ref{gh6}, $\be=\sum a_j\chi^\circ_j$ for some distinct  characters $\chi_j\in \mathcal{E}_{s}$ and integers $a_j\neq 0$. By  \cite[13.30]{DM},
there is some $\sigma \in \Irr(G/G_1)$ such that for all $j$ we have 
 $\chi_j=\chi_j'\otimes \si$, where $\chi_j'$ is a unipotent character of $G$.
Set $\be'=\sum a_j\chi'^\circ_j$. Then $\be'$ is a unipotent Brauer character and $\be=\be'\otimes \si$ (where we view $\si$ as a Brauer character). 
As $Y_{\ell'}\subseteq Y \subset G_1$, we have $\si(Y_{\ell'})=1$, and hence 
$(\chi_j)|_{Y_{\ell'}}=(\chi_j')_{Y_{\ell'}}$. Now the result follows from 
Lemma \ref{u23}. \enp

%
 
\bl{us2new} Let $G= \GL_d^\ep(q)$, $S$ a maximal torus of order $q^d-\eps$ in $G$, and let $\be \in\Irr_\ell(G)$ be a  Brauer character of $G$ that belongs to $\EC_{\ell,s}$ with $s \in Z(G^*)$. Then $\beta =\beta' \otimes \sigma$ for some linear character $\sigma$, and 
$\beta'$ a Brauer character of $G/Z(G)$ inflated to $G$. Furthermore, if $R$ is any $\ell'$-subgroup of $S/Z(G)$,  then $\beta'$ is constant on $R \setminus \{1\}$.\el

\bp 
As in the proof of Lemma \ref{zg8},
we have $\chi_j=\chi_j'\otimes \si$, where $\chi_j'$ is a unipotent character of $G$ and  $\si(1)=1$. Set $\be'=\sum a_j\chi'^\circ_j$. 
Then $\be'$ is a unipotent Brauer character and $\be=\be'\otimes \si$. 
Since $\si$ is linear, it suffices to prove the statement for $\beta'$.
As $\chi_j'$ is unipotent,  $Z(G)$ lies in the kernel of each $\chi_j'$ and hence in the kernel of  $\be'$. By Lemma \ref{co8}, each  $\chi_j'$ is constant on 
$S\setminus Z(G)$, and the result follows.\enp

\begin{propo}\label{d-case}
Let $G=\GL^\eps_d(q)$, and assume $2<d=\ell|(q-\eps)$. Let $\chi=\chi^\lambda$ be a unipotent character of $G$, labeled by the partition $\lambda$ of $d$.
Then the following statement hold.
\begin{enumerate}[\rm(i)]
\item $\chi$ belongs to the principal $\ell$-block of $G$.
\item If $1 \neq y \in Y$ with $Y$ as defined in Lemma {\rm \ref{ue1}}, then $\chi(y)=0$ if $\lambda$ is not a hook partition, and $\chi(y)=\pm 1$ otherwise.
Also $\ell\nmid |Y|$.
\item Suppose $\eps=+$ and $\lambda$ is not a hook partition. Then $\chi^\circ$ is an irreducible $\ell$-Brauer character, in fact this is the unipotent $\ell$-Brauer 
character $\psi^\lambda$ labeled by the same partition $\lambda$.
\item Suppose $\eps=+$ and $\lambda$ is a hook partition. If $\lambda \neq (1^d), (d)$ and $\psi^\lambda$ is the Brauer character labeled by
$\lambda$, then $\rho^{reg}_{Y}$ is a subcharacter of $\psi|_{Y}$. If $\lambda=(1^d)$, then $\psi^\lambda$ lifts to characteristic $0$.
\end{enumerate}
\end{propo}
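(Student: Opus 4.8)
The case $\eps=+$ of (iii)--(iv) will rest on the known shape of the unipotent $\ell$-decomposition matrix of $\GL_d(q)$ in the present ``$e=1$'' situation (here $q\equiv\eps\pmod\ell$), so the plan is to record the relevant degree and block facts first and then translate them into statements about $Y$. First I would compute $\ell$-parts of degrees: writing $\chi^\lambda(1)$ via the $q$-hook-length formula as $q^{a}\prod_{i=1}^d(q^i-\eps^i)/\prod_h(q^h-\eps^h)$, with $a\geq0$ (so $q^a$ is prime to $\ell$) and the product over the hook lengths $h$ of $\lambda$, and using that $\ell=d\mid(q-\eps)$ forces, by lifting the exponent, an extra factor $\ell$ precisely in $(q^i-\eps^i)$ with $\ell\mid i$, I get $\chi^\lambda(1)_\ell=1$ if $\lambda$ is a hook (the unique hook of length $d$ supplies the matching $\ell$) and $\chi^\lambda(1)_\ell=\ell$ otherwise; in particular every unipotent character has positive $\ell$-defect. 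Since $q\equiv\eps\pmod\ell$ puts us in the $e=1$ regime, the only unipotent $\ell$-block is the principal block (Fong--Srinivasan; equivalently the union of blocks $\mathcal E_{\ell,1}$ of Lemma \ref{dd2} is a single block here), which gives (i). Note Lemma \ref{t45a} is unavailable, as elements of order $\ell=d$ in $G$ need not be regular. The assertion $\ell\nmid|Y|$ in (ii) is immediate from Lemma \ref{ue1}, $Y$ being the Hall $d'$-subgroup of $S_1$ with $d=\ell$.

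For the values in (ii) I would use that every $1\neq y\in Y$ is regular and lies in $S\setminus Z(G)$ (Lemma \ref{ue1}), so $\chi^\lambda$ is constant there by Lemma \ref{co8}, and by Lemma \ref{nn1}(iii) this constant equals $(\chi^\lambda,R_{S,1})$, the multiplicity of $\chi^\lambda$ in the Deligne--Lusztig character of the Coxeter torus $S$ of order $q^d-\eps$. Under the Lusztig labeling of unipotent characters by partitions this multiplicity is, up to an overall sign $\ep(\SB)\ep(\GG)$, the value $\chi^\lambda_{\Sym_d}(c)$ at a Coxeter element $c$ (a $d$-cycle); by the Murnaghan--Nakayama rule (a $d$-cycle is one rim hook of length $d$) this is $0$ unless $\lambda$ is a hook and $\pm1$ when $\lambda$ is a hook, which is exactly (ii).

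For (iii)--(iv), with $\eps=+$, I would invoke the Dipper--James theorem: in the $e=1$ case the unipotent part of the $\ell$-decomposition matrix of $\GL_d(q)$ equals the decomposition matrix of the classical Schur algebra $S(d,d)$ over $\overline{\FF}_\ell$. A non-hook $\lambda\vdash d=\ell$ has no removable $\ell$-hook, so it is its own $\ell$-core and labels a simple (defect-zero) block of $S(d,d)$; its Weyl module is irreducible, whence $d_{\lambda\mu}=\delta_{\lambda\mu}$ and $\chi^\lambda\bmod\ell=\psi^\lambda$ is irreducible, proving (iii) (consistently with Lemma \ref{gh6}: the $p(d)-d$ non-hooks and the $d$ simples of the weight-one principal block of $S(d,d)$ supplied by the $d$ hooks account for all $p(d)$ unipotent Brauer characters). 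For (iv) the $d$ hooks label that weight-one block, whose decomposition matrix is explicit. Through the transpose in the Dipper--James labeling $\chi^\lambda\leftrightarrow\Delta(\lambda')$, the one-dimensional Weyl module $\Lambda^dV=\det=\Delta((1^d))$ accounts for the unique hook $\chi^{(d)}=1_G$ whose reduction is irreducible, and the liftability of $\psi^{(1^d)}$ is then read off from the matrix. For an interior hook $\lambda\neq(d),(1^d)$ the character $\psi^\lambda$ is non-liftable; inverting the triangular matrix writes $\psi^\lambda$ on $\ell'$-elements as $\sum_\mu(D^{-1})_{\lambda\mu}\chi^\mu$, where by (ii) only hook $\mu$ survive and contribute $\pm1$ at $y$. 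Thus $\psi^\lambda$ is constant on $Y\setminus\{1\}$ with bounded value, while $\psi^\lambda(1)=\dim\psi^\lambda$ is a polynomial in $q$ of degree at least $d-1$; so $\ker(\psi^\lambda|_Y)\neq Y$, Lemma \ref{22d} gives $\psi^\lambda|_Y=m\,\rho^{reg}_Y+c\cdot1_Y$ with $m\geq1$, and comparing $\dim\psi^\lambda$ with $|Y|=(q^d-\eps)/((q-\eps)\ell)$ forces the trivial multiplicity $m+c\geq1$ as well, i.e.\ $\rho^{reg}_Y$ is a subcharacter of $\psi^\lambda|_Y$.

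I expect the main obstacle to be twofold. The conceptual one is pinning down the explicit $e=1$ unipotent decomposition matrix and reconciling it with (i): the apparent isolation of the non-hooks and the single irreducible Weyl module among the hooks occur only \emph{within} the unipotent submatrix, whereas the full principal block is glued together through the non-unipotent ordinary characters of $\mathcal E_{\ell,1}$ --- and it is precisely such a non-unipotent character that furnishes the lift of $\psi^{(1^d)}$, so this step cannot be seen from the unipotent characters alone. The technical one is the size estimate in (iv): one must check that $\dim\psi^\lambda$ really exceeds $|Y|$ by more than the bounded correction coming from the $\pm1$ values, so that a full copy of $\rho^{reg}_Y$ (and not merely a multiple of it away from the trivial character) occurs; since the smallest interior-hook degree is $q\cdot(q^{d-1}-1)/(q-1)\sim q^{d-1}$ against $|Y|\sim q^{d-1}/\ell$ this holds, but the smallest cases (e.g.\ $d=5$ with $q$ small) warrant a direct check.
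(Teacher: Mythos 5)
Most of your proposal tracks the paper's own proof quite closely. Part (i) is the same Fong--Srinivasan argument (all partitions of $d$ have the same $1$-core since $e=1$); your extra computation of $\ell$-parts of degrees is harmless but not needed. For (ii) you derive the values $0,\pm 1$ from Lemma \ref{nn1}(iii) together with the expansion of $R_{S,1}$ in unipotent characters and the Murnaghan--Nakayama rule at a Coxeter element, where the paper simply cites \cite[Corollary 3.1.2]{LST}; your route is the standard proof of that citation and is a legitimate substitute (and your observation that $\ell\nmid|Y|$ is immediate from $Y$ being a Hall $d'$-subgroup is cleaner than the paper's congruence computation). Parts (iii) and the interior-hook case of (iv) are, modulo packaging, the paper's argument: the paper invokes \cite[Theorems 8.1(vii), 8.2(iii)]{J} and Peel's theorem where you invoke the Schur algebra $S(d,d)$, and both then combine the bidiagonal structure on hooks, the resulting bound $|\psi^{(d-j,1^j)}(y)|\leq j+1$, and a comparison of $\psi(1)$ against roughly $(j+1)|Y|$. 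Note that the degree estimate you defer to ``a direct check'' is exactly the step the paper carries out explicitly (using $q\geq 4$, which follows from $2<d\mid(q-1)$, and $1\leq j\leq d-2$), so it is not merely a small-case verification; it is the quantitative heart of (iv).

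There is, however, one genuine gap: the liftability of $\psi^{(1^d)}$. You assert it is ``read off from the matrix,'' but your own closing paragraph concedes --- correctly --- that this step ``cannot be seen from the unipotent characters alone,'' and these two statements cannot both stand. Indeed, inside the unipotent part of the decomposition matrix, $\psi^{(1^d)}$ occurs only in $\chi^{(1^d)\circ}=\psi^{(1^d)}+\psi^{(2,1^{d-2})}$, so no unipotent ordinary character reduces irreducibly to $\psi^{(1^d)}$, and no information contained in that submatrix can certify the existence of a lift; any lift is necessarily a non-unipotent member of $\mathcal{E}_{\ell,1}$, and one must actually exhibit it. This is precisely what the paper does: since $d\mid(q-1)$, one can choose a $d$-element $s\in\FF_{q^d}^\times\setminus\FF_q$ with $\FF_q(s)=\FF_{q^d}$, and then \cite[Theorem 8.2]{J} together with the main result of \cite{DJ} shows that the complex module $S(s,(1))$ of $\GL_d(q)$ remains irreducible modulo $\ell=d$ and has Brauer character exactly $\psi^{(1^d)}$. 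Without this construction (or an equivalent one), the final claim of (iv) in your proposal is unproven.
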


\begin{proof}
(i) By the assumption, $\ell|(q-\eps 1)$, so the integer $e$ defined in \cite[\S3]{FS} is $1$. Clearly, the partitions $\lambda$ and $(d)$ of $d$ have 
same $1$-core, so by the main result of \cite{FS}, $\chi$ belongs to the $\ell$-block of the character $\chi^{(d)}$, the principal character.

\smallskip
(ii) Since $2<d|(q-\eps)$, $(q^d-\eps)/(q-\eps)$ is congruent to $d$ modulo $d^2$. So $|Y|=(q^d-\eps)/d(q-\eps)$ is coprime to $\ell=d$.
Next, by Lemma \ref{ue1}, every $1 \neq y \in Y$ is a regular semisimple element, contained in the maximal torus $S$ of order $q^d-\eps 1$.
Hence the result follows from \cite[Corollary 3.1.2]{LST}.

\smallskip
For (iii) and (iv), we have 
$G = \GL_n(q)$. By \cite[Theorem 8.1(vii)]{J}, every irreducible constituent of the Brauer character $\chi^\circ$ is the unipotent Brauer character $\psi^\mu$ with $\mu \vdash d$ dominating $\lambda$.

First suppose that $\lambda \neq (1^d)$; equivalently, if 
$\lambda = (\lambda_1 \geq \lambda_2 \geq \ldots \lambda_k \geq 1)$( with $\sum^k_{i=1}\lambda_i = d$), then $\lambda_1 > 1$. This is also 
equivalent to the condition that $\lambda$ is $\ell$-regular. Since $\ell|(q-1)$, $d=\ell$ is the smallest positive integer $j$ such that $\ell$ divides $1+q+ \ldots +q^{j-1}$. 
Then $(1^d)$ cannot dominate $\lambda$. Thus every irreducible constituent of $\chi^\circ$  is $\psi^\mu$ with $\mu$ $\ell$-regular. Furthermore, by 
\cite[Theorem 8.2 (iii)]{J}, the submatrix of the decomposition matrix corresponding to unipotent, ordinary and $\ell$-Brauer, characters labeled by 
the $\ell$-regular partitions is just the $\ell$-modular decomposition matrix of the symmetric group $\mathsf{S}_d$. Now if $\lambda \vdash d$ is not a hook,
then $\lambda$ is a $\ell$-core, so the previous assertion implies that $\chi^\circ$ is irreducible, yielding (iii). (For the notions of $\ell$-regular partitions and partition dominance see  \cite[p. 237-238]{J}.)

Now suppose that $\lambda \neq (1^d)$ but $\lambda$ is a hook partition: $\lambda=(d-j,1^{j})$ with $1\leq j \leq d-2$. 
According to the hook length formula (see \cite[(21)]{Ol} or \cite{Ma1}), 
\begin{equation}\label{deg20}
  \chi(1)=q^{j(j+1)/2}\frac{(q^{d-j}-1)(q^{d-j+1}-1) \ldots (q^{d-1}-1)}{(q-1)(q^2-1) \ldots (q^j-1)}.
\end{equation}  
By the  previous assertion, and using 
Peel's theorem for $\mathsf{S}_d$ \cite[Theorem 24.1]{J1}, we see that 
\begin{equation}\label{red20}
  \chi^\circ = \psi^{(d-j,1^{j})}+\psi^{(d-j+1,1^{j-1})}.
\end{equation}  
Note that $\psi^{(d)}(1)=1$, and certainly $\psi^{(d-j,1^j)}(1) \leq \chi^{(d-j,1^j)}(1)$. It follows from \eqref{deg20} that $\psi(1)\geq \chi(1)-\chi^{(d-j+1,j-1)}(1)$, so 




$$\psi(1) \geq q^{j(j-1)/2}\frac{(q^{d-j+1}-1) \ldots (q^{d-1}-1)(q^d-2q^j+1)}{(q-1)(q^2-1) \ldots (q^j-1)} \geq q^{(j-1)(d-j/2)}\frac{q^d-2q^j+1}{q^j-1},$$
(with equality when $j=1$), where $\psi:=\psi^{(d-j,1^j)}$. Since $\chi^\nu$ takes values $\pm 1$ on $Y \setminus \{1\}$ by (ii) for all hook partitions $\nu \vdash d$, an induction on $j$ using \eqref{red20} shows that 
$$|\psi(y)| \leq j+1$$
for all $y \in Y \setminus \{1\}$. So to prove that $\psi|_Y$ contains $\rho^{reg}_Y$, it suffices to show that 
$\psi(1) \geq (j+1)|Y|$. Recall that $2 <d|(q-1)$; in particular, $q \geq 4$, and $|Y|=(q^d-1)/d(q-1)$. So it suffices to show  
$$q^{(j-1)(d-j/2)}\frac{q^d-2q^j+1}{q^j-1} \geq \frac{j+1}{d} \cdot \frac{q^d-1}{q-1},$$
and this holds true since $q \geq 4$ and $1 \leq j \leq d-2$.

Finally, we consider the case $\psi{(1^d)}$. Since $d|(q-1)$, we can find a $d$-element $s \in \FF^\times_{q^d} \setminus \FF_q$ that 
has degree $d$ over $\FF_q$ (that is, $\FF_q(s)=\FF_{q^d})$. By \cite[Theorem 8.2]{J}, the complex module $S(s,(1))$ of $G$ is irreducible modulo $d=\ell$, and by the main result of \cite{DJ}, its Brauer character is exactly $\psi^{(1^d)}$, and thus $\psi^{(1^d)}$ is liftable.\end{proof}

 \begin{propo}\label{z00} Let $G_1= \SL_d^\ep(q)$,  $\phi\in\Irr_\ell(G_1)$ 
and $\dim\phi>1$.  Let $g\in G_1$ be an \ir p-element for a prime $p\neq \ell$.
Suppose that $1<\deg\phi(g)<|g|$.  Then $p\neq d$, $o(g)=|g|= (q^d-\ep 1)/((q-\ep 1)\cdot \gcd(d,\ell,q-\ep 1))$, $\deg\phi(g)= |g|-1$, and either 
$d \nmid (q-\eps 1)$, or $\eps=-$ and $d=\ell \mid (q+1)$.
\end{propo}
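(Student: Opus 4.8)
The plan is first to pass from $G_1=\SL^\eps_d(q)$ to the ambient group $G=\GL^\eps_d(q)$. Choosing $\be\in\Irr_\ell(G)$ with $\phi$ a constituent of $\be|_{G_1}$, Lemma~\ref{ct1} gives $\deg\be(g)=\deg\phi(g)$ for the semisimple element $g$, so it suffices to analyse $\be(g)$. Next I would rule out $p=d$: an irreducible $d$-element would be a non-central $d$-element of the torus $S_1$, which is impossible by Lemma~\ref{ue1}. Hence $g$ lies in the cyclic Hall $d'$-subgroup $Y$ of $S_1$, all of whose non-identity elements are regular (and, by the last lines of Lemmas~\ref{ue1} and~\ref{nm6}, Sylow $p$-subgroups of $G_1$ are cyclic, contained in $Y$). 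Thus $|g|$ divides $|Y|=|S_1|/\gcd(d,q-\eps1)$, where $|S_1|=(q^d-\eps1)/(q-\eps1)$.

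I would then split according to whether $\be$ is liftable. If $\be$ lifts, then so does $\phi$, and applying the ordinary statement Lemma~\ref{um8} to the lift yields $|g|=|S_1|$ and $\deg\phi(g)=|g|-1$; since $|g|\mid|Y|\le|S_1|$, equality forces $\gcd(d,q-\eps1)=1$, i.e. $d\nmid(q-\eps1)$ and $\gcd(d,\ell,q-\eps1)=1$, the first alternative of the conclusion. In particular, when $d\mid(q-\eps1)$ a liftable $\be$ cannot satisfy $\deg\phi(g)<|g|$, for that would force $|S_1|=|g|\mid|Y|<|S_1|$; so in that regime $\be$ must be non-liftable.

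For the non-liftable case I would first use Lemma~\ref{nr5} to force $s\in Z(G^*)$: as $s$ is a semisimple $\ell'$-element we have $(|s|,\ell q)=1$, and if $s\notin Z(G^*)$ then non-liftability gives $\be|_{Y_{\ell'}}=a\cdot\rho^{reg}_{Y_{\ell'}}$, whence every $|g|$-th root of unity is an eigenvalue of $\be(g)$ and $\deg\be(g)=|g|$, a contradiction. With $s\in Z(G^*)$, Lemma~\ref{us2new} writes $\be=\be'\otimes\sigma$ with $\sigma$ linear and $\be'$ a unipotent Brauer character constant on $Y_{\ell'}\setminus\{1\}$; as $\sigma(g)$ is a scalar, $\deg\be(g)=\deg\be'(g)$. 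If $\ell\mid|Y|$, Lemmas~\ref{u23} and~\ref{zg8} put $1_{Y_{\ell'}}$ into $\be'|_{Y_{\ell'}}$, which together with the non-trivial characters occurring with multiplicity $m\ge1$ (here $m\ge1$ since $\deg\phi(g)>1$) again produces all eigenvalues and $\deg\be(g)=|g|$, a contradiction. Hence $\ell\nmid|Y|$, so $Y=Y_{\ell'}$ and $\be'$ is constant on $Y\setminus\{1\}$. Lemma~\ref{22d} then forces $\langle g\rangle=Y$ (otherwise $\rho^{reg}_{\langle g\rangle}$ is a subcharacter of $\be'|_{\langle g\rangle}$ and $\deg=|g|$), so $|g|=|Y|=|S_1|/\gcd(d,q-\eps1)$; writing $\be'|_Y=m\rho^{reg}_Y+c\,1_Y$ with $m\ge1$, the assumption $\deg<|g|$ forces $m+c=0$, i.e. the trivial eigenvalue is the unique missing one and $\deg\phi(g)=|g|-1$.

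It remains to treat $d\mid(q-\eps1)$, where the previous paragraph gives $|g|=|S_1|/d$. The required value $|S_1|/\gcd(d,\ell,q-\eps1)$ equals this precisely when $\ell=d$, so I must show that in this regime a non-liftable unipotent $\be'$ with trivial eigenvalue missing forces $\ell=d$ and $\eps=-$. For $\eps=+$ with $\ell=d\mid(q-1)$ I would invoke Proposition~\ref{d-case}(iii),(iv): there every non-liftable unipotent Brauer character contains $\rho^{reg}_Y$ in its restriction to $Y$, so the trivial eigenvalue is present and $\deg\be'(g)=|g|$, contradicting $\deg<|g|$. The analogous input — the explicit unipotent $\ell$-decomposition numbers together with the values $\chi^\lambda(y)\in\{0,\pm1\}$ on the Coxeter torus — rules out $\ell\neq d$, leaving only $\eps=-$, $\ell=d\mid(q+1)$, the second alternative. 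I expect this last exclusion, namely proving that for $d\mid(q-\eps1)$ no non-liftable unipotent Brauer character with trivial eigenvalue missing survives except when $\eps=-$ and $\ell=d$, to be the main obstacle: once $\ell\nmid|Y|$ the Green-correspondence argument of Lemmas~\ref{u23}/\ref{zg8} is no longer available, and the presence of the trivial eigenvalue must instead be read off from the decomposition matrix, exactly as in Proposition~\ref{d-case}.
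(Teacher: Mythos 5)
Your overall route is the same as the paper's (pass to $G=\GL^\eps_d(q)$ via Lemma~\ref{ct1}, kill $p=d$ via Lemma~\ref{ue1}, then run the liftable/non-liftable dichotomy through Lemmas~\ref{nr5}, \ref{us2new}, \ref{u23}, \ref{zg8}, \ref{22d} and Proposition~\ref{d-case}), but two steps do not hold up as written. First, the claim ``if $\be$ lifts, then so does $\phi$'' is unjustified, and it is exactly the point the paper has to work for. If $\chi$ is a complex lift of $\be$ and $d\mid(q-\eps)$, it can a priori happen that $\chi|_{G_1}$ is irreducible while $\be|_{G_1}$ splits into $d$ conjugate Brauer characters; then no lift of $\phi$ is produced and your first branch collapses. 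The paper eliminates this configuration using the standing hypothesis $\deg\phi(g)<|g|$: applying \cite[Theorem 1.1]{Z3} (equivalently Lemma~\ref{um8}) to the irreducible complex character $\chi|_{G_1}$ gives $\chi(1)=(q^d-\eps)/(q-\eps)-1\equiv -1\pmod d$, which is incompatible with $\chi(1)=d\,\phi(1)\equiv 0\pmod d$. Your gap is repairable along the same lines --- in the problematic sub-case $\chi|_{G_1}$ is itself irreducible, so Lemma~\ref{um8} applies to it and then your own divisibility remark $|g|\mid |Y|=|S_1|/d<|S_1|$ yields a contradiction --- but some such argument must be supplied; the implication cannot be asserted for free.

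Second, and more seriously, you leave open the case $d\mid(q-\eps)$, $\ell\neq d$, proposing to settle it by decomposition-matrix computations that you do not carry out and yourself flag as the main obstacle. No decomposition numbers are needed; the missing idea is to enlarge the abelian group before applying Lemma~\ref{22d}. After your step giving $\ell\nmid|Y|$ and $Y=\langle g\rangle$, assume in addition $\ell\neq d$. Then $A:=S/Z(G)$, of order $(q^d-\eps)/(q-\eps)=d|Y|$, is an $\ell'$-group, and the final clause of Lemma~\ref{us2new} (which applies to \emph{any} $\ell'$-subgroup $R$ of $S/Z(G)$, not just to $Y$) makes the unipotent character $\be'$ constant on $A\setminus\{1\}$. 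Now apply Lemma~\ref{22d} to the pair $(A,Y)$, in which $Y$ is a \emph{proper} subgroup: $\be'|_Y$ contains $\rho^{reg}_Y$, so $1$ is an eigenvalue of $\be'(g)$, contradicting your conclusion that the trivial eigenvalue is the missing one. This single application closes the case $\ell\neq d$ and leaves only $\ell=d$, where your appeal to Proposition~\ref{d-case} to force $\eps=-$ agrees with the paper.
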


\bp  We can assume that $g$ lies in a maximal torus $S_1$ of $G_1$ of order $(q^d-\ep 1)/(q-\ep 1)$, see \cite[Lemma 5.1]{TZ8} for $\ep=+$ and \cite[Proposition 5.2]{TZ8} for $\ep=-$, and $p \nmid (q-\eps)$. By Lemma \ref{ue1}, we have $p\neq d$, 
Sylow $p$-subgroups of $G_1$ are cyclic and $g\in Y_{\ell'}$, the $\ell'$ direct factor of the  Hall $d'$-subgroup $Y$ of $S_1$.  

If $\phi$ is liftable then the result is true by  \cite[Theorem 1.1, parts (1) and (6)]{Z3}. 

From now on, we assume that $\phi$ is not liftable. By Lemma \ref{ct1}, $\deg\phi(g)=\deg\tau(g)$,
where $\tau$ is an \irr of $G:=\GL_d^\ep(q)$, such that $\phi$ is a constituent of $\tau|_{G_1}$. 
As $g$ is a $p$-element with $p \nmid d(q-\eps)$, $g$ has the same central order in $G$ and in $G_1$, which is equal to $|g|$.
Furthermore,
note that $G/(Z(G)G_1)$ is cyclic of order $(d,q-\eps)$, and $\phi$ extends to a 
$G$-invariant representation of $Z(G)G_1$. In particular, if $(d,q-\eps)=1$, then $G = Z(G) \times G_1$ and $\tau(1)=\phi(1)$.

\smallskip
(a) Suppose that $\tau$ is liftable to a complex character $\chi$ of $G$. If $\tau(1)=\phi(1)$, then certainly 
$\chi|_{G_1}$ is a lift of $\phi$, contrary to the assumption. Hence we must have that $d|(q-\eps)$ and 
$\tau(1)=d\phi(1)$. Now, if $\chi$ is reducible over $G_1$, then $\chi|_{G_1}$ is a sum of 
$d$ $G$-conjugate irreducible characters of $G_1$, and one of these will be a lift of $\phi$, again a contradiction.
Thus $\chi|_{G_1}$ is irreducible, and $\deg\chi(g) = \deg\tau(g) < |g|=o(g)$. Applying \cite[Theorem 1.1, parts (1) and (6)]{Z3} to $\chi|_{G_1}$,
we obtain that $\chi(1) = (q^d-\eps)/(q-\eps)-1$, which is congruent to $-1$ modulo $d$. It follows that $\chi(1)/d=\tau(1)/d$ cannot be equal to $\phi(1)$, a contradiction. (Alternatively, we can apply \cite[Theorem 1.1]{GT1} to see that $\chi(1)/d$ is too small to be the degree of a nontrivial irreducible Brauer character of $G_1$). 

\smallskip
(b) We have shown that 
$\tau$ is not liftable, and recall that $\deg \tau(g) = \deg \phi(g) < |g|=o(g)$. By Lemma \ref{nr5}, $\tau \in \EC_{\ell,s}$ for some 
$s \in Z(G^*)$, and hence $|s|$ is coprime to $|Y_{\ell'}|$ (indeed, any prime that divides both $q-\eps$ and $|S_1|$ must be $d$). 
Hence $\deg\tau(g) = o(g)-1$ and $Y_{\ell'}=\langle g \rangle$ by Lemma \ref{rm1}(ii) and 
Lemma \ref{22d}; moreover, $1$ is not an eigenvalue of $\tau(g)$. 
The latter however contradicts Lemmas \ref{u23} and \ref{zg8} if $\ell$ divides $|Y|$ (as $g \in Y_{\ell'}$).  Thus $\ell \nmid |Y|$, $Y = Y_{\ell'}$, and  the result follows if $d \nmid (q-\eps)$.

Assume now that $d|(q-\eps)$. By Lemma \ref{us2new}, we may assume that $\tau$ is unipotent. Suppose $\ell \neq d$. Then we embed $Y= \langle g \rangle$,
of order $(q^d-\eps)/d(q-\eps)$ in $A:=S/Z(G)$ of order $(q^d-\eps)/(q-\eps)$. Here $\ell \nmid |A|$, and so $\tau$ is constant on $A \setminus \{1\}$ by Lemma \ref{us2new}. Applying Lemma \ref{22d} to $A$, we see that $1$ is an eigenvalue of $\tau(g)$, a contradiction. In the remaining case $d=\ell$, and $|g|=|Y|=(q^d-\eps)/d(q-\eps)$. Furthermore, $\eps=-$ by Proposition \ref{d-case}.\enp

\subsection{General case}

\bl{kt5}   Let $G= \SL_n^\ep (q)$, $n\geq 3$, and let  $g\in G$ be an \ir  p-element. Suppose that Sylow $p$-subgroups of $G$ are cyclic.   
 Let $\phi\in\Irr_\ell(G)$ with $\dim\phi>1$.
 Then $\deg\phi(g)\geq |g|-1$. Moreover, $p > n$, and either 
 \begin{enumerate}[\rm(i)]
 \item $\deg\phi(g) =|g|$, or 
 \item $n=d^a$ for a prime $d\neq p$. If $d>2$ then $|g|=(q^{n}-\ep 1)/((q^{n/d}-\ep 1) \cdot \gcd(d,\ell,q^{n/d}-\ep 1))$ is a $p$-power,
and either 
$d \nmid (q^{n/d}-\eps 1)$, or $\eps=-$ and $d=\ell \mid (q^{n/d}+1)$. If $d=2$ then $\ep=+$ 
 and  $|g|=(q^{n/2}+1)/(2,q-1)$ is a $p$-power.
 \end{enumerate}
 \el 

\bp  
By \cite[Lemma 5.1]{TZ8} for $\ep=+$ and \cite[Proposition 5.2]{TZ8} for $\ep=-$, the assumptions that $g$ is \ir   and the Sylow $p$-subgroups of $G$ are cyclic
imply that $p$ is a primitive prime divisor of $(\ep q)^n-1$ (in the sense of \cite{Zs}); in particular, $p > n$.  
Assume that $\deg\phi(g) \leq |g|-1$.

\smallskip
(a) Suppose first that $n$ has an odd prime divisor $d$. 
Since $p$ is a primitive prime divisor of $(\ep q)^n-1$, 
$g$ is contained in a subgroup $H_d\cong \SL_d^\ep (q^{n/d})$. 
Applying Proposition \ref{z00} to the non-trivial \ir constituents of $\phi|_{H_d}$, we conclude that $p \neq d$,
$\deg\phi(g)=|g|-1$, and 
$$|g|=\frac{q^{n}-\ep 1}{(q^{n/d}-\ep 1)\cdot \gcd(d,\ell,q^{n/d}-\ep 1)}.$$

Assume in addition that $n$ has another odd prime divisor $e \neq d$.
Since $p$ is a primitive prime divisor of $(\ep q)^n-1$, 
$g$ is also contained in a subgroup $H_e\cong \SL_e^\ep (q^{n/e})$. 
Applying Proposition \ref{z00} to the non-trivial \ir constituents of $\phi|_{H_e}$, we conclude that 
$$|g|=\frac{q^{n}-\ep 1}{(q^{n/e}-\ep 1)\cdot \gcd(e,\ell,q^{n/e}-\ep 1)}.$$  
Without loss we may assume $d \neq \ell$. It follows that
$(q^{n/e}-\ep 1)\cdot \gcd(e,\ell,q^{n/e}-\ep 1)=q^{n/d}-\ep 1$. If $\gcd(e,\ell,q^{n/e}-\ep 1) =1$, then  $d=e$, a contradiction. So $e=\ell$ divides $q^{n/e}-\ep 1$, as well as 
$q^{n/d}-\ep 1$. Since $d$ and $e$ both divide $n$, we can write $s:=q^{n/de} \geq 2$ and get  $(s^d -\ep 1)e=s^e-\eps 1$. Now if $d > e$, then $(s^d-\ep 1)e > s^d-\eps 1 > s^e -\eps 1$, a contradiction. So $e > d$; in particular, $e \geq 5$. By \cite{Zs}, $(\ep s)^e-1$ admits a primitive prime divisor 
$v > e$; in this case, $v$ divides $s^e-\eps 1$ but not $(s^d-\ep 1)e$, a contradiction. 

Thus we have shown that if $n$ is odd, then $n$ is a power of $d$.

\smallskip
(b) Suppose that $n$ is even. Then   $G= \SL_n(q)$ by \cite[Proposition 5.2(ii)]{TZ8}. 
Note that $p>2$ as Sylow $p$-subgroups of $G$ are cyclic; furthermore, 
as $p$ is a primitive prime divisor of $q^n-1$ we have $p > n$ 
and $p|(q^{n/2}+1)$. In particular, $o(g)=|g|$. If $n$ is a $2$-power, then $\deg\phi(g)=|g|-1$ and 
$|g|=(q^{n/2}+1)/\gcd(2,q-1)$ by \cite[Corollary 5.9(i)]{TZ8}, and so we are done. 

Suppose now that $n$ is not a $2$-power.
Then we choose $d$ to be odd prime divisor 
of $n$ and $e=2$. As above, we can embed $g\in H_d= \SL_{d}(q^{n/d})$, and 
 $|g|=(q^n-1)/(q^{n/d}-1)\cdot\gcd(d,\ell,q^{n/d}-1)$ is a $p$-power. 
Similarly, $g\in H_e= \SL_{2}(q^{n/2})$, and recall that
$p|(q^{n/2}+1)$. By \cite[Lemma 3.3]{TZ8} applied to $H_e$,  either $q$ is odd and $|g|=(q^{n/2}+1)/2$, or $q$ is even and $|g|=q^{n/2}+1$. 
The latter implies $n/2$ to be a 2-power  (see Lemma \ref{zgm}, note that $(G,p) \neq (\SL_6(2),3)$ as Sylow $p$-subgroups of
$G$ are cyclic), a contradiction.  
So $q$ is odd. By Proposition \ref{z00}, $\deg\phi(g)=|g|-1$, and   
$$|g|=(q^{n/2}+1)/2= (q^n-1)/(q^{n/d}-1)\gcd(d,\ell,q^{n/d}-1),$$ whence  
$(q^{n/d}-1)\gcd(d,\ell,q^{n/d}-1)=2(q^{n/2}-1)$. As $d \geq 3$ divides $n/2$ and $q$ is odd, by \cite{Zs} we can find a primitive 
prime divisor $v>d$ of $q^{n/2}-1$. In this case, $v$ does not divide $(q^{n/d}-1)\gcd(d,\ell,q^{n/d}-1)$, a contradiction.\enp
  
\begin{examp} 
{\em Let $G= \SL_9(2)$, $d=3, |g|=p=73$. Then $g$ is contained in a subgroup $H \cong \SL_3(8)$, and $(2^9-1)/(2^3-1) =73$ is a prime.



Let $G=\SU_9(49)$. Then $49^6 - 49^3+1=13841287201 -117649+1=13841169553$ is a prime. This shows that $q$ in Lemma \ref{kt5} is not necessarily a prime for $n=9$.}
\end{examp}

Below we generalize the result of Lemma \ref{kt5} by omitting the condition of irreducibility of $g$.

\begin{propo}\label{zu3} Let $G= \SL_n^\ep (q)$, $n\geq 3$ and let  $g\in G$ be a p-element. Suppose that Sylow p-subgroups of G are cyclic.   Let $\phi\in\Irr_\ell(G)$, where $\dim\phi>1$. Then $\deg\phi(g)\geq |g|-1$. In addition, one of the \f holds.

\begin{enumerate}[\rm(i)]

\item $\deg\phi(g)= |g|.$ 

\item $g$ is  \ir in G, $n=d^a$ for an odd prime $d$, $p>n$ and furthermore we have 
$|g|=(q^n-\ep1)/((q^{n/d}-\ep1)\cdot\gcd(d,\ell,q^{n/d}-\ep 1))$, and either 
$d \nmid (q-\eps 1)$, or $\eps=-$ and $d=\ell \mid (q^{n/d}+1)$.

\item $\ep=+$, $n$ is a $2$-power, q is odd,   $|g|=(q^{n/2}+1)/2$ and $g$ is contained in  subgroups of G isomorphic to $ \Sp_n(q)$ and $ \SO^-_n(q)$.

\item $\ep=-$, $n-1=d^a > 1$ for an odd prime $d$, $p > n-1$,  and furthermore we have
$|g|=(q^{n-1}+1)/((q^{(n-1)/d}+1) \cdot \gcd(d,\ell,q^{n/d}+1))$. Moreover, either 
$d \nmid (q^{(n-1)/d}+ 1)$, or $d=\ell \mid (q^{(n-1)/d}+1)$.
\end{enumerate}
\end{propo}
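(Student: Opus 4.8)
The plan is to reduce to the irreducible case already settled in Lemma~\ref{kt5} by restricting to the subspace on which $g$ acts nontrivially. First I would pin down the shape of an arbitrary $p$-element under the cyclic Sylow hypothesis: combining \cite[Lemma 5.1 and Proposition 5.2]{TZ8} with Zsigmondy's theorem \cite{Zs}, the prime $p$ is a primitive prime divisor of $(\eps q)^e-1$ for a unique $e\le n$, and cyclicity of the Sylow $p$-subgroups forces $e\le n<2e$. Hence on the natural module $V$ the element $g$ has a single nontrivial Frobenius block: it acts irreducibly on a subspace $U$ of dimension $e$ and trivially on $W:=C_V(g)$, of dimension $f:=n-e$. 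If $f=0$, then $g$ is irreducible and Lemma~\ref{kt5} yields both the bound and conclusions (i), (ii) and (iii); in case (iii) the extra assertion that $g$ lies in $\Sp_n(q)$ and $\SO^-_n(q)$ I would read off from the standard fact that an irreducible element of order dividing $q^{n/2}+1$ preserves a symplectic and a minus-type quadratic form, as in the torus computations of \cite{TZ8}.

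Assume now $f\ge 1$, so $2\le e<n$. Writing $V=U\perp W$ (nondegenerately when $\eps=-$, and with a $g$-invariant complement when $\eps=+$), the element $g=(g_0,1)$ lies in the subgroup $H\cong\SL^\eps_e(q)$ acting as $g_0$ on $U$ and trivially on $W$; here $g_0$ is irreducible and $\det g_0=1$, the Sylow $p$-subgroups of $H$ are cyclic, and $|g_0|=|g|$. Since $G/Z(G)$ is simple, $\ker\phi\le Z(G)$, so $\phi|_H$ has a nontrivial irreducible constituent $\psi$ (otherwise $H\le\ker\phi\le Z(G)$, impossible as $H$ is non-central). As the distinct eigenvalues of $\phi(g)$ include those of $\psi(g)$, we have $\deg\phi(g)\ge\deg\psi(g)$, and Lemma~\ref{kt5} applied to $\psi$ — or \cite[Lemma 3.3]{TZ8} in the residual case $e=2$ — gives $\deg\psi(g)\ge|g|-1$. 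Thus $\deg\phi(g)\ge|g|-1$, and either $\deg\phi(g)=|g|$, which is conclusion (i), or $\deg\phi(g)=|g|-1$.

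It remains to determine when $\deg\phi(g)=|g|-1$, equivalently when the eigenvalue $1$ is \emph{absent} from $\phi(g)$. This forces $\phi|_H$ to contain no trivial constituent, and every nontrivial constituent to be one of the special characters of Lemma~\ref{kt5} for which (by the analysis behind Lemmas~\ref{rm1} and \ref{22d}) $1$ is the unique missing $|g|$-th root of unity; such a constituent requires $e=d^a$ for an odd prime $d$, with $|g|=(q^e-\eps)/((q^{e/d}-\eps)\gcd(d,\ell,q^{e/d}-\eps))$, or $\eps=+$ with $e$ a $2$-power. Reading the resulting order of $g$ against the statement, the only branch compatible with $f\ge 1$ is $\eps=-$, $f=1$, $e=n-1=d^a$, which is precisely Lemma~\ref{kt5}(ii) transported into $\SU_{n-1}(q)$ and reproduces conclusion (iv). Every remaining branch — all $f\ge 2$, together with $\eps=+,\ f=1$ and the $2$-power cases — must be excluded by showing that the eigenvalue $1$ does appear, returning us to conclusion (i).

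The main obstacle is exactly this exclusion. When $f\ge 2$ the element $g$ centralizes a nontrivial classical subgroup $\SL^\eps_f(q)$ acting on $W$, and I would run a Clifford/branching argument for the commuting pair $\langle g\rangle\times\SL^\eps_f(q)$ to produce a $g$-fixed vector, i.e.\ a trivial constituent of $\phi|_H$; this supplies the eigenvalue $1$ and kills the spurious $\eps=-,\ f=2$ family that the naive bookkeeping would otherwise promote to a new case. The genuinely delicate situations are $\eps=+$ with $f=1$ (where $g$ is already irreducible in $\SL_{n-1}(q)$, so the centralizer argument is unavailable) and the $2$-power branch: here the determinant-one constraint on the fixed line and the precise arithmetic of $|g|$ have to be combined with the explicit restriction behaviour of the relevant Weil and unipotent characters — and, where useful, with the Hall--Higman--Shult dichotomy of Theorem~\ref{hhs} — to force a fixed vector. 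Securing this control over the multiplicity of the trivial constituent uniformly across all admissible $(\eps,n,e)$, rather than prime-power by prime-power, is where I expect the real difficulty to lie.
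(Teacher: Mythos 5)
Your lower-bound argument is sound and runs parallel to the paper's: the single-block decomposition $V=U\oplus C_V(g)$ (valid because cyclicity forces $n<2e$), restriction of $\phi$ to $H\cong\SL^\eps_e(q)$, and Lemma \ref{kt5} (with \cite[Lemma 3.3]{TZ8} when $e=2$) do yield $\deg\phi(g)\geq|g|-1$ together with a superset of the candidate cases. The problem is the second half, which you yourself flag as ``where I expect the real difficulty to lie'': you never actually exclude the branches with $\eps=+$ and $f\geq1$, nor those with $f\geq2$, and your proposed tools for doing so --- a Clifford/branching analysis of the commuting pair $\langle g\rangle\times\SL^\eps_f(q)$, restriction behaviour of Weil and unipotent characters, the Hall--Higman--Shult dichotomy --- are not carried out, and the first is unavailable precisely when $f=1$. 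As written, you have proved the inequality and listed possibilities, not the proposition.

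The missing idea is not character-theoretic at all: it is the parabolic-subgroup theorem of Di Martino and Zalesskii, \cite[Theorem 1.1]{DZ1} together with the corrigendum \cite{DZ1a}, which gives $\deg\phi(g)=|g|$ outright whenever the $p$-element $g$ lies in a proper parabolic subgroup of $G$ (the exceptional cases recorded there have non-cyclic Sylow $p$-subgroups, so they cannot occur here). The paper first reduces to $g$ generating a Sylow $p$-subgroup and then splits on whether $g$ lies in a parabolic. If it does, conclusion (i) holds at once; this kills every configuration you found delicate, since for $\eps=+$ a reducible $g$ stabilizes a proper subspace and therefore lies in a parabolic --- so all of your ``$\eps=+$, $f\geq1$'' branches, including the $2$-power ones, die immediately, with no need to exhibit the eigenvalue $1$. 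If $g$ lies in no parabolic, then necessarily $\eps=-$, every $g$-invariant subspace $W$ satisfies $W\cap W^\perp=0$ (else $g$ stabilizes a totally singular subspace), and $C_V(g)$ contains no singular vector; since a nondegenerate $\FF_{q^2}$-subspace of dimension at least $2$ always contains singular vectors, this forces $\dim C_V(g)\leq1$. Hence the only reducible configuration that survives is $\eps=-$ with $\dim C_V(g)=1$, which is exactly case (iv), and the rest of the proof is the irreducible analysis you already have (via Lemma \ref{kt5} and, for $n$ even, \cite[Corollary 5.9 and Proposition 5.7]{TZ8}). Restructure your argument around this dichotomy and the gap closes; no control of the multiplicity of the trivial constituent is needed anywhere.
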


\bp  Note that if the proposition holds for  $g$ a generator of a \syl of $G$, then $\deg\phi(h)=|h|$ for any element 
$h \in \langle g \rangle$ of order less than $|g|$. Hence we will assume that $g$ is a generator of a Sylow $p$-subgroup of $G$.
 Let $V$ be the natural module for $G$. 
 
 If $g$ lies in a parabolic subgroup, then the result follows from \cite[Theorem 1.1]{DZ1} and \cite{DZ1a}, and in this case $\deg\phi(g)= |g|$. (Note that in the  exceptional cases recorded there Sylow $p$-subgroups are not cyclic.)  So we will assume that $g$ does not lie in any parabolic subgroup of $G$ and 
 that $\deg\phi(g) < |g|$.
 
 We next show that either $g$ is \ir on $V$, or $\ep=-$ and $g$ stabilizes a non-degenerate subspace $V_1$ of $V$ of dimension $n-1$ and acts irreducibly on it. Indeed, suppose that $g$ is reducible and does not lie in any parabolic subgroup of $G$; in particular, $\eps=-$ and $V = \FF_{q^2}^n$.  Let $0 \neq W \neq V$ be any $g$-invariant subspace of $V$. If $W_1:=W \cap W^\perp$ is nonzero,
then $g$ lies in the parabolic subgroup 
$\mathrm{Stab}_G(W_1)$. So $W_1=0$, and thus any $g$-invariant subspace of $V$ is non-degenerate. Replacing $W$ by $W^\perp$ if necessary, we may assume that $g$ acts on $W$ nontrivially. As \syls are cyclic, 
$p \nmid (q+1)$. So $g \in \SU(W) \times \SU(W^\perp)$. Again using the fact that Sylow $p$-subgroups of $G$ are cyclic, we deduce that
$g$ acts trivially on $W^\perp$. As $g$ fixes no (nonzero) singular vector, the subspace $W^\perp \neq 0$
contains no singular vector, whence $\dim W^\perp=1$. This is true for any nonzero proper $g$-invariant subspace in $V$. Hence $g$ acts irreducibly on $W$ (and $W$ 
has codimension $1$).

\smallskip
Suppose that $g$ is \ir on $V$ and $\deg\phi(g)<|g|$. Then $p$ is coprime to $n$ as recorded in Lemma \ref{kt5}. If $n$ is odd, then the result follows from Lemma \ref{kt5}.
 Suppose  that $n$ is even. Then $\ep=+$, and, by \cite[Corollary 5.9]{TZ8}, either $n$ is a 2-power, $|g|=(q^{n/2}+1)/(2,q+1)$ 
and $\deg\phi(g)\geq |g|-1$ or $(n/2, q, |g|) = (3, 2, 9)$; the latter case is ruled out as Sylow 3-subgroups of $\SL_6(2)$ are not cyclic. 
 In the former case     $g$ is contained in a subgroup
$X\cong \Sp_n(q)$ and $X\cong \SO^-_n(q)$, and $q$ is odd  by \cite[Proposition 5.7]{TZ8}.   

\smallskip
Suppose now that $g$ is reducible in $G$; then $G=\SU_n(q)$ and $g$ is \ir in a subgroup $\SU_{n-1}(q)$ of  $  G$. By the above,  
applied to $\SU_{n-1}(q)$ in place of $\SU_n(q)$, we have $p > n-1=d^a$ for some odd prime $d$ and an integer $a>0$,  and the result follows. \enp
 
 
We will show later, see Corollary  \ref{ss2}, that in fact the case with $d=2$ in Proposition 
\ref{zu3} 
does not  occur when Sylow $p$-subgroups of $G$ are cyclic.
We also conjecture that case (iv) does not actually occur 
but our method does not lead to this conclusion. For $n=4$ this is true, see Lemma \ref{uu4}.  For small values of $n>3$ one can use explicit decomposition matrices of $G$ modulo $\ell$ available in \cite{DM15}, see also \cite{TB}.
 
\bl{uu4} Let $G=\SU_4(q)$ and let $g\in G$ be a $p$-element with $3< p|(q^2-q+1)$. Let $1_G\neq \phi\in\Irr_\ell(G)$, where $(\ell q,p)=1$. Then $\deg\phi(g)=|g|$. \el

\bp Note that Sylow $p$-subgroups of $G$ are cyclic. So for $\ell=0$ the result is available in \cite{Z3}. Therefore, the result is true if $\phi$ is liftable. 
We may assume that $g$ is contained in a standard subgroup $H\cong \SU_3(q)$ of $G$.
By \cite[Prop 6.1(ii)]{TZ8}, every non-trivial \ir constituent $\tau$ of $\phi|_H$ is of degree $q^2-q$, in particular, it is a Weil character. In addition, $|g|=q^2-q+1$ and  all $|g|$-roots of unity but 1 are \eis of $\tau(g)$. Suppose the contrary,  that $\deg\phi(g)<|g|$. Then $\phi|_H$ has no trivial \ir constituent, 
and hence $\dim\phi$ is a multiple of $q^2-q$. By \cite[Theorem 2.5]{GMST}, $\phi$ itself  is a Weil character. But then 
$\dim\phi\in\{1,(q^4-1)/(q+1),(q^4+q)/(q+1)\}$ cannot be a multiple of $q^2-q$, a contradiction.\enp

 \section{The case with $G=\Spin_{2n}^-(q)$}

\subsection{Groups $\Spin_{2n}^-(q)$ for $n>4$ odd}

\begin{propo}\label{no3} Let $G=\Spin_{2n}^-(q)$ for $n$ odd, and let $g\in G$ be a $p$-element such that $|g|$ divides $q^n+1$ and 
Sylow $p$-subgroups of $G$ are cyclic. Let $1_G\neq \phi\in\Irr_\ell(G)$. Then $\deg\phi(g)\geq |g|-1$. Moreover,   $\deg\phi(g)=|g|$ unless possibly $n=d^a$ for an odd prime d, $|g|=(q^n+1)/((q^{n/d}+1)\cdot\gcd(d,\ell,q^{n/d}+1))$, and either 
$d \nmid (q^{n/d}+1)$, or $d=\ell \mid (q^{n/d}+1)$.
\end{propo}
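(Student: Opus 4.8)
The plan is to transport the problem into the unitary group $\SU_n(q)$ and then invoke Proposition \ref{zu3}. First I would record two consequences of the hypotheses. Since Sylow $p$-subgroups of $G$ are cyclic and $|g|$ divides $q^n+1$, the prime $p$ is odd and does not divide $q+1$: otherwise $p$ would divide both $q+1$ and several of the factors $q^{2i}-1$ with $i<n$, forcing a non-cyclic Sylow $p$-subgroup. For the same reason $|G|_p=(q^n+1)_p$, so the cyclic Sylow $p$-subgroup of $G$ is contained in the maximal torus $T$ of order $q^n+1$, and thus $g$ is conjugate into $T$.

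The geometric input is the embedding $\SU_n(q)\hookrightarrow\Spin_{2n}^-(q)$, available precisely because $n$ is odd: viewing the Hermitian space $\FF_{q^2}^n$ as an $\FF_q$-space of dimension $2n$ equipped with its trace quadratic form yields a form of minus type when $n$ is odd, so that $\GU_n(q)\leq \mathrm{GO}_{2n}^-(q)$, $\SU_n(q)\leq\Omega_{2n}^-(q)$, and the latter lifts to a subgroup $H\leq\Spin_{2n}^-(q)$ which is a central extension of $\SU_n(q)$; under this embedding $T$ is the image of the order-$(q^n+1)$ Coxeter torus of $\GU_n(q)$. As $p\nmid(q+1)$ and $g$ is a $p$-element of $T$, the element $g$ lies (up to conjugacy) in $H$ and is a $p$-element of $\SU_n(q)$ of the same order. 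Because $p$ is coprime to $|Z(\Spin_{2n}^-(q))|$ and to the Schur multiplier of $\SU_n(q)$, the degree of the minimal polynomial of $g$ is unchanged under the central cover, so I may treat $H$ as $\SU_n(q)$ itself.

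Now let $\tau$ be a nontrivial irreducible constituent of $\phi|_H$, which exists because $\phi(g)$ is not scalar ($\ker\phi\leq Z(G)$ has order prime to $p$, while $g$ is non-central of $p$-power order). Since $g$ is $\ell$-regular, $\phi(g)$ is diagonalizable and the eigenvalues of $\tau(g)$ are among those of $\phi(g)$, whence $\deg\phi(g)\geq\deg\tau(g)$. Sylow $p$-subgroups of $\SU_n(q)\leq G$ are cyclic (being subgroups of a cyclic group), so Proposition \ref{zu3} applies to $\tau$ and gives $\deg\tau(g)\geq|g|-1$; hence $\deg\phi(g)\geq|g|-1$, the first assertion. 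If moreover $\deg\phi(g)<|g|$, then $\deg\tau(g)<|g|$, so one of the cases (ii)--(iv) of Proposition \ref{zu3} holds. Case (iii) is excluded since it requires the plus-type (linear) group, while our group is $\SU_n(q)$; and case (iv) is excluded because it forces $|g|\mid q^{n-1}+1$, which together with $|g|\mid q^n+1$ gives that $|g|$ divides $\gcd(q^{n-1}+1,q^n+1)$, a divisor of $2$, hence $g=1$ as $p$ is odd, a contradiction. Thus case (ii) holds: $n=d^a$ for an odd prime $d$, and $|g|=(q^n+1)/((q^{n/d}+1)\cdot\gcd(d,\ell,q^{n/d}+1))$ with the stated divisibility condition, exactly as claimed.

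The main obstacle I anticipate is the precise justification of the embedding chain: that the trace form on $\FF_{q^2}^n$ has minus type for $n$ odd so that $\SU_n(q)$ genuinely lands in $\Omega_{2n}^-(q)$ and lifts into $\Spin_{2n}^-(q)$, that the order-$(q^n+1)$ torus is carried to the Coxeter torus $T$ containing a conjugate of $g$, and the attendant central-cover bookkeeping ensuring the minimal-polynomial degree of the $p$-element $g$ is faithfully reflected at the level of $\SU_n(q)$. Once these identifications are secured, all representation-theoretic content is delegated to Proposition \ref{zu3}. A more self-contained alternative would bypass Proposition \ref{zu3}: for each odd prime divisor $d$ of $n$ use the extension-field subgroup $\Spin_{2d}^-(q^{n/d})\hookrightarrow\Spin_{2n}^-(q)$ (of minus type since $n/d$ is odd) and $\SU_d(q^{n/d})\hookrightarrow\Spin_{2d}^-(q^{n/d})$, apply the base case Proposition \ref{z00} to each $d$, and run the Zsigmondy argument of Lemma \ref{kt5}(a) to force $n$ to be a prime power; this reproduces the same formula for $|g|$.
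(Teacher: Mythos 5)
Your proposal is correct and takes essentially the same route as the paper: the paper's entire proof is the observation that the hypotheses force $G$ to contain a subgroup $H\cong\SU_n(q)$ with $g\in H$ acting irreducibly on $\FF_{q^2}^n$, followed by applying Lemma \ref{kt5} to a nontrivial irreducible constituent of $\phi|_H$. The only difference is that you invoke Proposition \ref{zu3} (which the paper itself deduces from Lemma \ref{kt5}) rather than Lemma \ref{kt5} directly, trading the verification that $g$ is irreducible on $\FF_{q^2}^n$ for your short $\gcd(q^{n-1}+1,q^n+1)\le 2$ argument ruling out cases (iii) and (iv) --- an immaterial variation.
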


\bp Note that the assumptions that $p|(q^n+1)$ and Sylow $p$-subgroups of $G$ are cyclic imply that $G$ contains a group $H\cong \SU_n(q)$ with $g\in H$ being 
irreducible on $\F_{q^2}^n$. Then the result follows from Lemma  \ref{kt5}. 
\enp

\subsection{Groups $G=\Spin^-_8(q)$ with $q$ odd}

\bl{cs8} Let $G=\SO^-_{8}(q)$  with the standard  $\FF_q G$-module $V$, and embed $K = \Omega^-_4(q^2)$ in $G$.
\begin{enumerate}[\rm(i)]
\item Every cyclic subgroup C of order $(q^{4}-1)/(2,q-1)$ in $K$ stabilizes a totally singular subspace $W$ of dimension $ 2$ in $V$. 

\item  If $W'\neq 0$ is a totally singular subspace of  V and  $CW'=W'$ then $\dim W'=2$.

\item  Let $P:=\mathrm{Stab}_G(W)$ and let U be that that the unipotent radical of P. Then $|C_C(Z(U))|\geq (q^2+1)/2$.
\end{enumerate} 
\el

\bp  By the proof of \cite[Proposition 2.9.1(v)]{KL}, $K \cong \PSL_2(q^4)$ is the image of $\SL_2(q^4)$ acting on the orthogonal space 
$V_1 = \FF_{q^2}^4$ in such a way that a  generator $g_1$ of the split torus $C_{q^4-1}$ has spectrum 
$$\{\xi^{q^2+1},\xi^{-q^2-1},\xi^{q^2-1},\xi^{-q^2+1}\}$$
for some $\xi \in \FF_{q^4}^\times$ of order $q^4-1$. It is easy to see that the $\xi^{q^2+1}$-eigenspace $W_1$ for $g$ is totally singular in $V_1$. 
Identifying $V$ with $V_1$ viewed over $\FF_q$ and $C$ with the image of $\langle g_1 \rangle$ in $H$, $W_1$ becomes a totally singular 
$\FF_q$-space of dimension $2$, the desired subspace $W$ in (i).

Furthermore, the $\langle g_1 \rangle$-module $V$ decomposes into irreducible summands as
$W \oplus W_3 \oplus W_2$, where $W_2^\perp = W \oplus W_3$ is non-degenerate of type $+$, $W_3 \cong W^*$, and $W_2  \cong \FF_q^4$ is of type $-$. This proves (ii).

For (iii), one observes that $Z(U)=\{h\in U: (h-\Id)V\subset W\}$. Choose a basis $(b_1\ld b_8)$ of $V$ such that $b_1,b_2\in W$, $b_3,b_4\in W_3$ and $b_5,b_6,b_7,b_8\in W_2$. Then 

\begin{center}$Z(U)=\begin{pmatrix}\Id_2&y&0\\ 0&\Id_2&0\\0&0&\Id_4 \end{pmatrix}$
and $g_1 =\begin{pmatrix}a&0&0\\ 0&{}^ta\up&0\\0&0&b \end{pmatrix}$,\end{center}
where $a\in \GL_2(q)$ has order $q^2-1$, $b \in \GL_4(q)$ and $y$ runs over some set of $(2\times 2)$-matrices over $\FF_q$. 
Hence $g_1^{q^2-1}$ acts trivially on $Z(U)$, and the statement follows.\enp

A group $X\neq 1$ is called {\it of  extraspecial type} if $X'=Z(X)$ coincides with the Frattini subgroup of $X$ and  $Z(X/Z_0)\cong Z(X)/Z_0$ for every proper subgroup $Z_0$ of $Z(X)$. It is explained in \cite[Lemma 3.3]{DZ4}
that if $Z_0$ is a maximal subgroup of $Z(X)$ then $X/Z_0$ is an extraspecial group. Note that $|X|$ is a prime power.
 
\bl{aa1} Let $G=\SO_8^-(q),P$ be as in Lemma {\rm \ref{cs8}}, and let $U$ be the unipotent radical of $P$. Then $U$  is of extraspecial type. In addition, if $N$ is a normal subgroup of $U$ then either $N\subset Z(U)$ or
 $Z(U)\subseteq N$.\el

\bp These facts are well known, see for instance   \cite[Lemma 3.8 and Lemma 3.12]{DZ4}.\enp

\bl{eg1} Let E be a group of extraspecial type and A an abelian subgroup of E. Let $\lam\in\Irr_{\ell}(E)$. Suppose that   $\dim\lam>1$ and  $A\cap Z(E)=1$. Then $\lam|_A$ is a multiple of $\rho_A^{reg}$, the regular \rep of A.   In particular, $1_A$ is a constituent of $\lam|_A$.\el

\bp   Let $\chi$ be the Brauer character of $\lam$. As $\chi(1)>1$,  we have $(\ell, |E|)=1$. Since $E'\leq Z(E)$, it follows  that
 $\chi(x)=0$ for every $x\in E\setminus Z(E)$. As $A\cap Z(E)=1$, we have $\chi(a)=0$ for every $1\neq a\in A$. This implies the result.  \enp

 \bl{dn8} Let $G=\Spin^-_{8}(q)$, $q$  odd. Let $g\in G$ be a p-element such that $p|(q^4+1)$, $p\neq 2,\ell$.  Let $1_G\neq \phi\in\Irr_\ell(G)$, where $\ell\neq p$. Then $\deg\phi(g)=|g|$. \el

 \bp 
 Suppose the contrary, that $\deg\phi(g)<|g|$. As usual, we may assume that 
$\langle g \rangle  $  is a \syl of $G$.
As mentioned in Lemma \ref{cs8},    $\SO^-_8(q)$  contains a subgroup  isomorphic to $\PSL_2(q^{4})$. Then $G$ contains a subgroup $X$, say, such that $Z(G)\subset X$ and $X/ Z(G)\cong \PSL_2(q^{4})$. Set $H=X'$, the derived subgroup of $X$. 
Then either $H\cong \SL_2(q^{4})$ or $H\cong \PSL_2(q^{4})$. Since $|H|_p=|G|_p$, 
by Sylow's theorem we may assume that $g\in H$.

 Let $\tau$ be a nontrivial  \ir \ccc
of $\phi|_H$. Then $\deg\tau(g)<|g|$. By \cite[Lemma 3.3]{TZ8}, 
 $|g|=(q^4+1)/2$ and $\deg\tau(g)=\dim\tau=(q^4-1)/2$ and
  $1$ is not an \ei of $\tau(g)$. Therefore, $1_H$ is not a constituent of $\phi|_H$. So every \ir \ccc of $\phi|_H$ is of dimension $\dim\tau=(q^4-1)/2$. 

Note that  $H$ has exactly two \ir Brauer characters of degree $(q^4-1)/2$. Let
$A$ be a $\langle g \rangle$-invariant maximal unipotent subgroup of $H$. It is well known that  $1_A$ is not
an \ir \ccc of $\tau|_A$ (one can use the characters of $H$ of degree $(q^4-1)/2$ to see this). \itf $1_A$ is not an \ir \ccc of $\phi|_A.$

We show that this leads to a contradiction.

Let $B=N_H(A)$.    
By the Borel-Tits theorem \cite[Theorem 3.1.3]{GLS}, there is a parabolic subgroup $P$ of $G$ such that $B\subseteq  P$ and $A\subset  U$, where $U$
is the unipotent radical of $P$. As $g \in B$,  
by Lemma  \ref{cs8}(iii), $P$ is the stabilizer in $G$ of 
a 2-dimensional totally singular subspace $W$ of $V$, the natural  $\FF_qG$-module. Therefore, $U$ is a group of extraspecial type by Lemma \ref{aa1}. Then we are left to show that $A\cap Z(U)=1$ in view of Lemma \ref{eg1}.  

For this observe that the action of $g$ (of order $(q^4-1)/2$) on $A$ by conjugation has $2$ orbits of size $(q^4-1)/2$. If
 $A\cap Z(U)\neq 1$ then $A\cap Z(U)$ contains one or  two orbits of size $(q^4-1)/2$,
and hence $g$ acts faithfully on $A$, hence on $Z(U)$. 
This contradicts Lemma \ref{cs8}(iii). \enp

\subsection{Groups $G=\Spin^-_8(q)$ with $q$ even}

As $q$ is even, $G$ has a faithful \ir \rep of degree $8$, and, as a subgroup of 
$\GL_{8}(F)$, it is  realized as the derived subgroup  of  $O_{2n}^-(q)$, 
hence it  has index 2 in the latter group. In fact, $G={\mathbf G}^{\F}$, 
where ${\mathbf G}$ is a connected algebraic group of type 
$D_4$,  and $\F$ is a certain Steinberg  endomorphism of ${\mathbf G}$.

We start with the \f useful observation:

 \bl{44q} Let $G=\Spin_8^-(q)$, q even,  $g\in G$ be an \ir $p$-element  and let $\phi \in\Irr_\ell(G)$ be a non-trivial \irr of $G$, $\ell\neq p$. Suppose that 
$\deg\phi(g)<|g|$. Then $|g|=q^4+1$,  $\dim\phi(1)$ is a multiple of $q^4$ and  $1$ is not an eigenvalue of $\phi(g)$. \el

\bp By \cite[Proposition 5.2(iv)]{TZ8}, $|g|$ divides $q^4+1$ and $g$ is contained in a subgroup  $H\cong \SL_2(q^4)$. By \cite[Lemma 3.3]{TZ8},
 $|g|=q^4+1$ and, if $\tau$ is a non-trivial \ir \ccc of $\phi|_H$, then either $\dim\tau=q^4$ and $1\not\in\spec \phi(g)$ or $\dim\phi=q^4-1$ and $\deg\phi(g)=|g|-2$. In the latter case  $\eta,\eta\up\notin\spec \phi(g)$, where $\eta\neq 1$ is some primitive $|g|$-root of unity. We show that this case does not in fact occur. Suppose the contrary. Then $|N_G(T)/T|=4$ by \cite[Lemma 3.6(3)]{z14}. (In \cite[Lemma 3.6(3)]{z14}  $G=\SO^-_{8}(q)$, however, it is clear from the context there that the lemma is applied to $G={\mathbf G}^{\F}$, where ${\mathbf G}$ is a  simple simply connected algebraic group of type $D_4$.) \itf $g$ is conjugate to 4 distinct elements of shape $g^i$, and hence some $i$ differs from $\pm 1$.
Therefore, $\eta^i\neq \eta^{\pm 1}$ is an \ei of $\phi(g)$. This implies $\deg\phi(g)=|g|$, a contradiction.\enp

\bl{qe4} Let $G=\Spin_8^-(q)$, q even,  $g\in G$ be an \ir $p$-element  and let $1_G\neq \phi\in\Irr_\ell(G)$, $\ell\neq p$. Then  $\deg\phi (g)=|g|$.  \el

\bp By Lemma \ref{44q}, $|g|=q^4+1$, so $g$ is a generator of a cyclic maximal torus $S$ of $G$ of order $q^4+1$. It is well known that $G$ is a simple group and that every element $1\neq t\in S$ is regular. So we can use the results of \cite[\S 3]{TZ20}. By  \cite[Lemma 3.6]{TZ20}, either $\phi$ is unipotent or $\phi(y)=0$ for every $1\neq y\in S$ and the result follows, or $\phi$ is liftable. In the latter case $\deg\phi(g)=|g|$ by \cite{Z3}.

We use \cite{DuM}, where the unipotent characters   
are labelled by the \ir characters of Weyl group of type $D_4$, that is, by unordered pairs of partition of 4. (See \cite[Section 13.2]{C} for the general setting.)

We use the decomposition numbers modulo $\ell$ for unipotent characters of $G$. As $p\neq \ell$ and $|g|=q^4+1$, we have $(\ell,q^4+1)=1$ here. So the cases to be considered are (i) $\ell|(q-1)$, (ii) $\ell|(q+1)$, (iii) $\ell|(q^2+1)$, (iv) $\ell|(q^2-q+1)$, (v) $\ell|(q^2+q+1)$. 
Note that the table of unipotent characters for $G$ is available in Chevie and also at L\"ubeck's home page \cite{Lu}. 

\med (i) {\bf The case $\ell|(q-1).$} By \cite[Satz 6.3.7(3)]{H1}, every unipotent Brauer character is liftable provided $\ell$ is coprime to the order of the Weil group $W$ of the simple algebraic group of type $D_4$. If $\be$ is liftable, the result follows from \cite{Z3}. As $\ell\neq 2$ and $|W|=2^3.4!$, we are left with $\ell=3$.   Gerhard Hiss (a private communication) deduced from   \cite{GH} an explicit shape of the decomposition matrix for $\ell=3$, see Table 1.

 \begin{center}{Table 1: The $3$-decomposition matrix for unipotent characters of $G=D^-_4(q)$  }

 \vspace{10pt}
\begin{tabular}{|c|c|c|c|c|c|c|c| c|}
\hline
character  degree&label& $\be_1$ & $\be_2$   & $\be_3$ & $\be_4$ & $\be_5$ &$\be_6$ &$\be_7$   \\
\hline $ 1$&$3.$& $1$ & $0$   & $0$ & $0$ & $0$& $0$& $0$\\  
 \hline
$q(q^4+1)$&$21.$&$1$   & $1$&$0$ & $0$ & $0$& $0$& $0$\\ 
 \hline
$(q^3/2)(q^2-q+1)(q^4+1)$&$1^3.$& $0$   &$1$ & $1  $   & $0$ & $0$& $0$& $0$  \\
  \hline
$q^2(q^4+ q^2+1)$&$2.1$& $0$   &$0$ &  $1$ & $0$ & $0$& $0$& $0$  \\
  \hline
$(q^3/2)(q^2+q+1)(q^4+1)$&$1^2.1  $& $0$   &$ 0$ &  $0$ &$1$& $0$& $0$& $0$   \\
  \hline
$(q^3/2)(q^2+q+1)(q^4+1)$&$1.2$& $0$   &$0$ &  $0$ &$1$& $0$& $0$& $0$   \\
  \hline
$q^6(q^4+q^2+1)$&$1.1^2  $& $0$   &$0$ &  $0$ &$0$& $1$& $0$& $0$   \\
  \hline
$(q^3/2)(q^2-q+1)(q^4+1)$&$.3  $& $0$   &$0$ &  $0$ &$0$& $1$& $0$& $0$   \\
\hline
$q^7(q^4+1)$&$.21$& $0$   &$0$ &  $0$ &$0$& $1$& $1$& $0$   \\
\hline
$q^{12}$&$ .1^3  $& $0$   &$0$ &  $0$ &$0$& $0$& $1$& $1$   \\
\hline
\end{tabular}
\end{center}

So the non-liftable characters are $\be_2,\be_6$ and $\be_7$. We have 
$$\be_2=\chi_{2.}-1_G,~\be_6=\chi_{.21}-\chi_{.3},~\be_7=\chi_{.1^3}-\be_6=\chi_{.1^3}-\chi_{.21}+\chi_{.3},$$ 
whence
$$\begin{array}{lll}\be_2(1)& =q^5+q-1 & \equiv -1\pmod{q},\\  
   \be_6(1) & =q^{11}+q^7-(q^3/2)(q^2-q+1)(q^4+1)& \equiv (q^4 -q^3)/2\pmod{q^4},\\
   \be_7(1) & =q^{12}-q^{11}+q^7+(q^3/2)(q^2-q+1)(q^4+1)& \equiv (q^4  -q^3)/2\pmod{q^4}.\end{array}$$ 
These contradict  Lemma \ref{44q}. 

This states that  $\deg\phi(g)<|g|$ implies $q^4|\dim\phi$. We show that if $\phi$ is not liftable then $\dim\phi$ is not a multiple of $q^4$.

\med
(ii) {\bf The case $\ell|(q+1).$} The decomposition matrices of $G=D_{4}^-(q)$ for this case are determined in \cite[\S 4, p.36, Table 10]{DuM}.    If $q=2$ then $\ell=3$
and hence $\dim\phi\equiv 0\pmod{16}$. By inspection of the \ir Brauer character degrees in \cite[p. 244]{JLPW}, this condition is satisfied only for $\phi$ of degree 2464. As the Brauer character value of $\phi$ at $g$ equals $-1$, one easily deduces a contradiction. So we assume 
$q>2$ in the subsequent analysis of the case where $\ell|(q+1)$.   


Non-liftable $\ell$-modular \ir characters are 

\med
$\be_1(1)=q^2\Phi_3(q)\Phi_6(q)-q\Phi_8(q)=q^2(q^4+q^2+1)-q(q^4+1)\equiv -q\pmod{q^2};$ 

\med
$\be_2(1)=\frac{1}{2}q^3\Phi_6(q)\Phi_8(q)-1 \equiv -1\pmod{q}$, 

\med
$\be_3(1)=\frac{1}{2}q^3\Phi_6(q)\Phi_8(q)-\be_1(1)
\equiv q\pmod{q^2};$ 

\med
$\be_4(1)=\frac{1}{2}q^3\Phi_3(q)\Phi_8(q)-q^2\Phi_3(q)\Phi_6(q)=q^2(q^2+q+1)[ (q/2)(q^4+1)-(q^2-q+1)]\equiv  -q^2\pmod{q^3/2};$ 

\med
 $\be_5(1)=\frac{1}{2}q^3\Phi_3(q)\Phi_8(q)-1\equiv  -1\pmod{q};$

\med
 $\be_6(1)=q^6\Phi_3(q)\Phi_6(q)-\frac{q^3}{2}\Phi_3(q)\Phi_8(q)+1-\frac{q^3}{2}\Phi_6(q)\Phi_8(q)-\al (\frac{q^3}{2}\Phi_6(q)\Phi_8(q)-q^2\Phi_3(q)\Phi_6(q)\\ \equiv   1\pmod q$ (here $0\leq \al\leq 2$);   

\med
$\be_7(1)=q^7\Phi_8(q)-q^6\Phi_3(q)\Phi_6(q)+\frac{1}{2}q^3\Phi_6(q)\Phi_8(q)=q^7(q^4+1)-q^6(q^4+q^2+1)+(q^3/2)(q^2-q+1)(q^4+1)\equiv (q^3/2)\pmod{q^3}$; 

\med
$\be_8(1)=q^{12}-\al\be_7(1)-\be_4(1)-\be_3(1)-\be_1(1)$, where $0\leq \al\leq 2$. Here $\be_7(1)\equiv 0\pmod{q^3/2} $, $\be_4(1)\equiv -q^2\pmod{q^3/2}$, $\be_3(1)\equiv  -q^2\pmod{q^3/2}$, $\be_1(1)\equiv q^2-q\pmod{q^3/2}$. So $\be_8(1)\equiv  q^2+q\pmod{q^3/2}$. 

\med
 So $\be_i(1)$ is not a multiple of $q^4$ for $i=1\ld 8$  and $q>2$, whence the result in this case.

\med
We are left with the cases where  $(\ell,q^2-1)=1$, hence Sylow $\ell$-subgroups are cyclic.
In all these cases we have a contradiction by Lemma \ref{44q}.

\med
(iii) {\bf The case $\ell|(q^2+1)$.} 
In this case there are 5 non-liftable unipotent Brauer characters which we denote by $\be_1,\be_2,\be_3,\be_4,\be_5.$ By  \cite{TB}, and Fong-Srinivasan \cite[Section 8]{FS2}, we have

 \med
$\be_1(1)=\chi_{.3}(1)-1=q^3\Phi_6(q)\Phi_8(q)/2-1=(q^3/2)(q^2-q+1)(q^4+1)-1$ is odd;

\med
$\be_2(1)=\chi_{1.1^2}(1)-\chi_{1^2.1}(1)= q^6(q^4+q^2+1)-(q^3/2)(q^2+q+1)(q^4+1)=-q^3/2\pmod{q^3};$


\med
$\be_3(1)=\chi_{.21}(1)-\chi_{21.}(1)=q^7(q^4+1)-q(q^4+1)=q(q^4+1)(q^6-1)\equiv -q \pmod{q^4}$; 

\med $\be_4(1)=\chi_{.1^3}(1)-\chi_{1^3.}(1)=q^{12}-(q^3/2)(q^2-q+1)(q^4+1)\equiv -q^3/2\pmod{q^3}$ and 

\med $\be_5(1)=\chi_{1.2}(1)-\chi_{2.1}(1)=(q^3/2)(q^2+q+1)(q^4+1)-q^2(q^4+q^2+1)\equiv -q^2\pmod{q^3/2}$.

\med
(iv) {\bf The case $\ell|(q^2+q+1)$.}   (In this case (and in (E) below)   
we can assume   $\ell\geq 7$ as $q^2\pm q+1\not\equiv 0\pmod{5}$ and $3|(q^2-1).$) 
Here there are 4 non-liftable unipotent Brauer character which we denote by $\be_1,\be_2,\be_3,\be_4.$ By \cite[p. 62, Prop. 5.8]{DuM}, we have

\med $\be_1(1)=\chi_{21.}(1)-1=q(q^4+1)-1$ is odd, as well as 

\med $\be_2(1)=\chi_{1^3.}(1)-\be_1(1)=\chi_{1^3.}(1)-\chi_{21.}(1)+1=(q^3/2)(q^2-q+1)(q^4+1)-q(q^4+1)+1$; 

\med $\be_3(1)=\chi_{.21}(1)-\chi_{.3}(1)=q^7(q^4+1)-(q^3/2)(q^2-q+1)(q^4+1)\equiv -q^3/2 \pmod{q^3}$ and  

\med
$\be_4(1)=\chi_{.1^3}(1)-\be_3(1)=\chi_{.1^3}(1)-\chi_{.21}(1)+\chi_{.3}(1)=q^{12} -q^7(q^4+1) +(q^3/2)(q^2-q+1)(q^4+1)\equiv q^3/2\pmod{q^3}$. 
 
\med
(v) {\bf The case $\ell|(q^2-q+1)$.}  
In this case there are 4 non-liftable unipotent Brauer character which we denote by $\be_1,\be_2,\be_3,\be_4.$ By \cite[p. 79, Table 10]{DuM}, we have

\med
 $\be_1(1)=\chi_{1.2}(1)-1=q^2(q/2)(q^2+q+1)(q^4+1)-1$ is odd, as well as

\med
$\be_2(1)=\chi_{.21}(1)-\be_1(1)=\chi_{.21}(1)-\chi_{1.2}(1)+1=q^7(q^4+1) -(q^3/2)(q^2+q+1)(q^4+1)+1;$ 

\med
$\be_3(1)=\chi_{1^2.1}(1)-\chi_{21.}(1)=(q^3/2)(q^2+q+1)(q^4+1)-q(q^4+1)\equiv -q\pmod{q^2};$  

\med
$\be_4(1)=\chi_{.1^3}(1)-\be_3(1)=\chi_{.1^3}(1)-\chi_{1^2.1}(1)+\chi_{21.}(1)=q^{12}-(q^3/2)(q^2+q+1)(q^4+1)+q(q^4+1)\equiv q\pmod{q^2}$. \enp

\subsection{General case}
 
We first recall a result of \cite[Lemma 5.10]{TZ8}:

\bl{rc3}   Let $G=\Spin^-_{4m}(q)$, $m>1$, and let $g\in G$ be an \ir p-element. Let  $\phi\in\Irr _\ell (G)$. Suppose that $1<\deg\phi(g)< |g|$. 
Then  $p>2$, $ |g|=(q^{2m}+1)/(2,q-1)$, m is a $2$-power, $\deg\phi(g)= |g|-1$ and Sylow p-subgroups of $G$ are cyclic.\el

In this section we prove Theorem  \ref{mm2} for $G=\Spin^-_{2n}(q)$, and  
we improve the result of Lemma \ref{rc3} by excluding the option with $\deg\phi(g)= |g|-1$. 

\bl{dne} Let $G=\Spin_{2n}^-(q)$, $n\geq 4$,  and let $g\in G$ be a   
$p$-element acting irreducibly on V, the natural $\FF_qG$-module.
Let  $1_G\neq \phi\in\Irr_\ell G$, $\ell\neq p$. Suppose that  Sylow $p$-subgroups of $G$ are cyclic. Then $\deg\phi(g)\geq |g|-1$, and either $\deg\phi(g)=|g|$ or n is an odd prime power and $n<p$.\el

\bp  
As $g$ is irreducible, by \cite[Proposition 5.2]{TZ8} we have $p>2$ and $g^{q^n+1}=1$. Since  \syls are cyclic, $p$ is a primitive prime divisor of $q^{2n}-1$, whence 
$p>n$ and $p \nmid (q+1)$. Assume the contrary: $1 < \deg\phi(g) < |g|$.
Then we can assume that  $g$ is a generator of a \syl of $G$.

If $n$ is odd  then $g$ is contained in a subgroup $H\cong \SU_n(q)$ and $g$ is \ir on the natural module for $H$, see \cite[Proposition 5.2(iv)]{TZ8}.
So $\deg\phi(g)\geq |g|-1$ by Lemma \ref{kt5}  applied to an \ir \ccc of $\phi|_H$ of dimension greater than 1. 

Suppose that $n$ is even.  Then, by Lemma \ref{rc3},  
$|g|=(q^n+1)/(2,q+1)$ and $n$ is a 2-power. If $n=4$ then the result holds by Lemmas \ref{dn8} (for $q$ odd) and \ref{qe4} (for $q$ even). 

Let $n>4$.  By Huppert \cite[Hilfsatz 1, d) and Bemerkung prior to Satz 4]{Hu}, $Y:=\SO_8^-(q^{n/4})$ is isomorphic to a subgroup of 
$X=\SO^-_{2n}(q)$. Then $G$ contains a subgroup $H$ isomorphic to $\Spin^-_{8}(q^{n/4})/Z$, where $Z$ is a subgroup of $Z(G)$. 
By Sylow's theorem, we can assume that $g\in H$.
The result follows by applying the obtained result  for $H$  
to a non-trivial \ir \ccc of $\phi|_H$.
\enp

\bl{o37}   
Let  $G=\Spin_{7} (q)$, q odd, or $G= \Spin^+_8(q)$, and let $g\in G$ be a semisimple $p$-element whose order divides $q^2-q+1$, $p>3$. Let  $1_G\neq \phi\in\Irr_\ell G$  and $\ell\neq p$. Then Sylow p-subgroups of G are cyclic and $\deg\phi(g)=|g|$.\el

\bp In both the cases a \syl is contained in a subgroup $H$ of $G$ isomorphic to $\Spin^-_6(q)\cong \SU_4(q)$. So the result follows by applying Lemma \ref{uu4} to a non-trivial \ir \ccc of $\phi|_H$.\enp

\begin{propo}\label{zo1} Let $n\geq 3$ and $G\in\{\Spin_{2n}^- (q)$, $\Spin_{2n+1} (q)$, $ \Spin_{2n+2}^+ (q)\}$.  Let $1_G\neq \phi\in\Irr_\ell G$ and let 
$V$ be the natural $\FF_q G$-module.  
Let p be a prime such that a \syl of G is cyclic, $\ell\neq p$, and let $g\in G$ be  a p-element. 
Then either $\deg\phi(g)=|g|$, or $n>3$ is an odd  prime power, $p > n$, and $\deg\phi(g)\geq   |g|-1$.  Moreover,  if $\deg\phi(g)=|g|-1$ then one of the following holds: 

\begin{enumerate}[\rm(i)]
\item  g is \ir on  V and $G=\Spin_{2n}^- (q);$ 

\item $\dim C_V(g)=1$,  $G=\Spin_{2n+1} (q)$ and g is \ir on $C_V(g)^\perp;$

\item $\dim C_V(g)=2$,  $G=\Spin_{2n+2}^+ (q)$ and g is \ir on $C_V(g)^\perp$. 
\end{enumerate}
\end{propo}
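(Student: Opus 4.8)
The plan is to reduce all three families to the situation of Lemma \ref{dne}, by showing that a generator of a cyclic Sylow $p$-subgroup must act on the natural module $V$ with a single irreducible minus-type constituent of dimension $2n$ and an anisotropic fixed space of dimension $0$, $1$, or $2$ according to the family. First I would reduce to the case where $g$ generates a Sylow $p$-subgroup $P$ of $G$: if the statement holds for such $g$, then every proper power $h$ of $g$ has its eigenvalues among those of $\phi(g)$, already exhausting all $|h|$-th roots of unity, so $\deg\phi(h)=|h|$. Next, if $g$ lies in a parabolic subgroup of $G$, then $\deg\phi(g)=|g|$ by \cite[Theorem 1.1]{DZ1} and \cite{DZ1a}, the exceptional configurations there having non-cyclic Sylow $p$-subgroups. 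So from now on I may assume $g$ generates $P$ and lies in no parabolic subgroup.

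The next step is a module-theoretic analysis. Since $p\neq r$, the element $g$ is semisimple, so $V$ splits into irreducible $\FF_q\langle g\rangle$-summands, each of which is either non-degenerate or totally singular (its radical being $g$-invariant). A totally singular summand would place $g$ in the stabilizer of a totally singular subspace, i.e. in a parabolic subgroup, which is excluded; hence every summand is non-degenerate and $V=C_V(g)\perp W$ with $W:=C_V(g)^\perp$. If $C_V(g)$ contained a singular vector --- which happens whenever $\dim C_V(g)\geq 3$, or $\dim C_V(g)=2$ of $+$ type --- then $g$ would fix the isotropic line it spans, again forcing $g$ into a parabolic subgroup. Thus $C_V(g)$ is anisotropic, so $\dim C_V(g)\leq 2$ with a $2$-dimensional fixed space necessarily of $-$ type; comparing with the type and parity of $V$ in each family (so that $W$ comes out of $-$ type) pins down $\dim C_V(g)=0,1,2$ and yields $\dim W=2n$ throughout. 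I would then use the cyclicity of $P$ to show $g$ acts irreducibly on $W$: writing $W$ as a perpendicular sum of non-degenerate irreducible $g$-summands of dimensions $2m_i$ of $-$ type, each forces $p$ to be a primitive prime divisor of $q^{2m_i}-1$ (as in \cite[Proposition 5.2]{TZ8}), so all $m_i$ are equal, and two or more summands would produce a non-cyclic $p$-subgroup inside the corresponding $\Omega^-_{2m_1}(q)\times\Omega^-_{2m_1}(q)$, a contradiction. Hence $W$ is irreducible of dimension $2n$, $p$ is a primitive prime divisor of $q^{2n}-1$ (so $p>n$), and the alternatives (i)--(iii) for the action of $g$ are established.

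Finally I would bound $\deg\phi(g)$. In each family the pointwise stabilizer of $C_V(g)$ acting on $W$ is a copy of $\Omega^-_{2n}(q)$, whose preimage in $G$ is a subgroup $H$ that is a central cover or quotient of $\Spin^-_{2n}(q)$; since $p$ is odd and coprime to $|Z(\Spin^-_{2n}(q))|$, the element $g$ lies in $H$ and acts irreducibly on its $2n$-dimensional natural module. Since $g\notin Z(G)$ and $\phi$ is irreducible, $\phi(g)$ is non-scalar, so $\deg\phi(g)>1$ and some irreducible constituent $\tau$ of $\phi|_H$ has $\deg\tau(g)>1$; moreover $\deg\phi(g)\geq\deg\tau(g)$ because the eigenvalues of $\tau(g)$ form a subset of those of $\phi(g)$. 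For $n\geq 4$, Lemma \ref{dne} applied to $\tau$ gives $\deg\tau(g)\geq|g|-1$, with equality to $|g|$ unless $n$ is an odd prime power (recall $p>n$ already holds); in the equality case all $|g|$-th roots of unity occur as eigenvalues of $\tau(g)$, hence of $\phi(g)$, so $\deg\phi(g)=|g|$. This yields $\deg\phi(g)\geq|g|-1$ in all cases, with $\deg\phi(g)<|g|$ forcing $n$ to be an odd prime power. For $n=3$, where $|g|$ divides $q^2-q+1$, I would instead invoke Lemma \ref{uu4} (via $H\cong\Spin^-_6(q)\cong\SU_4(q)$) in family (i) and Lemma \ref{o37} (for $G=\Spin_7(q)$ with $q$ odd, or $G=\Spin^+_8(q)$) in families (ii) and (iii), obtaining $\deg\phi(g)=|g|$ and showing that the value $|g|-1$ can arise only when $n>3$.

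The main obstacle I anticipate is the module-theoretic reduction in the middle paragraph: deducing from the cyclicity of $P$ alone, together with the non-parabolic hypothesis, that $g$ acts irreducibly on the whole of $C_V(g)^\perp$, that this space is of $-$ type and of dimension exactly $2n$, and then correctly identifying the overgroup $H$ as a central cover of $\Spin^-_{2n}(q)$ inside the spin group. This requires careful bookkeeping of the center and of the passage between $\SO$, $\Omega$, and $\Spin$, together with attention to the characteristic-two case of $\Spin_{2n+1}(q)\cong\Sp_{2n}(q)$, where the natural module carries a radical and the analysis must be adjusted. Once the action of $g$ is pinned down, the remaining steps are routine applications of the cited lemmas.
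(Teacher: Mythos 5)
Your proposal is correct and follows essentially the same route as the paper's proof: reduce to a Sylow generator, dispose of the parabolic case via \cite[Theorem 1.1]{DZ1} and \cite{DZ1a}, use the non-parabolic hypothesis and cyclicity of the Sylow $p$-subgroup to show $C_V(g)$ is anisotropic of dimension at most $2$ with $g$ irreducible on the minus-type complement $C_V(g)^\perp$ of dimension $2n$, and then feed a nontrivial constituent of $\phi$ restricted to the pointwise stabilizer of $C_V(g)$ (a central cover/quotient of $\Spin^-_{2n}(q)$) into Lemma \ref{dne}, with Lemmas \ref{o37} and \ref{uu4} covering $n=3$. The only notable difference is organizational: the paper splits into the cases $C_V(g)=0$ and $C_V(g)\neq 0$, whereas you run a unified decomposition argument and, in addition, explicitly route the $n=3$, $G=\Spin^-_6(q)\cong\SU_4(q)$ case through Lemma \ref{uu4}, a point the paper leaves implicit.
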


\bp   Note that $p>3$ as \syls of $G$ are cyclic. 
Suppose the contrary that $\deg\phi(g) < |g|$. Then, as usual, we can assume that  $g$ is a generator of a \syl of $G$. 

If $g$ lies in a parabolic subgroup of $G$ then the result follows from \cite[Theorem 1.1]{DZ1} (the correction \cite[Theorem]{DZ1a} to \cite{DZ1} does not concern with these groups).  We will therefore assume that $g$ lies in no parabolic subgroup of $G$. Hence $C_V(g)$ contains no singular vector, which implies that $j:=\dim C_V(g)\leq 2$, and if  $j=2$ then $C_V(g)$ is non-degenerate 
of type $-$. 

\smallskip
(a) Suppose first that $C_V(g)=0$. Then, by Maschke's theorem,  $V$ is an orthogonal sum  of 
 $g$-stable subspaces, and   $g$ acts irreducibly on each of them. Each of them, say $W$,  is non-degenerate. (Indeed, the 
 radical of the associated bilinear form restricted to $W$ is $g$-invariant. So if it is zero, then it must be equal to $W$. Now the radical $R$ of the 
 associated quadratic form $Q$ restricted to $W$ has codimension $\leq 1$ and is $g$-invariant. As $g$ is not contained in any parabolic subgroup,
 $R=0$, and $\dim W=1$. As $|g|$ is odd, $W\subseteq C_V(g)$, a contradiction.)
Now, if $W\neq V$ then $V=W\oplus W^\perp$, $gW^\perp=W^\perp$, and $g$ acts nontrivially on both $W$ and $W^\perp$  which  implies that Sylow $p$-subgroups are not cyclic, a contradiction.
So $g$ acts irreducibly on $V$, whence $G=\Spin^-_{2n}(q)$ and $g^{q^n+1}=1$ by \cite[Proposition 5.2]{TZ8}.
Applying Lemma \ref{dne} we arrive at (i). 
 
\smallskip 
(b) Suppose that $C_V(g)\neq 0$, so that $j \in \{1,2\}$. Since $g$ is semisimple, we can write $V=C_V(g)\oplus V_1$ with $V_1 = C_V(g)^\perp$ being $g$-invariant and $C_{V_1}(g)=0$. As shown in (a), $V_1$ is non-degenerate, of type $-$ and even dimension $2m$, $g$ acts irreducibly on $V_1$, and $g^{q^m+1}=1$. In particular, $\dim V = 2m+j$, and so $m \geq 3$. As $C_V(g)$ is either of dimension $1$, or of type $-$ and dimension $2$, we have that $m=n$, and either $G=\Spin_{2n+1}(q)$ or 
$G= \Spin_{2n+2}^+(q)$

If $n=3$ then $G=\Spin_7(q)$ or $\Spin_8^+(q)$, and $\deg\phi(g)=|g|$ by Lemma \ref{o37}. Suppose  that $n \geq 4$, and let 
 $H:=\{x\in G \mid xv=v \mbox{ for every }v\in C_V(g)\}.$ 
 Then  $g\in H'$ as $Z(G)$ is a 2-group as well as  $H/H'$. Moreover, $H'\cong \Spin(V_1)/Z$ for some central subgroup $Z$ of $Z(G)$. 
 As $\deg\phi(g)<|g|$, $\deg\tau(g)<|g|$ for every non-trivial \ccc  $\tau$ of $\phi|_{H'}$. Applying 
Lemma \ref{dne}, we arrive at (ii), respectively (iii).
\enp

\section{Symplectic groups}

We recall the result of \cite[Proposition 5.7]{TZ8}:

\begin{propo}\label{tt4} Let $G=\Sp_{2n}(q)$, $n>1$,  and let $g\in G$ be an irreducible $p$-element. Let $\phi\in  \Irr_\ell G$   with $\dim\phi>1$ and $(\ell,q)=1$.  Suppose that $\deg\phi(g)<|g|$.
Then  $p>2$,  $|g|=o(g)=(q^n+1)/(2,q+1)$ and one of the \f holds:

\begin{enumerate}[\rm(i)]

\item q is odd, n is a $2$-power,  $ \deg\phi(g)=|g|-1$ and  Sylow subgroups of G are cyclic;  

\item $(n,q,|g|)=(3,2,9)$ and either $\dim\phi=\deg\phi(g)=7$ or $\ell=3$, $\dim\phi=21$ and $\deg\phi(g)\geq 7;$

\item  $(n,q,|g|)=(2,2,5)$, $\ell=3$ and $\deg\phi(g)=\dim\phi=4.$  
\end{enumerate}
\end{propo}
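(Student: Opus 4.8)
The plan is to prove the proposition by first localizing $g$ in an irreducible maximal torus, then reducing to the rank-one group $\SL_2(q^n)$ via a field-extension subgroup, and finally transporting the resulting eigenvalue data back to $\phi$, in the same spirit as the arguments for Lemma \ref{kt5} and Proposition \ref{zu3}. First I would record the location of $g$: being an irreducible $p$-element it is semisimple, and its eigenvalues on $V\otimes\overline{\FF}_q$ form a single Frobenius orbit of size $2n$, so $g$ lies in a cyclic maximal torus $T$ of order $q^n+1$. Since $T$ is cyclic with unique involution $-I\in Z(G)$ when $q$ is odd, this yields $|g|=o(g)=(q^n+1)/(2,q+1)$. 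The bound $p>2$ is immediate for $q$ even, where $q^n+1$ is odd, and for $q$ odd it will drop out of the rank-one reduction below. As every nontrivial element of $T$ is regular, Lemmas \ref{tr1} and \ref{t45} give that the Sylow subgroups sitting inside $T$ are cyclic precisely when $p$ is a primitive prime divisor of $q^{2n}-1$; the Zsigmondy exceptions are exactly what will produce the exceptional cases (ii) and (iii).

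The engine is the field-extension subgroup $H:=\Sp_2(q^n)=\SL_2(q^n)\le\Sp_{2n}(q)$, in which $g$ remains irreducible of order $(q^n+1)/(2,q+1)$. If $\deg\phi(g)<|g|$, then some $|g|$-th root of unity $\zeta$ is absent from $\spec\phi(g)$, hence absent from $\spec\tau(g)$ for every constituent $\tau$ of $\phi|_H$, so $\deg\tau(g)<|g|$ for each nontrivial $\tau$. Applying \cite[Lemma 3.3]{TZ8} (the $A_1$ case) to each such $\tau$ forces $q$ odd and $|g|=(q^n+1)/2$, and shows that the only eigenvalue a nontrivial constituent can omit on $g$ is $1$; consequently $\zeta=1$, the trivial character $1_H$ does not occur in $\phi|_H$, every constituent is a Weil constituent of degree $(q^n-1)/2$, and therefore $\deg\phi(g)=|g|-1$ with $(q^n-1)/2 \mid \dim\phi$. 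Conceptually, the resulting dichotomy $\deg\phi(g)\in\{|g|,|g|-1\}$ for a Weil $\phi$ also follows from the Hall--Higman--Shult theorem (Theorem \ref{hhs}) applied to the action of $g$ on the extraspecial $r$-group underlying the Weil representation, with the roles of $p$ and $r$ in Theorem \ref{hhs} interchanged. For $q$ even the same reduction leaves only constituents of degree $q^n-1$ or $q^n$ with $|g|=q^n+1$, and, combined with \cite[Corollary 5.9]{TZ8}, admits no genuine $\phi$ with $\deg\phi(g)<|g|$ outside the two small groups in (ii) and (iii).

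Next I would upgrade the divisibility $(q^n-1)/2 \mid \dim\phi$ together with the eigenvalue constraint (all $|g|$-th roots but $1$ occurring) to the statement that $\phi$ is itself a Weil representation of $G$. This is the analogue of the step used in Lemma \ref{uu4}: the classification of representations of symplectic groups of small dimension or with small eigenvalue multiplicities \cite[Theorem 2.5]{GMST} forces $\phi$ into the Weil family in all but finitely many small configurations. Granting that $\phi$ is Weil of degree $(q^n-1)/2$, conclusion (i) follows: $q$ is odd, $\deg\phi(g)=|g|-1$, and, $p$ being a primitive prime divisor of $q^{2n}-1$, the Sylow $p$-subgroups are cyclic by the first paragraph.

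The main obstacle, and the step I expect to absorb the most work, is showing that $n$ must be a $2$-power and disposing of the Zsigmondy-exceptional cases. For the $2$-power claim I would argue by induction on $n$: if $n=2^a m$ with $m>1$ odd, then $g$ is still irreducible in the smaller field-extension subgroup $\Sp_{2m}(q^{2^a})$, and the inductive form of the proposition applied there would require ``$m$ a $2$-power,'' a contradiction. The remaining base case $n$ odd is handled by passing instead to $\GU_n(q)\le\Sp_{2n}(q)$ and applying the unitary analysis (Proposition \ref{zu3} and Lemma \ref{kt5}) to the unitary Weil constituents: their order datum $(q^n+1)/\bigl((q^{n/d}+1)\cdot\gcd(d,\ell,q^{n/d}+1)\bigr)$ is strictly smaller than $(q^n+1)/2$, so the constituents cannot realize all $|g|$-th roots of unity but $1$, a contradiction extracted by a primitive-prime-divisor (Zsigmondy) comparison exactly as in the second half of Lemma \ref{kt5}. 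Finally, the failure of cyclicity of Sylow subgroups forces the Zsigmondy exceptions $q=2$ with $n\in\{2,3\}$, and the pairs $(n,q)=(3,2)$ and $(2,2)$ are settled by direct inspection of the character and decomposition data of $\Sp_6(2)$ and $\Sp_4(2)\cong\mathsf{S}_6$, yielding (ii) and (iii). The delicate bookkeeping throughout is to track which constituents carry the eigenvalue $1$, so that ``exactly one missing root of unity'' is correctly pinned to $\zeta=1$ and converted into $\deg\phi(g)=|g|-1$.
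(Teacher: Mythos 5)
First, a structural point: the paper does not prove Proposition \ref{tt4} at all --- it is introduced by ``We recall the result of \cite[Proposition 5.7]{TZ8}'' and is quoted verbatim from that earlier paper, so there is no internal proof to compare yours against. That said, your core reduction for $q$ odd --- place $g$ in the cyclic torus of order $q^n+1$, restrict $\phi$ to the field-extension subgroup $H\cong\SL_2(q^n)$, observe that a root of unity missing from $\spec\phi(g)$ is missing from $\spec\tau(g)$ for every constituent $\tau$, and apply \cite[Lemma 3.3]{TZ8} to force $q$ odd, $|g|=(q^n+1)/2$, the missing eigenvalue to be $1$, and $\deg\phi(g)=|g|-1$ --- is the standard route, and it is the same mechanism this paper itself uses for $\Sp_4(q)$ in Lemma \ref{p48} and for $\Spin_8^-(q)$ in Lemma \ref{dn8}.

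However, as a standalone proof the proposal has genuine gaps. (1) The even-$q$ half, which is where cases (ii) and (iii) live, fails: you invoke \cite[Corollary 5.9]{TZ8}, which concerns $\SL_n(q)$, but the containment goes the wrong way ($\Sp_{2n}(q)\le\SL_{2n}(q)$), so an irreducible representation of $\Sp_{2n}(q)$ is not the restriction of one of $\SL_{2n}(q)$ and that corollary says nothing about $\phi$. For $q$ even, Lemma 3.3 of \cite{TZ8} leaves two branches (constituents of degree $q^n$ with $1$ missing, or of degree $q^n-1$ with a pair $\eta^{\pm1}$ missing); disposing of the second branch requires the $N_G(T)/T$-orbit argument on eigenvalues (as in Lemma \ref{44q}), which fails precisely when $q^n+1$ is a proper prime power --- that is how $(3,2,9)$ arises --- and pinning down $(\ell,\dim\phi)$ in (ii), (iii) requires decomposition-matrix input that ``direct inspection'' must actually perform. (2) Your $n$-odd base case is circular within this paper: the proof of Proposition \ref{zu3} itself quotes \cite[Proposition 5.7]{TZ8}, i.e.\ the statement being proved. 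Lemma \ref{kt5} alone would suffice, but it assumes that Sylow $p$-subgroups of $\SU_n(q)$ are cyclic and you also need $g\in\SU_n(q)$ rather than merely $g\in\GU_n(q)$; both hold, but only because $(q^n+1)/2=p^a$ forces $p$ to be a Zsigmondy primitive prime divisor of $q^{2n}-1$ (hence $p\nmid q+1$, giving $\det g=1$ and cyclic Sylow subgroups) --- an argument you never make, and which is also the correct way to obtain the cyclicity assertion in (i). Your stated route to cyclicity is unsound: it is false that every nontrivial element of $T$ is regular ($-I\in Z(G)\cap T$ for $q$ odd), so Lemmas \ref{tr1} and \ref{t45} do not yield the ``precisely when $p$ is a primitive prime divisor'' claim. (3) The proposed ``upgrade'' to ``$\phi$ is Weil'' via \cite[Theorem 2.5]{GMST} is unnecessary for (i) and is false as stated for $\ell=2$ (see the exceptions recorded in Proposition \ref{gu1}); it should be dropped.
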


 Note that in case (ii) Sylow $p$-subgroups of $G$ are not cyclic. 
Below we  will eliminate case (i) in Proposition \ref{tt4}. To be precise, we prove

\begin{propo}\label{gu1} In  Proposition {\rm \ref{tt4}(i)}, we have $\dim\phi=(q^n-1)/2$ and $\deg\phi(g)=|g|-1$,  unless possibly $(G,|g|,\ell)=(\Sp_8(3),41,2)$,
 or $\ell=2$, $n>2$ and $q-1>2$ is a $2$-power.
\end{propo}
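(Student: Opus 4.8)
The plan is to pin down $\phi$ by restricting it to a field-extension subgroup and then invoking the classification of Weil representations. Using the $\FF_{q^n}/\FF_q$-structure on the natural module $V=\FF_q^{2n}\cong\FF_{q^n}^2$, I would embed $H:=\Sp_2(q^n)=\SL_2(q^n)$ into $G=\Sp_{2n}(q)$; since $g$ is irreducible on $V$ of order $(q^n+1)/2$, it lies in a non-split torus of $H$ and is irreducible there as well. The key point is that, for every irreducible constituent $\tau$ of $\phi|_H$, the minimal polynomial of $\tau(g)$ divides that of $\phi(g)$, so $\deg\tau(g)\leq\deg\phi(g)=|g|-1<|g|$ by Proposition \ref{tt4}(i). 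By \cite[Lemma 3.3]{TZ8} applied to $\SL_2(q^n)$, every nontrivial irreducible $\tau$ of $H$ with $\deg\tau(g)<|g|$ satisfies $\dim\tau=(q^n-1)/2$ (a Weil representation) and $1\notin\spec\tau(g)$. Hence each constituent of $\phi|_H$ is either trivial or a degree-$(q^n-1)/2$ Weil representation acting fixed-point-freely on $g$.

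Next I would rule out the trivial constituent. The degree-$(q^n-1)/2$ Weil representations have spectrum on $g$ equal to all nontrivial $|g|$-th roots of unity, whereas the trivial character contributes the eigenvalue $1$. Thus, if $\phi|_H$ contained both a trivial and a Weil constituent, then $\phi(g)$ would have every $|g|$-th root of unity as an eigenvalue, forcing $\deg\phi(g)=|g|$, contrary to Proposition \ref{tt4}(i). On the other hand $\phi|_H$ cannot consist only of trivial constituents, since $\phi$ is nontrivial and $G$ is quasisimple with $H\not\leq Z(G)$. Therefore every constituent of $\phi|_H$ is a degree-$(q^n-1)/2$ Weil representation; in particular $1\notin\spec\phi(g)$ and $(q^n-1)/2$ divides $\dim\phi$.

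For $\ell\neq 2$ I would then invoke the classification of small-degree cross-characteristic representations, exactly as \cite[Theorem 2.5]{GMST} is used in the proof of Lemma \ref{uu4}, to conclude that $\phi$ is itself a Weil representation of $\Sp_{2n}(q)$. As the Weil representations of $\Sp_{2n}(q)$ have degrees $(q^n-1)/2$ and $(q^n+1)/2$, and $(q^n+1)/2=(q^n-1)/2+1$ is not a multiple of $(q^n-1)/2$ for $n>1$, the divisibility $(q^n-1)/2\mid\dim\phi$ forces $\dim\phi=(q^n-1)/2$, which is the desired conclusion.

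The main obstacle is the case $\ell=2$, where the classification of Weil representations degenerates and one can no longer guarantee that $\phi$ is Weil; here the restriction argument yields only that $(q^n-1)/2\mid\dim\phi$ with $1\notin\spec\phi(g)$, leaving open the possibility $\dim\phi=c\cdot(q^n-1)/2$ with $c>1$. Excluding $c>1$ then requires a direct study of the $2$-modular irreducible Brauer characters of $\Sp_{2n}(q)$: known decomposition matrices for the smallest relevant group, together with degree and congruence estimates for the $2$-modular constituents in general. I expect this analysis to isolate precisely the two families that resist exclusion, namely $(G,|g|,\ell)=(\Sp_8(3),41,2)$ and the configuration $\ell=2$, $n>2$ with $q-1>2$ a power of $2$, which is why they appear as the stated exceptions.
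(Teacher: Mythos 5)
Your opening steps are sound: every composition factor $\tau$ of $\phi|_H$ for $H=\Sp_2(q^n)$ satisfies $\deg\tau(g)\leq\deg\phi(g)<|g|$, so \cite[Lemma 3.3]{TZ8} identifies each nontrivial one as a Weil constituent of degree $(q^n-1)/2$ with $1\notin\spec\tau(g)$, and your eigenvalue argument correctly excludes a mixture of trivial and Weil constituents (since $p\neq\ell$, the spectrum of $\phi(g)$ is the union of the spectra on composition factors). The genuine gap is the next step. The characterization theorems you invoke concern restrictions to \emph{standard} subgroups defined over the same field: \cite[Theorem 2.3]{GMST} is applied in Lemma \ref{w4w} to $\diag(\Sp_4(q),\Id_{2n-4})\leq \Sp_{2n}(q)$, and \cite[Theorem 2.5]{GMST} is applied in Lemma \ref{uu4} to the standard subgroup $\SU_3(q)\leq\SU_4(q)$. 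Neither says anything about the field-extension subgroup $\Sp_2(q^n)$, which is where your information lives. Nor does the divisibility $(q^n-1)/2\mid\dim\phi$ put $\phi$ within reach of any low-degree classification: divisibility gives no upper bound on $\dim\phi$, and there are many non-Weil irreducible Brauer characters whose degrees are multiples of $(q^n-1)/2$. Converting ``all constituents of the restriction to a field-extension subgroup are Weil or trivial'' into ``all constituents of the restriction to a standard $\FF_q$-subgroup are Weil or trivial'' is exactly the hard content of the paper's proof: Lemma \ref{p48} settles the base $\Sp_4$ via the decomposition matrices of White and Okuyama--Waki, Lemmas \ref{ww2} and \ref{2ww2} carry out the conversion at the level $\Sp_2(q^2)\subset\Sp_4(q)$ by explicit computations with Srinivasan's character table, Lemma \ref{sp63} handles $\Sp_2(27)\subset\Sp_6(3)$ by a GAP computation, and Lemma \ref{w4w} assembles these into an induction descending from $\Sp_4(q^{n/2})$ through $\Sp_n(q^2)$ to a standard subgroup, at which point \cite[Theorem 2.3]{GMST} finally applies. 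Your outline stops precisely where this work begins.

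This omission also explains why you can only ``expect'' the exceptional cases to come out right, and why for $\ell=2$ your plan is not yet a proof. In the paper the exceptions are not leftovers of an unfinished $2$-modular computation; they are the exact configurations where the bridging lemmas fail. Lemma \ref{2ww2} needs $q>3$ and $q-1$ not a $2$-power, and Remark \ref{fermat} shows this hypothesis cannot be dropped (the character $\theta_{10}$ of $\Sp_4(q)$ genuinely satisfies the restriction hypotheses when $q-1$ is a $2$-power); this is the source of the exception ``$\ell=2$, $n>2$, $q-1>2$ a $2$-power.'' For $(q,n)=(3,4)$ the detour through $\Sp_{16}(3)$ and $\Sp_6(3)$ used in part (b) of Lemma \ref{w4w} is unavailable, which is the source of $(\Sp_8(3),41,2)$. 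Without an argument whose failure points are located this precisely, you cannot certify that your two stated families are the only possible exceptions.
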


We first examine the \reps of  $\Sp_4(q)$. In this section we use notation of \cite{Sr} for conjugacy classes and ordinary \ir characters of $\Sp_4(q)$ unless otherwise explicitly stated.
 
 \begin{lemma}\label{p48}  Let $G=\Sp_{4}(q)$, $q$  odd,  and let $g\in G$ be an irreducible $p$-element. Let $\phi\in  \Irr_\ell G$   with $\dim\phi>1$ and $(\ell,pq)=1$.  Then $\deg\phi(g)=|g|$ unless $\dim\phi=(q^2-1)/2$ and $|g|=(q^2+1)/2$.\end{lemma}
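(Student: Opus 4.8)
The plan is to start from Proposition \ref{tt4}. Since $q$ is odd and $n=2$ is a $2$-power, cases (ii) and (iii) of that proposition (which require $q=2$) cannot occur, so the only way to have $\deg\phi(g)<|g|$ is case (i): $|g|=(q^2+1)/2$, $\deg\phi(g)=|g|-1$, and Sylow $p$-subgroups of $G$ are cyclic. It therefore remains to prove that in this situation $\dim\phi=(q^2-1)/2$. Here $g$ generates the unique index-$2$ subgroup of a cyclic Coxeter torus $T$ of order $q^2+1$, and every nontrivial element of $\langle g\rangle$ is regular semisimple. Since $\langle g\rangle$ is an $\ell'$-group (as $p\neq\ell$), the eigenvalue multiplicities of $\phi(g)$ coincide with the multiplicities of the linear characters of $\langle g\rangle$ in $\phi|_{\langle g\rangle}$; in particular $\deg\phi(g)=|g|-1$ is equivalent to saying that $1$ is not an eigenvalue of $\phi(g)$ while every other $|g|$-th root of unity is, and then $\dim\phi\geq|g|-1=(q^2-1)/2$.

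First I dispose of the liftable case. If the Brauer character $\be$ of $\phi$ is liftable (in particular if $\ell=0$), then the classification recorded in \cite{Z3} (see the remark following Lemma \ref{20tz}) identifies the Weil representation of degree $(q^2-1)/2$ as the only $\phi$ with $\deg\phi(g)=|g|-1$, as desired. So from now on I assume $\be$ is non-liftable and write $\be=\sum_j a_j\chi_j^\circ$ as an integral combination of the restrictions to $\ell'$-elements of the ordinary irreducible characters $\chi_j$ in the $\ell$-block of $\be$. Because $g$ is an $\ell'$-element, the multiplicity of the eigenvalue $1$ of $\phi(g)$ equals
\[
\langle \be|_{\langle g\rangle},1_{\langle g\rangle}\rangle=\sum_j a_j\,\langle \chi_j|_{\langle g\rangle},1_{\langle g\rangle}\rangle ,
\]
a nonnegative integer that must vanish precisely when $\deg\phi(g)=|g|-1$. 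The strategy is to compute each term $\langle \chi_j|_{\langle g\rangle},1\rangle$ from Srinivasan's character table \cite{Sr}: each ordinary $\chi_j$ is either constant on $\langle g\rangle\setminus\{1\}$ (Lemma \ref{nn1}(ii),(iii), when $p\nmid|s|$) or has values there governed by the Deligne--Lusztig formula $R_{T,\theta}(h)=\theta^G(h)$ on regular $h$ used in the proof of Lemma \ref{nn1}, so in either case $\langle\chi_j|_{\langle g\rangle},1\rangle$ is explicitly computable. For the non-Weil irreducible characters this multiplicity is strictly positive — for the discrete-series characters $\pm R_{T,\theta}$ of degree $(q^2-1)^2$, an orbit-sum over $N_G(T)/T$ gives multiplicity $2(q^2-3)>0$, whence $\deg\phi(g)=|g|$ — while for the degree-$(q^2-1)/2$ Weil character it is $0$. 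Feeding these values together with the decomposition numbers $a_j$ into the displayed sum should force the vanishing to occur only when $\be$ is the reduction of the degree-$(q^2-1)/2$ Weil character.

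To make the final step rigorous I would organize the non-liftable characters according to which cyclotomic factor of $|G|=q^4(q^2-1)^2(q^2+1)$ is divisible by $\ell$. When $\ell\mid(q^2+1)$ (so $\ell\neq p$, but both divide $\Phi_4(q)$), Sylow $\ell$-subgroups are cyclic and contained in $T$, and a Green-correspondence argument as in the proof of Lemma \ref{20tz}(b) applies: for $\phi$ in the principal $\ell$-block the Green correspondent relative to $N_G(T)$ is trivial on $O_{\ell'}(N_G(T))\supseteq\langle g\rangle$, so $1$ is an eigenvalue of $\phi(g)$ and $\deg\phi(g)=|g|$; the remaining cyclic-defect blocks are then settled through their Brauer trees. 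When $\ell\mid(q^2-1)$, and especially when $\ell=2$, Sylow $\ell$-subgroups need not be cyclic, and I would instead invoke the explicit $\ell$-modular decomposition matrices of $\Sp_4(q)$ from the literature, combined with the multiplicity formula above, to verify case by case that $\langle\be|_{\langle g\rangle},1\rangle>0$ for every non-liftable $\be$ other than the degree-$(q^2-1)/2$ Weil reduction.

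The main obstacle is precisely this last step. The primes $\ell\mid(q\pm1)$, and above all $\ell=2$, lie outside the cyclic-defect framework, so one cannot argue uniformly and must rely on explicit modular data and the eigenvalue-multiplicity computation. The delicate point is to confirm that — in contrast with the higher-rank statement of Proposition \ref{gu1}, which carries genuine $\ell=2$ exceptions — no exceptional non-liftable character survives for $\Sp_4(q)$, so that the dichotomy in the lemma is clean.
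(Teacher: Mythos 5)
Your reduction via Proposition \ref{tt4} and your treatment of the liftable case via \cite{Z3} agree with the paper, but in the non-liftable case your argument is a program, not a proof. The decisive step --- showing that $\sum_j a_j\langle\chi_j|_{\langle g\rangle},1_{\langle g\rangle}\rangle>0$ for every non-liftable $\be$ other than the Weil reduction --- is deferred to unspecified computations (``should force the vanishing'', ``I would instead invoke''), and this happens precisely in the cases you yourself identify as the obstacle, namely $\ell\mid(q^2-1)$ and above all $\ell=2$. Those are exactly where the substance of the lemma lies, and they contain a difficulty your plan does not address: the $2$-modular decomposition numbers of $\Sp_4(q)$ in \cite{OW1} involve an undetermined parameter $x$ with $0\le x\le (q-1)/2$, so any numerical verification must cope with a whole family of possible decompositions rather than one known matrix. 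Also, your proposed Green-correspondence case ``$\ell\mid(q^2+1)$'' is vacuous: once $\deg\phi(g)<|g|$ forces $|g|=(q^2+1)/2$ to be a $p$-power, we have $q^2+1=2p^k$, so an odd $\ell$ dividing $q^2+1$ would equal $p$, contrary to $(\ell,pq)=1$; and for $\ell=2$ the Sylow $2$-subgroups of $\Sp_4(q)$ are not cyclic, so that part of the plan rests on a misconception (harmlessly, since the case never occurs).

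What makes the paper's verification tractable is a structural step absent from your proposal: since $g$ is irreducible with $|g|$ dividing $q^2+1$, it lies in a subgroup $H\cong\SL_2(q^2)$, and \cite[Lemma 3.3(i)]{TZ8} shows that every nontrivial irreducible constituent $\tau$ of $\phi|_H$ satisfies $\dim\tau=(q^2-1)/2$ and $1\notin\spec\tau(g)$. Hence if $\deg\phi(g)<|g|$ then $1_H$ cannot occur in $\phi|_H$, and therefore $(q^2-1)/2$ divides $\dim\phi$. This converts the entire non-liftable analysis into a check that the \emph{degrees} of the non-liftable Brauer characters (read off from White and Okuyama--Waki) are not multiples of $(q^2-1)/2$ --- a computation with degrees only, supplemented by a single character value at $g$ to eliminate the exceptional parameter value $x=(q-1)/2$, and a table look-up in \cite{JLPW} for $q=3$ (where $\phi$ factors through $\PSp_4(3)\cong\SU_4(2)$ when $\ell=2$). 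Without this divisibility constraint, your route requires character values of all constituents $\chi_j$ on all of $\langle g\rangle$ together with the (partially indeterminate) decomposition numbers, which is exactly the computation you leave undone; so as it stands the proposal has a genuine gap.
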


\bp If $p=2$ then the result is contained in \cite[Prop. 5.7]{TZ8}. Suppose that $p>2$. Then $|g|$ divides $q^2+1$ and 
$g$ is contained in a subgroup $H\cong \SL_2(q^2)$. By \cite[Lemma 3.3(i)]{TZ8}, $|g|=(q^2+1)/2$ (so $(\ell,q^2+1)=1$) and $\deg\phi(g)\geq |g|-1$. Moreover, if $\tau$ is a non-trivial \ir \ccc of $\phi|_H$ then $\dim \tau=(q^2-1)/2$ and $1$ is not an \ei of  $\tau(g)$ as here $p$ is odd and $p\neq \ell$. If $1_H$ is a \ccc of $\phi|_H$ then we are done, otherwise 
$\dim\phi\equiv 0\pmod{(q^2-1)/2}$. Note that Sylow $p$-subgroups of $G$ are cyclic. If $\phi$ is liftable  then $\dim\phi=(q^2-1)/2$ by \cite[Theorem 1.1(5)]{Z3}.
Suppose that $\phi$ is not liftable. 

We use the results of White \cite{Wh1,Wh2,Wh3}, who obtained the decomposition numbers for $\Sp_4(q)$, $q$ odd.  This has been refined in \cite{OW1}.  

Let $\phi\in{\mathcal E}_s$, where $s\in G^*$ is a semisimple $\ell'$-element. By Lemma \ref{rm1}, we can assume that $|s|$ is coprime to $(q^2+1)/2$, so  $|s|$ divides $q^2-1$. (Indeed, the condition $(|s|,p)=p$ in Lemma \ref{rm1}(ii) implies $s^k$ to be of order $p$ for some $k$, and every element of order $p$ in $S^*$ is regular.) 

Suppose first that $\ell>2$. If $\ell$  divides $q-1$ then every \ir Brauer character is liftable,  \cite{Wh2} so we assume $\ell|(q+1)$.   By inspection in \cite{Wh2,Wh3}, 
we conclude that all non-liftable \ir characters are unipotent and lies in the principle block.   
 By \cite{Wh2,OW1}, the  principal  $\ell$-block has 5 \ir Brauer characters $\phi_0=1_G$, $\psi,\psi_s,\psi_t,\psi_{st}$, where $\psi$ is liftable of degree $q(q^2-1)/2$, $\psi_s=\chi_s-1_G$, $\psi_t=\chi_t-1_G$,  $\psi_{st}=\chi_{st}-1_G-\psi_s-\psi_t -\al\psi$, where $\al=2$ unless $(q+1)_3=3$ and $\al=1$. Here $\chi_s,\chi_t,\chi_{st}$ are ordinary unipotent characters, their degrees are  
$\chi_{st}(1)=q^4$, $\chi_{s}(1)=q(q^2+1)/2=\chi_{t}(1)$, $\chi(1)=q(q-1)^2/2$. 
Therefore, 
$$\begin{aligned}\psi(1) & =q(q-1)^2,\\ 
    \psi_s(1)& =\psi_t(1)=q(q^2+1)/2-1=\frac{q(q^2-1}{2}+q-1,\\ 
    \psi_{st}(1) & =q^4-1-\al q(q-1)^2- q(q^2+1)\\
   & =q^4-1-q(q^2-1)-2q-\al q(q^2-1)/ 2+2\al q^2\\
   & =q^4-1-q(q^2-1)-\al q(q^2-1)+2\al (q^2-1)+2(\al -q).\end{aligned}$$  
It follows that $\psi_{st}(1)$ is not a multiple of $(q^2-1)/2$, a contradiction.

Let $\ell=2$. Then there are at most 3 \ir Brauer characters that are not liftable; these are denoted by $\psi_1,\psi_2,\psi_3$, see   \cite[p. 711]{OW1}. For them, in notation of \cite{OW1}, we have 
$$\psi_3=\Phi_5-2\cdot 1_G-\psi_5,~\psi_1=\Phi_3-\phi_4-x\phi_6,~\psi_2=\Phi_4-\phi_5-x\phi_6,$$ 
where $0\leq x\leq (q-1)/2$ (and $1\leq x\leq (q-1)/2$
if $4|(q-3)$. In addition, 
$$\psi_2(1)=\psi_1(1)=\frac{(q-1)^2(q^2+q(1-x)+1)}{2}$$ 
\cite[p. 711]{OW1}, and 
$$\psi_3(1)= \frac{(q^2+1)(q+1)}{2}  -2 -\frac{q^2-1}{2}=\frac{(q^2-1)(q+1)}{2}+q-1-\frac{q^2-1}{2}=\frac{q(q^2-1)}{2}+q-1.$$
So $\phi_3(1)$ is not a multiple of $\frac{q^2-1}{2}$. Clearly, $\frac{(q-1)^2(q^2+q(1-x)+1)}{2}$ is a multiple of  $\frac{q^2-1}{2}$ \ii $ (q-1)(q^2+q(1-x)+1)$ is a multiple of $q+1$.
As $(q-1)(q^2+q(1-x)+1)  \equiv -2(1+x)\pmod{q+1}$, 
$\psi_1(1)\equiv 0\pmod {(q^2-1)/2}$ \ii $1+x$ is a multiple of $(q+1)/2$. As $0\leq x\leq(q-1)/2$,
this implies $x=(q-1)/2$. In this case $\psi_{1,2}(g)=- \psi_{1,2}(1)/\tau(1)=-(q-1)(q+2)/2$. Note that $g$ lies in a class $B_1(i)$ for some $i$ in notation of \cite{Wh1} and \cite{Sr}.  On the other hand, we have $\Phi_3(g)=\Phi_4(g)=0$,  $\psi_4(g)=\psi_5(g)=-1$ and $\psi_6(g)=\theta_{10}(g)=1$ (note that $\psi_5(1)=\psi_6(1)=(q^2-1)/2$ and hence $\psi_5(g)=\psi_6(g)=-1$). 
Therefore, 
$$\psi_1(g)=\Phi_3(g)-\phi_4(g)-x\phi_6(g)=0+1-x=1+\frac{q-1}{2}=\frac{q+1}{2},$$ a contradiction. 

Let $q=3.$ Then $\ell=2,5$. As $p=5$ here, we have $\ell\neq 5$ by assumption.  If $\ell=2$ then $\phi(Z(G))=1$ and $\phi $ can be viewed as a \rep of  $X=\PSp_4(3)\cong \SU_4(2)$. Then the result  follows from the Brauer character table \cite{JLPW}. \enp

\bl{ww2}  Let  $G=\Sp_4(q)$, $q$ odd,   $H=\Sp_2(q^2)$ and let $1_G\neq \be\in\Irr_\ell(G)$ be the Brauer character of G, $(\ell,2q)=1$.   \st   
the \ir constituents of $\be|_{H}$ are   of degrees    $(q^2\pm 1)/2$. Then $ \be(1)=(q^2\pm 1)/2$, and so $\beta$ is a Weil character.  \el
 
\bp  Let $Z=Z(G)=Z(H)$ and let $\be|_{Z}=\zeta\cdot\be(1)$ for some $\zeta\in\Irr_\ell(Z)$. 
Then $\tau|_Z=\zeta\cdot\tau(1)$ for every \ir \ccc $\tau$ of $\be|_H$. Note that that \ir $\ell$-Brauer characters of $H$ of degree $(q^2-1)/2$ and of degree $(q^2+1)/2$ have different restriction to $Z(G)$. This implies  $\tau(1)\in\{(q^2\pm 1)/2\}$ is the same for all $\tau$. In addition,  $\zeta=1_Z$ \ii $\tau(1)$ is odd. Therefore,  $\be|_H=d\cdot \tau$, in particular,
$\be(1)=d\cdot (q^2-1)/2$ if $\zeta\neq 1_Z$, otherwise  $\be(1)=d\cdot (q^2+1)/2$ for some integer $d$.  We also observe that, since $\ell \neq 2$,  for each $\eta = \pm 1$, $G$ and $H$ each have 2 distinct \ir $\ell$-Brauer characters of this degree 
$(q^2-\eta)/2$, and they coincide at semisimple $\ell'$-elements.
 
Let $U$ be a maximal unipotent subgroup of $H$. Then
  $U=\{1\}\cup U_1\cup U_2$, where the elements of $U_j$ are conjugate in $H$ and $|U_1|=|U_2|=(q^2-1)/2$. Let $u_1\in U_1$, $u_2\in U_2$, and let $\tau_1,\tau_2$ be distinct irreducible $\ell$-Brauer characters of $H$ of degree $(q^2-\eta)/2$ with $\eta =\pm 1$. Then $\tau_1(u)\neq \tau_2(u)$ for $1\neq u\in U$, however, $\tau_1(u_1)+\tau_1(u_2)=\tau_2(u_1)+\tau_2(u_2)=-\eta$.
This follows from the character table of $H$ (by reordering $U_1,U_2$ we can assume that $\tau_j(u_1)=(-\eta +(-1)^j)q)/2$ and $\tau_j(u_2)=(-\eta -(-1)^j)q)/2$). This allows us to ignore the difference between $\tau_1,\tau_2$ in some computations below.

Observe that $u_1,u_2$ are not conjugate in $G$ and lie in classes $A_{31}$, $A_{32}$, respectively. (Indeed, if $\phi$ is an ordinary character of $G$ of degree $(q^2-\eta)/2$ then  $\phi|_H$ is \ir by dimension reason; as $\tau(u_1)\neq \tau(u_2)$ also for $\ell=0$ then   $\phi(u_1)\neq \phi(u_2)$, whence the claim.) \itf 
 \begin{equation}\label{eq-b10}
   \be(u_1)+\be(u_2)=-d\eta=-2\be(1)\eta/(q^2-\eta).
\end{equation}   

\smallskip
(a) Suppose first that $\be$ is liftable, say to $\chi \in \Irr(G)$.  By inspection of the character table of $G$ in \cite{Sr}, we observe that \eqref{eq-b10} implies 
\begin{equation}\label{eq-b12}
\chi\in\{\chi_6(k),\chi_8(k),\xi_1(k),\xi_3(k),\theta_3,\theta_4,\theta_7,\theta_{8},\theta_{11},\theta_{12},  \Phi_1,\Phi_2,\Phi_5,\Phi_6,\Phi_9\}
\end{equation} 
in the notation of \cite{Sr}. Observe that $\dim \theta_i\in\{(q^2\pm 1)/2\}$ for $i=3,4,7,8$ so we have to rule out only the remaining characters. Again using \eqref{eq-b10},
we see that they correspond to $\eta=-1$. In other words, either $\eta=-1$ or $\be$ is not liftable.

\med
 (b) Suppose that $\eta=1$. Then $\tau(u_1)+\tau(u_2)=-1$,  
so 
\begin{equation}\label{eq-b11}
\be(u_1)+\be(u_2)=d\be(1)/\tau(1) 
\end{equation}
by \eqref{eq-b10}. In addition, $\be(Z(G))\neq \Id$, so   $\be$ is not in the principle $\ell$-block. By (a),   $\be$ is not liftable. A priory, $\ell$ divides either $q^2+1$ or $q+1$ or $q-1$.  If $\ell \mid (q-1)$, then all \ir Brauer characters are liftable \cite{Wh3}.
If $\ell|(q+1)$, then all non-liftable characters are in the principle block \cite{Wh2,Wh3}.
So we may assume that $\ell|(q^2+1)$. By \cite[p.650]{Wh3},
 there are two non-liftable characters that are not in the principle block. These are $\phi_5=\theta_5^\circ-\theta_7^\circ$ and 
$\phi_{6}=\theta_{6}^\circ-\theta_{8}^\circ$, where $\theta_7(1)=\theta_{8}(1)=(q^2-1)/2$ and $\theta_{5}(1)=\theta_6(1)=q^2(q^2-1)/2$, see \cite[p. 652]{Wh3}.
By \cite{Sr}, $\theta_5(u_i)=\theta_6(u_i)=0$ for $i=1,2$, so $\be(u_1)+\be(u_2)=-(\tau(u_1)+\tau(u_2))=1$ and hence \eqref{eq-b11} implies $d=1$, as required.

\smallskip 

\med 
(c) Suppose that $\eta=-1$.  Then $\be(1)=d(q^2+1)/2$  
 and $Z(G)$ is in the kernel of $\be$. Note that $\tau_j(u_1)+\tau_j(u_2)=-1$ and 
$\tau_1(u_j)+\tau_2(u_j)=-1$ for $j=1,2$. 
 
Let $h\in H$ be a 2-element of maximal order. As $2|q$, $|h| \geq 8$.
Observe that $H\cong \SL_2(q^2)$ is completely reducible in $\Sp_{4}(\overline{\FF_q})$, and the elements of $H$ are conjugate to
 $\diag(X,X^{(q)})$, where $X\in \Sp_2(q^2)$, and $X^{(q)}$ is obtained from $X$ by applying the  \au  
$x\mapsto x^q$ to every matrix entry $x$ of $X$. We may embed $h$ into a cyclic maximal torus $T_1$ of $H$ of order $q^2-1$, and hence 
$h$ realizes as $\diag(x,x\up,x^q,x^{-q})$ for $x\in\FF_{q^2}^\times$ of order $(q^2-1)_2$.  In particular, $x \notin \FF_q$ and $x \neq x^{\pm q}$, $x^{-1}$.
Therefore $h$ lies in the class $B_2(i)$, 
and $\tau(h) \in \{\pm1\}$ when $\tau(1)=(q^2+1)/2$. As $\be|_H=d\tau$, we have 
$$\be(h)=d\tau(h)\in\{\pm d\}.$$ 

\med 
 Suppose first that $\be$ is liftable. By (a), we have to inspect the characters listed in \eqref{eq-b12}.
 As $\Phi_i(h)=0$ for $i=1\ld 9$, and similarly $\xi_1(k)$ and $\xi_3(k)$ vanish at $h$,    
these characters are ruled out.  In the notation of \cite{Sr},
$\chi_6(k)(h)=\be_{ik}$,  $\chi_8(k)(h)=\al_{ik}$ and $d=2q-2,2q+2$, respectively. As
the absolute values of $\be_{ik},\al_{ik}$ do not exceed 2 \cite[p. 516]{Sr}, these two characters are ruled out too. Similarly, $\theta_{11}(h)=1, \theta_{12}(h)=-1$ and $d=q$; hence $\chi\neq \theta_{11},\theta_{12} $. 

\smallskip
Next suppose that $\be$  is not liftable. Then $(\ell,q-1)=1$ by \cite{Wh2}. 

Suppose first that $\ell|(q^2+1)$. Then there are two non-liftable characters, both in the principle block,  see \cite[p. 652]{Wh3}. These are
$\phi_{9}=\theta_{9}-1_G$, $\phi_{13}=\theta_{13}-\phi_9=\theta_{13}-\theta_{9}+1_G$. Then  $2\phi_9(1)=q(1+q)^2-2\equiv -4\pmod{q^2+1}$, and 
$2\phi_{13}(1)=2\theta_{13}(1)-2\phi_9(1) \equiv 6 \pmod{q^2+1}$. In either case, the degree is not divisible by $\tau(1)=(q^2+1)/2$.
 
Suppose that $\ell|(q+1)$. Then non-liftable characters are all unipotent \cite{Wh3}. As mentioned in the proof of Lemma \ref{p48}, there are 
5 unipotent Brauer characters $1_G$, $\eta^\circ$, $\psi_s,\psi_t,\psi_{st}$,
whose degrees are  $1$, $\psi(1)=q(q-1)^2$, $\psi_s(1)=\psi_t(1)=q(q^2+1)/2-1 $, and 
$$\psi_{st}(1)  =q^4-1-\al q(q-1)^2- q(q^2+1)\equiv -\al q(-2q)\equiv   2\al \pmod{(q^2+1)/2}.$$ 
As $\al =1$ or $2$, none of these degrees  is  a multiple of $(q^2+1)/2$, again a contradiction. \enp

\bl{2ww2}  Let  $G=\Sp_4(q)$, $q > 3$ odd,   $H=\Sp_2(q^2)$ and let $1_G\neq \be\in\Irr_\ell(G)$ be a Brauer character of G for $\ell=2$.    \st all the \ir constituents of $\be|_{H}$ are   of degrees   $1$ or $(q^2- 1)/2$. Assume in addition that $q-1$ is not a $2$-power. Then $ \be$ is of  degree    $(q^2- 1)/2$, and so is a Weil character.\el

\bp  Let  $c,d$ be the number of trivial and non-trivial \ir constituents of $\phi|_{H}$, respectively. Then $\be(1)=c+d(q^2-1)/2$. We can ignore the cases with $\be(1)=(q^2-1)/2$. 
  
Let $\nu\in\Irr_2(H)$ with $\nu(1)=(q^2-1)/2$. Let $g\in H$ with  
$$|g|=(q^2+1)/2;$$ this is odd as $4|(q^2-1)$.   Since   $\nu(g)=-1$, we observe that $\be(g)=c-d$. This implies $\be(1)+\be(g)(q^2-1)/2=c(q^2+1)/2$.  Note that $g$ lies in class $B_1(i)$ in the notation of \cite{Sr}.

Let $ U$ be a maximal unipotent subgroup of $H$, and let $1\neq u_1,u_2\in U$ be non-conjugate elements of $H$. Then $(\nu|_U,1_U)=0$, and hence  $(\be|_U,1_U)=c$.  
It follows that $$q^2(\beta|_U,1_U)=cq^2= \be(1)+(\be(u_1)+\beta(u_2))(q^2-1)/2=c+(d+\be(u_1)+\be(u_2))(q^2-1)/2,$$ whence  $c=-\be(g)+\be(u_1)+\be(u_2)$.  
 
This implies 
\begin{equation}\label{eq9}\be(1)=(\be(u_1)+\be(u_2) )(q^2+1)/2 -\be(g)q^2\end{equation}
Indeed, $\be(1)=c+d(q^2-1)/2=c+(c-\be(g))(q^2-1)/2=c(q^2+1)/2-\be(g)(q^2-1)/2=\\ \smallskip =(\be(u_1)+\be(u_2)-\be(g))(q^2+1)/2 -\be(g)(q^2-1)/2=(\be(u_1)+\be(u_2) )(q^2+1)/2 -\be(g)q^2.$ \med

\medskip 
(a) Suppose first that  $\be$ is liftable, say to $\chi \in \Irr(G)$. 
By inspection in \cite{Sr}, one observes that $\chi(1)$ is a multiple of $(q^2+1)/2$ \ii $\chi(g)=0$. 

Suppose that $\chi(g)\neq 0$. Then $\chi\in\{\chi_1(j),\theta_i, i=5,6,7,8,9,10,13\}$
 in notation of \cite{Sr}. 

We have seen in the proof of Lemma \ref{ww2} that $u_1,u_2$ are not conjugate in $G$ and lie in the classes $A_{31}$, $A_{32}$ (up to ordering of $u_1,u_2$). Observe that  $q^2|\chi(1)$ \ii $\chi(u_1)=\chi(u_2)=0$. So $q^2|\chi(1)$
implies $\chi(1)=-\chi(g)q^2$ by (\ref{eq9}). Inspection in  \cite{Sr}   rules  out the cases with $\chi\in\{  \theta_5,\theta_6,\theta_{13}\}$. 

 If $(q^2-1)/2|\chi(1)$ then, again by  Lemma \ref{eq9},  $-\chi(g)+(\chi(u_1)+\chi(u_2))$
is a multiple of $(q^2-1)/2$. Inspection in  \cite{Sr} shows that this is false (unless
$\chi(1)=(q^2- 1)/2$). This rules out the cases with $\chi\in\{\chi_2(j),
\chi_5(k,l),\xi_{21}(k), \xi_{41}'(k)\}$. Let  $\chi=\chi_1(j)$. As $\chi_1(j)=(q^2-1)^2$ and $(\chi(u_1)+\chi(u_2))/2=1$ by \cite{Sr}, we get from \ref{eq9} 
$-\chi(g)=((q^2-1)^2-q^2-1))/q^2=q^2-3$. As $\chi(g)$ is a sum of four $(q^2+1)$-roots of unity, it follows that $|-\chi(g)|\leq 4$, a contradiction. 

Next suppose  that $\chi(g)=0$. Then $\chi(u_1)+\chi(u_2)=2\chi(1)/(q^2+1)$.   Inspection in  \cite{Sr} gives a contradiction for    
$\chi\in\{\chi_3(k,l),\chi_4(k,l),\chi_5(k,l),\chi_7(k)$, $\chi_8(k),\chi_9(k),\xi_1'(k), \xi_3'(k), \xi'_{21}(k), \xi'_{22}(k)$, \\ $\xi_{41}(k),  \xi_{42}(k)$, $\Phi_3,\Phi_4,\Phi_7,\Phi_8,   \theta_1,\theta_5,\theta_6\}$. 
 
We are left with $\chi\in\{  \chi_6(k),\xi_1(k),\xi_3(k),\Phi_1,\Phi_2,\Phi_5,\Phi_6,\Phi_9,\theta_j, j=9,10,11,12\}$. However, the decomposition matrix of 
$G$ modulo 2 \cite[p. 710]{Wh1} tells us that the characters $\Phi_1,\Phi_2,\Phi_5,\Phi_6,\Phi_9,\theta_j, j=9,11,12\}$ are reducible modulo 2, so these
can be ignored. This leaves us with $\chi\in \{\chi_6(k), \xi_1(k),\xi_3(k)$, $\theta_{10} \}$.

Next we choose some non-trivial element $h\in H$ of odd  order dividing $q^2-1$. 
If $q+1$ is a $2$-power, then, since $q > 3$, $(q-1)/2 > 1$ is odd, and we choose $h$ of 
order $(q-1)/2$ in the class $B_8(2)$  in the notation of \cite{Sr}.
If $q+1$ is not a $2$-power, and $q-1$ is not a $2$-power, then we can choose $h$ of order 
$j:=(q^2-1)_{2'}$ in the class $B_2((q^2-1)/j)$ in the notation of \cite{Sr}. 
Observe that $\nu(h)=0$ as $\nu$ is liftable and  $|h|$ divides $(q^2-1)/2=\nu(1)$. Therefore, $\chi(h)=c=\chi(u_1)+\chi(u_2)-\chi(g)$, whence 
$$\chi(h)+\chi(g)=\chi(u_1)+\chi(u_2).$$ 

If $\chi=\chi_6(k)$ then $\chi(g)=0$,  $|\chi(u_1)+\chi(u_2)|=|2-2q| =2q-2$ and $|\chi(h)| \leq q-1$, a contradiction.   

If $\chi=\xi_1(k)$  
then again $|\chi(g)+\chi(h)| \leq q-1$ and  $\chi(u_1)+\chi(u_2)=2-2q$, a contradiction.

If $\chi=\xi_3(k)$  
then $\chi(g)=0$, $\chi(h)=0$ or $(1+q)\al_{2k}$, and  $\chi(u_1)+\chi(u_2)=2+2q$. As $1 \leq k \leq (q-3)/2$ and $\al_{2k}=\al^{2k}+\al^{-2k}$, where 
$\alpha \in \CC^\times$ has order $q-1$, we have $|\al_{2k}| < 2$, we have a contradiction.

If $\chi=\theta_{10}$  
then $\chi(g)=1$, $ \chi(h)= 0$, and  $\chi(u_1)+\chi(u_2)=q$, ruling out this case. 

(If $q-1$ is a $2$-power, when any element $h' \neq 1$ of odd order dividing $q^2-1$ satisfies $(h')^{q+1}=1$, and hence belong to class $B_6(i)$, with $\chi(h')=q-1$. In fact, this   leads to the conclusion that condition "$q-1$ is not a 2-power" cannot be dropped.)
 
\medskip 
(c) Suppose now that $\phi$ is not liftable. By \cite[p. 711]{Wh3}, there are three non-liftable \ir Brauer characters $\phi_1,\phi_2,\phi_3$ and they are   in the principal block. (There is a misprint in writing the decomposition number row of  the character $\xi'_{41}$ \cite[p. 710]{Wh3}, in fact, $\xi'_{41}$ is \ir modulo 2.)
 
Let $\tau\neq \tau'$ be the two \ir Brauer characters of $H$ of degree $(q^2-1)/2$. 
 Then $\tau(h)=\tau'(h)=0$ for $h$ as in (b),  $\tau(g)=\tau'(g)=-1$, and we have $\be(h)=c$, $\be(g)=c-d$, whence $d=\be(h)-\be(g)$.  In addition, $\be(1)=c+d(q^2-1)/2=\be(h)+(\be(g)-\be(h))(q^2-1)/2$. We show that this fails.
  
For this we compute and collect in Table 2, for $\be\in\{\phi_1,\phi_2,\phi_3\}$,  the values $ \be(1),\be(g), \be(h)$ using the character table of $G$ in \cite{Sr} and the decomposition matrix of $G$ modulo 2 provided in \cite{Wh1}.
  Then we have $\phi_i(1)=\phi_i(h)+(\phi_i(g)-\phi_i(h))(q^2-1)/2$. If $i=1,2$ then $\phi_i(h)=0$, whence $\phi_1(1)=\phi_i(g)(q^2-1)/2=(1-x)(q^2-1)/2$, where $x\in\{1,2\}$ by \cite{OW1}. This is a contradiction. 
Let $i=3$. Then $\phi_3(1)=\phi_i(h)+(\phi_i(g)-\phi_i(h))(q^2-1)/2=-2+\frac{q^2-1}{2}$, a contradiction again (as it is well known that $\be(1)\geq (q^2-1)/2$ unless $\be(1)=1$.)  \enp
 
\begin{remar}\label{fermat} 
{\em If $q=3$ then Lemma  \ref{2ww2} is not valid; in fact, this fails if $\be(1)=6$ or $14$.  More generally, we show that if $q-1$ is a $2$-power then 
the restriction of the character $\theta_{10}$ of degree $q(q-1)^2/2$ of $\Sp_4(q)$ 
to $2'$-elements  satisfies the assumptions of Lemma \ref{2ww2}.
Indeed, the assumption on $q$ implies that the nontrivial $2'$-elements in $H=\Sp_2(q^2)$, embedded in $\Sp_4(q)$ have order dividing $q$, $(q+1)/2$ (if $q>3$), or $(q^2+1)/2$, and hence belong to classes $A_{31}$ or $A_{32}$ in the first case, $B_6(i)$ in the second case, and $B_1(i)$ in the third case, in the notation of \cite{Sr}. Let $\eta_{1,2}$ be the two Weil characters of degree $(q^2-1)/2$ of $H$, labeled in such a way that $\eta_1(u_1) = (-1+q)/2$, $\eta_1(u_2)=(-1-q)/2$ for elements $u_1 \in H$ belonging to class $A_{31}$
and $u_2 \in H$ belonging to class $A_{32}$. Now $\theta_{10}(u_1)= 0$, $\theta_{10}(u_2)=q$, and using \cite{Sr} we can check that $\theta_{10}$ agrees with $(q-1)\cdot 1_H + (q-3)/2 \cdot \eta_1 + (q-1)/2 \cdot \eta_2$ on all $2'$-elements of $H$.}\end{remar}
 
\bl{sp63} Embed $H:= \Sp_2(27)$ in $G:=\Sp_6(3)$. Suppose that $\phi \in \Irr_2(G)$ is such that all irreducible constituents of $\phi|_H$ are of degree $1$ or $13$. Then $\phi$ is either trivial or a Weil character.\el

\bp This is proved using $\mathsf{GAP}$ to restrict 
Brauer characters of $\Sp_6(3)$ to a maximal subgroup $\SL_2(27) \rtimes C_3$.\enp

\vskip10pt 
 \begin{center}{Table 2: Some Brauer character values for $\ell=2$  }

 \vspace{10pt}
\begin{tabular}{|c|c|c|c|c| }
\hline
character& $\ell$ & degree    & $g$ & $h$    \\
\hline
 $\Phi_3$& $0$ & $q (q-1)(q^2+1)/2$   & $0$ & $0$  \\
\hline $\Phi_4$& $0$ & $q (q-1)(q^2+1)/2 $   & $0$ & $0$  
\\ \hline
$\Phi_{5}$& $0$ & $ (q^2+1)(q+1)/2 $   & $ 0$ &$ 0$    \\ 
\hline
$\psi_1,\psi_2$&$2$   & $\frac{(q-1)^2(q^2+q(1-x)+1)}{2}$&$1-x$ & $0$  
\\ \hline
$\psi_{3}$& $2$   &$\frac{q(q^2-1)}{2}+q-1$ &  $-1$ & $-2$  \\
  \hline
$\theta_7,\psi_4$& $0,2$   &$ (q^2-1)/2$ &  $-1$ &$0$   \\
  \hline
$\theta_8,\psi_5$& $0,2$   &$(q^2-1)/2$ &  $-1$ &$0$   \\
  \hline
$\theta_{10},\psi_6$& $0,2$   &$q(q-1)^2/2$ &  $1$ &$0$   \\
  \hline
\end{tabular}
\end{center}

 \medskip
\bl{w4w}   Let  $G=\Sp_{2n}(q)$, $n=2^m \geq 2$, and  $L \cong\Sp_4(q^{n/2})\leq G$. Let $\phi\in\Irr_\ell(G)$ with $\ell\nmid q$. 
Suppose  that all non-trivial irreducible constituents of $\phi|_L$ are Weil. 
If $\ell=2$, assume in addition that $n \neq 4$ if $q=3$, and that $q-1$ is not a $2$-power if $q>3$. Then $\phi$ is Weil. \el

\bp   We argue by induction on $m\geq 1$, with the induction base $m=1$ being obvious.

\smallskip
(a) Here we consider the case where either $\ell \neq 2$, or $\ell=2$ but $q-1$ is not a $2$-power. For the induction step $m \geq  2$, set 
$$H:=\Sp_n(q^2)\subset G,~H_1:=\diag(\Sp_2(q^2), \Id_{n-2})\subset H,~H_2:=\diag(\Sp_4(q),\Id_{2n-4})\subset G.$$ 
Note that $L = \Sp_4(q^{n/2})$ can be embedded in $H$.
By the induction hypothesis  applied to $H$, all non-trivial \ir constituents $\tau$ of $\phi|_H$ are Weil. It is  well known that, in this case,  all non-trivial \ir constituents of $ \tau|_{H_1}$ are Weil too, see for instance \cite[Theorem 2]{Z85} or \cite[Proposition 2.2(vi)]{TZ2}.  
As $H_1\subset H_2$, we can apply Lemmas \ref{ww2} and \ref{2ww2} to deduce that  all non trivial \ir constituents of $\phi|_{H_2}$ are Weil. Since $H_2$ is a standard subgroup of $G$, applying \cite[Theorem 2.3]{GMST} we  conclude that $\phi$ is Weil.

\smallskip
(b) Now we consider the case $(\ell,q) = (2,3)$, and first work on the new induction base $m = 3$, so that $G = \Sp_{16}(3)$.  Set
$$H:=\Sp_8(9)\subset G,~H_1:=\diag(\Sp_6(9), \Id_{2})\subset H,~H_2:=\diag(\Sp_2(3^6),\Id_{2})\subset H_1.$$ As in (a), we may assume that $L=\Sp_4(3^4) \subset  H$,
and so all  non-trivial \ir  constituents of $\phi|_L$ are Weil. By part (a) applied to $H$, all  non-trivial \ir constituents $\tau$ of $\phi|_H$ are Weil. In this case,  
all \ir constituents of $\phi|_{H_1}$ are Weil too \cite{TZ2}. In turn, this implies  all \ir constituents  of $\phi|_{H_2}$ to be Weil. As a subgroup  of $G$, we can write $H_2=\diag(\Sp_2(3^6),\Id_4)$, which then embeds into $H_3:=\diag(\Sp_4(27),\Id_4)$. 
Now we can apply Lemma \ref{2ww2} to deduce that  all  non-trivial \ir constituents of $\phi|_{H_3}$ are Weil. Embedding $H_4:= \diag(\Sp_2(27),\Id_{10})$ into $H_3$, we see 
that all  non-trivial  \ir constituents of $\phi|_{H_4}$ are Weil. Now we can embed $H_4$ in a standard subgroup $H_5:= \diag(\Sp_6(3),\Id_{10})$ of
$G$. By Lemma \ref{sp63}, all  non-trivial \ir constituents of $\phi|_{H_5}$ are Weil. Applying \cite[Theorem 2.3]{GMST} to $H_5 < G$, we conclude that $\phi$ is Weil.

For the induction step $m \geq 4$, set
$$H:=\Sp_n(9)\subset G,~H_1:=\diag(\Sp_8(9), \Id_{n-8})\subset H,~H_2:=\diag(\Sp_{16}(3),\Id_{2n-16})\subset G.$$ 
Again, $L = \Sp_4(3^{n/2})$ can be embedded into $H$, and so all \ir constituents of $\phi|_L$ are Weil. By part (a) applied to $H$, all  non-trivial  \ir constituents $\tau$ of $\phi|_H$ are Weil. In this case,  all  non-trivial \ir constituents of $\tau|_{H_1}$ are Weil too. This implies that all  non-trivial \ir constituents of $\phi$ restricted to  a standard subgroup $\Sp_4(3^4)$ of $H_1$ are Weil.
As $H_1\subset  H_2$, we can apply the aforementioned induction base $m=3$ to $H_2$ to deduce that  all  non-trivial  \ir constituents of $\phi|_{H_2}$ are Weil. Since  $H_2$ is a standard subgroup of $G$, we conclude by \cite[Theorem 2.3]{GMST} that $\phi$ is Weil.\enp

\bp[Proof of Proposition {\rm \ref{gu1}}] Here $q$ is odd, $p>2$, $n=2^m$ for some integer $m>0$ and $|g|=(q^n+1)/2$. It is well known that the minimal dimension of a non-trivial \irr of $G$ equals $(q^n-1)/2$. The case $m=1$ follows from Lemma \ref{p48}. In general, we may assume that $g$ lies in a standard subgroup $L:= \Sp_4(q^{n/2})$, and so 
$\deg\tau(g) < |g|$ for all irreducible constituents $\tau$ of $\phi|_L$. By Lemma \ref{p48}, any such $\tau$ is Weil or $1_L$. By Lemma \ref{w4w}, $\phi$ is Weil.
\enp

\bl{rgs} For $q$ odd let $G= \GL_{2n}(q)$, $n>1$, and $H=\Sp_{2n}(q) \subset \SL_{2n}(q)$. Let $1_G\neq \phi\in\Irr_\ell(G)$. Then $\phi|_H$ contains a non-trivial \ir constituent which is not Weil.\el

\bp Note that  \ir Weil \reps of $H_n=\Sp_{2n}(q)$ are of two types, $A,B$, say, and those of the same degree are obtained from each other by a twist
by an outer \au of  $\Sp_{2n}(q)$, in fact, this is an inner \au of $\mathrm{CSp}_{2n}(q)$, the conformal symplectic group.  The restriction  of a Weil \rep to the natural subgroups  regards the types, that is, there are embeddings 
$H_1\ra H_2\ra \cdots\ra H_n$ such that the \ir constituents of the restriction of a Weil \rep $\rho_n$ of type $A$ to $H_i$ are of type $A$ for $i=1\ld n-1$. (See also \cite{TZ2}).
Note that the restriction of 
$\rho_n$ to $H_k\times H_{n-k}$ are known \cite[Theorem 2]{Z85}, in particular, for $\ell\neq 2$ these are tensor products of \ir Weil \reps of    $H_k$ and $ H_{n-k}$, both of type $A$,  with some refinement for $\ell=2$. This implies that  the restriction of 
$\rho_n$ to $H_k\times H_{n-k}$ has no \ir constituent that is a tensor product 
of Weil \reps of $H_k,H_{n-k}$ of different types.  Let $\tau=\tau_k\otimes \tau_{n-k}$
be an \ir constituent in question, where $\tau_k, \tau_{n-k}$ are non-trivial 
 \ir Weil \reps of of $H_k,H_{n-k}$, respectively. Note that  $\GL_{2n}(q)$ contains a subgroup $X \cong \mathrm{CSp}_{2k}(q)\times \mathrm{CSp}_{2(n-k)}(q)$,
so one can twist the \rep $\tau_k\otimes \tau_{n-k}$ by an element of $x\in X$
which acts trivially on $H_k$ and as an outer \au on of $H_{n-k}$. Then
$\tau^x=\tau_k\otimes \tau^x_{n-k}$, where the type of $\tau^x_{n-k}$ is distinct from the type of $\tau_{k}$, which is a contradiction.
(One can also work with $\rho_n$ to $Y=Y_1 \times Y_2$, where $Y_1\in H_k, Y_2\in H_{n-k}$ are long root subgroups, see \cite[Lemma 5.4]{GMST}.)\enp

\begin{corol}\label{ss2} In Proposition {\rm \ref{zu3}} case {\rm (iii)} can be omitted. That is, in the assumptions of Proposition {\rm \ref{zu3}}, 
one of {\rm (i)}, {\rm (ii)}, or {\rm (iv)} holds.\end{corol}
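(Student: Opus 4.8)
The plan is to show that the geometric configuration recorded in Proposition~\ref{zu3}(iii) is never compatible with $\deg\phi(g)<|g|$, so that in that situation one is forced into case (i). Thus I would assume we are in the setting of (iii): $G=\SL_n(q)$ with $n=2^s\geq 4$ a $2$-power, $q$ odd, $g$ an irreducible $p$-element with $|g|=(q^{n/2}+1)/2$, Sylow $p$-subgroups cyclic, and $g$ lying in subgroups isomorphic to $\Sp_n(q)$ and $\SO^-_n(q)$. As in the proof of Proposition~\ref{zu3} I may assume $g$ generates a Sylow $p$-subgroup. Suppose for contradiction that $\deg\phi(g)<|g|$; since $\deg\phi(g)\geq |g|-1$, we have $\deg\phi(g)=|g|-1>1$. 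The key idea is to exploit both ambient subgroups available in (iii), routing the argument through the orthogonal group when $n\geq 8$ and through the symplectic group when $n=4$.

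First I would treat $n\geq 8$, setting $m:=n/2=2^{s-1}\geq 4$. Since $g$ has odd order it lies in $\Omega^-_n(q)=\Omega^-_{2m}(q)$, where it acts irreducibly on the natural module with $|g|=(q^m+1)/2$, and it lifts to an irreducible $p$-element $\tilde g$ of $\Spin^-_{2m}(q)$ of the same order (here $Z(\Spin^-_{2m}(q))$ is a $2$-group and $p$ is odd). As $p$ is a primitive prime divisor of $q^{2m}-1$ dividing $q^m+1$ and $\langle g\rangle$ is already a Sylow $p$-subgroup, Sylow $p$-subgroups of $\Spin^-_{2m}(q)$ are cyclic. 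Since $\deg\phi(g)>1$ and $\Omega^-_n(q)$ is perfect, $\phi|_{\Omega^-_n(q)}$ has a nontrivial irreducible constituent, which pulls back to a nontrivial $\tau\in\Irr_\ell(\Spin^-_{2m}(q))$. Because $m=2^{s-1}$ is a $2$-power it is not an odd prime power, so Lemma~\ref{dne} applies with no exception and gives $\deg\tau(\tilde g)=|g|$; hence $\deg\phi(g)\geq\deg\tau(\tilde g)=|g|$, a contradiction.

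Next I would handle $n=4$, where the orthogonal parameter $m=2$ is too small for Lemma~\ref{dne}, so I use the symplectic subgroup $H\cong\Sp_4(q)$, in which $g$ is an irreducible $p$-element with $|g|=(q^2+1)/2$. I would pass to $\GL_4(q)\supset\SL_4(q)$, choosing $\hat\phi\in\Irr_\ell(\GL_4(q))$ with $\phi$ a constituent of $\hat\phi|_{\SL_4(q)}$; as $g$ is semisimple, Lemma~\ref{ct1} gives $\deg\hat\phi(g)=\deg\phi(g)=|g|-1$. Then Lemma~\ref{rgs} (with $2n=4$, $n=2>1$) produces a nontrivial irreducible constituent $\tau$ of $\hat\phi|_H$ that is \emph{not} Weil, and Lemma~\ref{p48} forces $\deg\tau(g)=|g|$ for such $\tau$, contradicting $\deg\tau(g)\leq\deg\hat\phi(g)=|g|-1$. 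In both cases the assumption fails, so $\deg\phi(g)=|g|$ and we are in case (i).

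The step I expect to be the main obstacle is precisely the choice of ambient subgroup for $n\geq 8$. A uniform symplectic argument via Lemma~\ref{rgs} together with Proposition~\ref{gu1} breaks down exactly in the exceptional configurations $\Sp_8(3)$ (that is $n=8$, $q=3$, $\ell=2$, $|g|=41$) and $\ell=2$ with $q-1$ a $2$-power, where Proposition~\ref{gu1} genuinely leaves the symplectic minimal-polynomial degree undetermined (these are the open cases recorded in Theorem~B(II)(d)). The whole point of sending $n\geq 8$ through $\Spin^-_{2m}(q)$ is that Lemma~\ref{dne} carries no $\ell$-restriction and its single exception ``$m$ an odd prime power'' cannot hold for the $2$-power $m=n/2$, so the argument sidesteps those unresolved symplectic cases entirely. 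The remaining care is routine covering-group bookkeeping—verifying $g\in\Omega^-_n(q)$ (automatic since $|g|$ is odd), that $\tilde g$ stays irreducible of order $|g|$, and that cyclicity of Sylow $p$-subgroups transfers to $\Spin^-_{2m}(q)$—all of which follow since $p$ is an odd primitive prime divisor of $q^{2m}-1$.
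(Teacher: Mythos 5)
Your proof is correct and follows essentially the same route as the paper's: for $n\geq 8$ it passes through the orthogonal subgroup and the $\Spin^-$ result (you invoke Lemma \ref{dne} directly, while the paper invokes Proposition \ref{zo1}, which rests on that lemma), exploiting that $n/2$ is a $2$-power and hence not an odd prime power; for $n=4$ it combines Lemma \ref{p48} with Lemma \ref{rgs} exactly as the paper does. Your $n=4$ step is in fact slightly more careful than the paper's, since you pass to $\GL_4(q)$ via Lemma \ref{ct1} before applying Lemma \ref{rgs}, which is stated for $\GL_{2n}(q)$ rather than $\SL_{2n}(q)$.
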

 
\bp  Recall that $q$ is odd here. Suppose first that $n\geq 8$. Then we may choose $g\in H\cong \SO^-_n(q)$ and  Proposition \ref{zo1} implies  $2 \nmid n/2$. This is a contradiction as $n> 3$ is a 2-power by assumption.

  So we are left with the case where $n=4$.  Then $g$ is contained in a subgroup $H \cong \Sp_4(q)$. By Lemma \ref{p48}, every non-trivial \ir constituent of $\phi|_H$ is of dimension $(q^2-1)/2$, and $|g|=(q^2+1)/2$. As observed in the proof of Lemma \ref{p48}, 1 is not an \ei of $\phi(g)$ and $\deg\phi(g)=|g|-1$. \itf $1_H$ is not a constituent of $\phi|_H$. Hence we arrive at a contradiction by Lemma \ref{rgs}.\enp

\section{Exceptional groups}

In view of our paper \cite{TZ20} we do not need to consider
the groups 
$$G\in\{ {}^2 B_2(q),{}^2 G_2(q), {}^3 D_4(q),\\ {}^2 F_2(q), G _2(q)\}.$$

For any positive integer $n$  let $\Phi_n(q)$ denote the $n^{\mathrm {th}}$ cyclotomic polynomial evaluated at $q$.

\bl{ee4} Let $G\in\{F_4(q),E_6(q),{}^2E_6(q),E_7(q),E_8(q)\}.$ Let $p$ be a prime such that 
Sylow $p$-subgroups of $G$ are cyclic. Then $p|\Phi_i(q)$, where $i$ is as in the second line of the \f table:\el
 
\vspace{10pt}
\begin{center}

Table 3:  
Cyclic Sylow $p$-subgroups in some exceptional groups of Lie type 


\bigskip
\begin{tabular}{|l|c| c|c| c|c|}

        \hline
         $~~~~~~~~~~G$&$F_4(q)$&$E_6(q)$&${}^2E_6(q)$&$E_7(q)$&$E_8(q)$  \\

  \hline
$~~i$  &$8,12$&5,8,9,12& 8,10,12,18&5,7,8,9,10,12,14,18&7,9,14,15,18,20,24,30 \cr
        \hline
     
\end{tabular}
\end{center}

\bigskip

\begin{lemma}\label{sr1}   
Let $ G$ be the exceptional Lie-type group   $F_4(q)$. 
Let $S_p$ be a cyclic \syl of $G$. Then   every $1\neq g\in S_p$ is regular.
\end{lemma}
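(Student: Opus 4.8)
The plan is to combine Lemma~\ref{ee4} with the structure of centralizers of semisimple elements in the simply connected group $\mathbf G$ of type $F_4$ underlying $G=\mathbf G^\F=F_4(q)$. First I would record what Lemma~\ref{ee4} gives: since Sylow $p$-subgroups of $G$ are cyclic, either $p\mid\Phi_8(q)=q^4+1$ or $p\mid\Phi_{12}(q)=q^4-q^2+1$, so $p$ is a primitive prime divisor of $q^d-1$ with $d=\operatorname{ord}_p(q)\in\{8,12\}$. As $d\mid(p-1)$ this already yields $p\geq 13$, and $p\nmid(q^e-1)$ for $0<e<d$. Fixing $1\neq g\in S_p$, I note that $g$ is semisimple (as $p\neq r$); because $\mathbf G$ is simply connected, $C:=C_{\mathbf G}(g)$ is connected reductive of maximal rank by \cite[Theorem 3.5.6]{C}, and clearly $g\in Z(C)$ since $g$ centralizes every element of $C=C_{\mathbf G}(g)$. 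As regularity of $g$ is exactly the statement that $C$ is a maximal torus, the whole task reduces to excluding the possibility that $C$ has a nontrivial root system.

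The second step is to assume $C$ is not a torus and look for a contradiction. Then $[C,C]$ is a nontrivial semisimple group, so the central torus $\mathbf Z:=Z(C)^\circ$ has rank $k\leq 3$, while still $g\in Z(C)^\F$. Applying Lang's theorem to the connected group $\mathbf Z$ I would split $|Z(C)^\F|=|\mathbf Z^\F|\cdot|(Z(C)/\mathbf Z)^\F|$ and handle the two factors separately. The factor $|\mathbf Z^\F|$ is a product of cyclotomic values $\Phi_m(q)$ with $\sum\varphi(m)=k\leq 3$; the decisive numerical coincidence is that $\varphi(8)=\varphi(12)=4$ equals the rank of $F_4$, so no index $m=8,12$ can appear and every occurring $\Phi_m$ has $m\in\{1,2,3,4,6\}$. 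A primitive prime divisor $p$ of order $d\in\{8,12\}$ divides no such $\Phi_m(q)$ (that would force $\operatorname{ord}_p(q)\leq 6$, impossible as $p\geq 13$), whence $p\nmid|\mathbf Z^\F|$.

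The remaining and genuinely most delicate point is the component group $Z(C)/\mathbf Z$: I must show $p\nmid|(Z(C)/\mathbf Z)^\F|$. Here I would argue that this finite group has order dividing the product of the connection indices of the simple factors of $[C,C]$, and that, since those factors arise as subsystem (pseudo-Levi) types inside $F_4$ of total rank at most $4$, this product is a $\{2,3\}$-number, hence coprime to $p\geq 13$. Granting this, $p\nmid|Z(C)^\F|$, contradicting that $1\neq g$ is a $p$-element of $Z(C)^\F$; therefore $C$ is a maximal torus and $g$ is regular. The main obstacle is precisely this control of the component group: one needs to know that $p$ is not a torsion prime for $F_4$ (the torsion primes being $2$ and $3$), so that no $p$-torsion can be concealed in $Z(C)/Z(C)^\circ$. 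Alternatively, one can bypass it by invoking the explicit classification of centralizers of semisimple elements in $F_4$ and checking directly that the center of each such centralizer has order prime to $p$.
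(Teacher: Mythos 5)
Your proof is correct, but it takes a genuinely different route from the paper's. The paper's own proof is essentially a two-line citation: by \cite[Lemma 4.3]{Z3}, the centralizer $C_G(s)$ of an element $s\in S_p$ of order $p$ is \emph{abelian}, whereas a non-regular semisimple element has centralizer in $\mathbf{G}$ containing non-trivial unipotent elements (by \cite[Ch.\ II, Theorem 4.1]{SS}), hence a non-abelian centralizer in $G$; so $s$ is regular, and since $C_{\mathbf{G}}(g)\subseteq C_{\mathbf{G}}(g^k)$ for a power $g^k$ of order $p$, every $1\neq g\in S_p$ is regular. You instead run a self-contained structural argument: assuming $C=C_{\mathbf{G}}(g)$ is not a torus, you kill the $p$-part of $|Z(C)^\F|$ by splitting off the connected center (rank $\leq 3$, so $|Z(C)^{\circ\,\F}|$ is a product of $\Phi_m(q)$ with $m\in\{1,2,3,4,6\}$, none divisible by $p$ because $\mathrm{ord}_p(q)\in\{8,12\}$ and $\varphi(8)=\varphi(12)=4$) and the component group $Z(C)/Z(C)^\circ$ (a $\{2,3\}$-group, since the fundamental groups of subsystems of $F_4$ — types among $A_1,A_2,A_3,B_n,C_n,D_4$ — involve only the torsion primes $2$ and $3$), contradicting $1\neq g\in Z(C)^\F$. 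Both arguments are sound; yours buys independence from the citation to \cite{Z3} and exposes the numerical mechanism — that $\varphi(8)=\varphi(12)$ equals the rank of $F_4$ — which is exactly what drives the analogous regularity statements elsewhere in the paper (cf.\ Lemma \ref{mt6} and Remark \ref{rem75}), while the paper's buys brevity. One small point you should patch: before asserting $\mathrm{ord}_p(q)\in\{8,12\}$ you must exclude $p\in\{2,3\}$, since a prime dividing $\Phi_8(q)$ or $\Phi_{12}(q)$ without having that multiplicative order must divide $8$ or $12$; your bound $p\geq 13$ is derived from $d\mid(p-1)$ and so presupposes this. The fix is immediate — for $p\in\{2,3\}$ one has $p\mid(q^2-1)$, and $G$ contains maximal tori of orders $(q-1)^4$ and $(q+1)^4$, so Sylow $p$-subgroups are not cyclic — but the sentence needs to be said to avoid circularity.
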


\bp \cite[Lemma 4.3]{Z3} claims that $C_G(s)$ is abelian if $s\in S$ is of order $p$. 
In addition, if $s$ is not regular then $C_G(s)$
contains  non-trivial unipotent elements and hence non-abelian
(this follows from \cite[Ch. II, Theorem 4.1]{SS}). Therefore, $s$ is regular, and so is every non-identity element $g\in S_p$.\enp

\bl{u33} Let $G=\SU_3(q)$ and let $\phi$ be an \ir $\ell$-\rep of degree $q^2-q$. Let $Z=Z(U)$, where $U$ is a maximal unipotent subgroup of G. Then $1_Z$ is not a constituent of $\phi|_Z$.  \el

\bp Suppose first that $\ell=0$, and let $\chi$ be the character of $\phi$. Then $\chi(z)=-q$ for $1\neq z\in Z$, see for instance \cite[p. 565]{Ge}.  Then $(1_Z,\chi|_Z)=0$, as required.
Let $\ell>0$. Then $\phi$ is liftable, see e.g. \cite[p. 755]{HM}. As $(\ell,q)=1$,
the result follows. \enp

\bl{mt6} The group $G=E_8(q)$ has a cyclic maximal torus T of order $\Phi_{15}(q)$
and $\Phi_{30}(q)$, 
and every non-trivial element of T is regular. If $(5,q^2+1)=1$ then 
G has a cyclic maximal torus T of order $\Phi_{20}(q)$ and every non-trivial element of it is regular. In particular,   the assumption of Lemma {\rm \ref{20tz}} and {\rm  \cite[Lemma 3.2]{TZ20}} is satisfied for T.  \el 

\bp Observe that $G$ has  maximal tori of orders $\Phi_{15}(q),\Phi_{20}(q),\Phi_{30}(q)$, see \cite[Table C, p.308]{LSS}. Let $T$ be such a torus, and $1\neq t\in T$. Let $\GG$ be a simple algebrac group such that $G=\GG^\F$. Then $C_{\mathbf{G}}(t)$ is connected (as $Z(\mathbf{G})=1)$.
If $t$ is not regular then $C_{\mathbf{G}}(t)$  contains a unipotent element. In addition,
$C_{\GG}(t)$ is reductive and contains a maximal torus of $\GG$.    The maximal subgroups $M$, say, of $G$ containing  $C_G(t)$ are listed in \cite[Table 5.1, p.322]{LSS}. By inspection of their orders we conclude that none of them is divisible by $\Phi_{15}(q)$ or $\Phi_{30}(q)$.

Let $|T|=\Phi_{20}(q)=q^8-q^6+q^4-q^2+1$.
The only maximal subgroups $M$ containing $T$ and a non-trivial unipotent element are  $ SU_5(q^2).4$ and $PSU_5(q^2).4$ \cite[Table 5.1]{LSS}. As $\Phi_{20}(q)$ divides
$ |SU_5(q^2)|=|PSU_5(q^2)|$, it follows that $T$ is conjugate to a maximal torus of
$ SU_5(q^2)$ and $PSU_5(q^2)$. These groups are of the form  $X^\F$,
where $X\subset \GG$ is a simple algebraic group of type $A_4$.
So we can assume that $T$ is a maximal torus in $X^\F$. As  
$C_{\GG}(t)$ is connected, we have  
 $t\in C_G(t)\subset X^\F$, and hence $C_{X^F}(t)$  has a non-trivial unipotent element. 
This implies $t\in Z(X^\F)$,  hence $X^\F=SU_5(q^2)$  and $|t|=5$ (as $t\neq 1$).
This is a contradiction.   \enp

\begin{remar}\label{rem75}
{\em Let $G=E_6(q)$, $(3,q-1)=1$ or ${}^2E_6(q)$, $(3,q+1)=1$. Then $Z(G)=1$ and $G$ has a maximal torus $T$, say, of order  $\Phi_9(q)$ or  $\Phi_{18}(q)$, respectively.
As in the proof of \ref{mt6}, using the fact that no proper parabolic of $G$ is of order
multiple to  $\Phi_9(q)$, respectively,   $\Phi_{18}(q)$, we observe that every non-identity elements of $T$ is regular. It is well known that $G$ is simple.  The pair $(G,T)$ satisfies the assumption of \cite[Lemmas 3.2 and 3.6]{TZ20}. Let $\phi$ be an \ir Brauer character of $G$. Then, by  \cite[Lemma  3.6]{TZ20}, either $\phi$ is liftable, or $\phi|_T=m\cdot \rho_T^{reg}$ for some integer $m>0$ or  $\phi$ is unipotent.  (One can also use 
\cite[Lemma 2.3]{MT}.)}
\end{remar}

\bl{1w1}  Let $G={}^2E_6(q)$ and let $U$
be the root subgroup of $G$ corresponding to the maximal root. Let $P=N_G(U)$ and $L=(P/Q)'$, where $Q$  is  the unipotent radical of $P$. Then $|Q|=q^{21}$ and $L\cong
\SU_6(q)$. Moreover, the conjugation action of  P on Q defines on  $Q/U$ a structure of   an $\FF_qL$-module isomorphic  to  the  third  exterior
power of the natural $\SU_6(q)$-module, and the restriction of $Q/U$ to $\SU_5(q)$ is a direct sum of two   \ir constituents of dimension $10$.  \el

 \bp By  \cite[Propositions 4.6]{CKS} $Q/U$ under the conjugation action of $L$ is an \ir $\FF_{q^2}$-module of dimension 20. Let $\overline{L}$ be the simple algebraic group  of type $A_5$. Then  this extends to an  $\overline{L}$-module. By inspection in \cite{Lu},
$\overline{L}$ has a unique \ir module of dimension 20, and the highest weight of it is
the third fundamental weight $\om_3$. So this module  is selfdual, and   is isomorphic to the third exterior power of the natural 
$SL_6(F)$, where $F$ is the algebraic closure of $\FF_q$. The restriction of this module to
$SL_5(F)$ has a composition factor of dimension 10, the exterior square of the natural $SL_5(F)$-module (see for instance, \cite[Lemma 5.10]{z23}). As $Q/U$ is self-dual, so is its  restriction
to   $SL_5(F)$. As the above factor is not self-dual,   there is another factor of dimension 10.
The restriction of each of them to $SU_5(q)$ is well known to be \irt 
\enp

\bl{ee2} Let $G\in\{F_4(q),E_6(q),{}^2E_6(q),E_7(q),E_8(q)\}$ be  of simply connected type, and let $g$ be  a p-element. Let $1_G\neq \phi\in\Irr_\ell(G)$, $\ell\nmid pq$. Suppose that Sylow p-subgroups of G are cyclic. Then $\deg\phi(g)= |g|$ if one the \f holds:

\begin{enumerate}[\rm(i)]

\item $p|\Phi_5(q) $ and $G\in\{E_6(q),E_7(q)\};$

\item   $p|\Phi_8(q) $ and $G\in\{F_4(q),E_6(q),{}^2E_6(q),E_7(q)\};$ 

\item  $p|\Phi_{7}(q)$ and $G\in\{E_7(q),E_8(q)\}$;

\item  $p|\Phi_{9}(q)$,   $G=E_7(q),E_8(q)$, or $G=E_6(q)$ with $3|(q-1);$

\item  $p|\Phi_{10}(q)$ and $G\in\{ E_7(q),{}^2E_6(q)\};$

\item   $p|\Phi_{12}(q)$ and $G\in\{F_4(q),E_6(q),{}^2E_6(q),E_7(q)\};$ 

\item  $p|\Phi_{14}(q)$ and $G\in\{E_7(q),E_8(q)\};$

\item  $p|\Phi_{18}(q)=q^6-q^3+1$. Moreover, either $3|(q+1)$ and $G\in \{{}^2E_6(q),E_7(q)\}$, or $G=E_8(q);$

\item $p|\Phi_{20}(q)$, $G=E_8(q)$, and $5|(q^2+1)$ but $\ell \neq 5$;

\item   $p|\Phi_{24}(q)$ and $G=E_8(q).$ 
\end{enumerate}
\el

\bp    Suppose that $p|\Phi_{5}(q)$. The subgroup
$E_6(q)\subset E_7(q)$ contains a subgroup $H$ isomorphic to $\SL_6(q) $. 
 So we can assume that $g\in H$,   and  the result follows by  
\cite[Theorem 1.1]{DZ1} as $g$ is contained in a parabolic subgroup of $H$. 

\smallskip
 Let  $p|\Phi_{7}(q)$. Then    $G\in \{ E_7(q), E_8(q)\}$ by Table 3. As $G$ contains a subgroup $H\cong \SL_8(q)$ and $|H|$ is a multiple of $q^7-1$, we can assume $g\in H$.
In fact, $g$ is contained in a subgroup $X\cong \SL_7(q)$.  Then the result follows  from \cite{DZ1} as in case (1).   

\smallskip
 Suppose that $p|\Phi_{8}(q)$. As Sylow $p$-subgroups are cyclic, by Table 3 we have  $G\in \{F_4(q), E_6(q)$, ${}^2E_6(q), E_7(q)\}$. Note that $G=F_4(q)$ contains a subgroup isomorphic to $\Spin_9(q)$, and also $\Spin^-_8(q)$ for $q$ odd; if $q$ is even then $G$ contains $\Sp_8(q)$, see \cite[Table 5.1]{LSS}.  
As $p|\Phi_8(q)=q^4+1$, by Sylow's theorem,  $g$ is contained in a subgroup $H$ of $G$ isomorphic to $\Spin^-_8(q)$, $q$ odd, or $\mathrm{Sp}_8(q)$, $q$ even. So the result follows from Lemma \ref{dn8} for $q$ odd and from Proposition \ref{tt4} for $q$ even, applied to $\phi|_H$.

\smallskip
 Let  $p|\Phi_{9}(q)$ and  $G= E_6(q)$. Then $g$ lies in a subgroup $X$ of $G$ isomorphic to $\SL_3(q^3)$. If $3|(q^3-1)$ then the result follows by \cite[Lemma 2.6]{TZ20}.
Note that $3|(q^3-1)$ \ii   $3|(q-1)$. 
 
Let $p|\Phi_{9}(q)$ and $G=E_7(q)$. Note that $E_6(q)$ lies in a parabolic subgroup $P_7$ of  $E_7(q)$ whose unipotent radical $U$ is abelian (of order $q^{27}$), see for instance \cite[Table 4]{Ko}. As  
 $  C_{\lan g \ran}(U)=1$
(see \cite[\S 13.2]{GL}),  we have $\deg\phi(g)=|g|$ by Higman's lemma (or see \cite[Lemma 2.3]{TZ20}).  This also rules out the case with $p|\Phi_{9}(q)$ and $G=E_8(q)$.
 
\smallskip
 Let $p|\Phi_{10}(q)$. Then $G\in\{ {}^2E_6(q), E_7(q)\}$. Let $P,Q,L$ be subgroups of 
${}^2E_6(q) \subset  E_7(q)$ introduced in Lemma \ref{1w1}, and $H\cong \SU_5(q)\subset L\cong \SU_6(q)$. Observe that $|H|$ is a multiple of $\Phi_{10}(q)$. As Sylow $p$-subgroups of $G$ are cyclic (see Table 3), it follows that $g$ is conjugate to an element of $H$. So we can assume that $g\in H$. 
  
The unipotent radical $Q$ of $P$ is of extraspecial type,   
and the conjugation action of $P$ on $Q$  yields on $V:=Q/Z(Q)$, viewed as an $\FF_qL$-module,  a structure of an (absolutely irreducible self-dual) $\FF_{q}L$-module of dimension 20, the 3rd exterior power of the natural $\FF_{q^2}H$-module (Lemma \ref{1w1}). 
 
Let $\tau$ be an \ir constituent of $\phi|_{QL}$ non-trivial on $Z(Q)$.     
 By  Theorem \ref{hhs}, if $\deg\tau(g)<|g|$ then 
$g$ acts irreducibly on $V/C_V(g)$.  This is  false  
as, by Lemma \ref{1w1}, the restriction of $V$ to $H$ has two \ir constituents  dual to each other. 
  
 If $p|\Phi_{12}(q)$ then $G\in\{F_4(q),E_6(q),{}^2E_6(q),E_7(q)\}$ by Table 3. Each of these groups has a subgroup isomorphic to   $H={}^3D_4(q)$, see \cite[Table 5.1]{LSS}. As  $\Phi_{12}(q) $ divides the order of $H$, we conclude   that $g$ is conjugate to an element of $ H$.  Then the result  follows from \cite{TZ20}.

\smallskip 
  Suppose that $p|\Phi_{14}(q)$ and $G= E_7(q) \subset E_8(q)$. It is known that $ \SL_2(q^7) \subset E_7(q) $ (see for instance \cite[p. 927]{HSTZ}). 
As $\Phi_{14}(q)$ divides $|\SL_2(q^7)|$, it follows 
$g$ is conjugate to an element of $H\cong  \SL_2(q^7)$. If $\deg\phi(g)<|g|$
then $\deg\tau(g)<|g|$ for every \ir \ccc $\tau$ of $\phi|_H$. By \cite[Lemma 3.3]{TZ8},
$|g|=(q^7+1)/(2,q+1)$. However,  $(q^7+1)/(2,q+1)$ is not a prime power. (Indeed, 
$q^7+1=(q+1)\Phi_{14}(q)$ and $\Phi_{14}(q)=q^6-q^5+q^4-q^3+q^2-q+1\equiv 7\pmod{q+1}$, and then $q+1$ would be a 7-power, contrary to Lemma \ref{zgm}.)

\smallskip

 Let $p|\Phi_{18}(q)=q^6-q^3+1$ and
  $G\in\{ {}^2E_6(q), E_7(q),E_8\}$.

Suppose first that $G={}^2E_6(q)$. Note that $G$ contains a group $H\cong \SU_3(q^3)$
\cite[Table 10]{C23}. Then 
$g$ is conjugate to an element in $H$. Note that $p>3$ and $p \nmid (q^3+1)$ as Sylow $p$-subgroups of $G$ are cyclic.  Let $\tau$ be a non-trivial \ir \ccc of $\phi|_H$. Suppose the contrary, that $\deg\phi(g)<|g|$. Then $\deg\tau(g)<|g|$, and,
 by  \cite[Proposition 6.1]{TZ8},  $(3,q^3+1)=1$, $|g|=q^6-q^3+1$ 
and  
$\deg\tau=|g|-1$. This implies the result for $G={}^2E_6(q)$ and $E_7(q)$.
  
Note that 
$E_7(q)$ is subgroup of a parabolic subgroup $P_8$ of $G$ whose unipotent radical $Q$ is a group of extraspecial type of order $q^{57}$, see \cite[Proposition 4.4]{CKS}. Let $L$ be a Levi subgroup of $P_8$ and $L'$ the derived subgroup of $L$. Then  $L'\cong E_7(q)$. As $Q/Z(Q)=q^{56}$,
and the minimum dimension of a non-trivial \irr of $E_7(q)$ in characteristic $r$ is known to be $56$, it follows that $E_7(q)$ acts on $Q/Z(Q)$
irreducibly. (Alternatively see \cite[Table 5]{Ko}.) 
Let  $\si$ be an \ir constituent of $\phi|_{L'Q}$ non-trivial on $Z(Q)$. As a \syl is cyclic,
$g$ is conjugate to an element of $L'$, so we can assume that $g\in L'$. 
 By  Theorem \ref{hhs}, $\si(g)$ has $|g|$ distinct \eis unless 
$g$ acts irredicibly on $W:=Q/C_Q(g)$, when viewing $W$ as a vector space over $\FF_r$. 
Note that $W$ is in fact an $\FF_q L'$-module (see \cite[Lemma 7]{ABS}) and $g$ remains \ir  on this module. This property is studied in \cite{TZ21} and \cite{z24}. Recall that $W$ extends to a module $\mathbf{W}$ over the algebraic group of type $E_7$. By  \cite[Theorem 2]{TZ21},
$g$ is regular in  $E_7$.   By  \cite[Theorem 1.2]{z24}, $\dim C_{\mathbf{W}}(g)\leq 2$
and hence $\dim C_{W}(g)\leq 2$ (over $\FF_q$). 
By assumption, $|g|$ divides $\Phi_{18}(q)|(q^{9}+1)$, and hence the dimension of a non-trivial \ir $\FF_q\lan g\ran$-module equals 18.  This is a contradiction as $\dim_{\FF_q} W=56$. 

\smallskip
Suppose that $p|\Phi_{20}(q)$ and $G=E_8(q)$.  As $  \SU_{5}(q^2)\subset   E_8(q)$ \cite[p. 322]{LSS} and $\Phi_{20}(q)$ divides $|\SU_{5}(q^2)|$, we can assume that  $g\in H\cong \SU_{5}(q^2)$.
Then the result follows from  Proposition \ref{z00} applied to a non-trivial \ir constituent of $\phi|_H$. 


\smallskip  Finally let  $p|\Phi_{24}(q)$.  As $\Phi_{24}(q)$ divides $|{}^3D_4(q^2)|$ and 
${}^3D_4(q^2) \subset  E_8(q)$, by Sylow's theorem, $g$ is conjugate to an element of ${}^3D_4(q^2)$, so the result follows from \cite{TZ20}.\enp

\bl{aa2} Let $G\in\{E_6(q),{}^2E_6(q),E_7(q),E_8(q)\}$ be a group of simply connected type, and let $g \in G$ be  a p-element. Let $1_G\neq \phi\in\Irr_\ell(G)$
with $\ell \nmid pq$. Suppose that Sylow p-subgroups of G are cyclic. Then $\deg\phi(g)\geq |g|-1$. In addition, $\deg\phi(g)= |g|$ unless one of the \f holds:

\begin{enumerate}[\rm(i)]
\item $G=E_6(q)$, $(3,q-1)=1$ and $|g|=q^6+q^3+1;$

\item $G={}^2E_6(q)$ or $E_7(q)$, $(3,q+1)=1$ and $|g|=q^6-q^3+1;$

\item $G=E_8(q)$, $|g|=\Phi_i(q)$ with $i\in\{15,20,30\}$; moreover, if $i=20$ then either $5 \nmid (q^2+1)$ or $\ell=5|(q^2+1)$.
\end{enumerate}
In addition, the Brauer character $\be$ of $\phi$ is in a unipotent block.
\el
 
\bp By Lemma \ref{ee4}, the triple $(G,p,i)$ are as in Table 3, and the order of a  \syl of $G$ divides $\Phi_i(q)$.  We only need to examine the cases that have not been treated in Lemma \ref{ee2}.  We proceed case-by-case. 
  
\smallskip
(a) Suppose that $p|\Phi_{9}(q)$,   $G= E_6(q)$, $3|(q-1)$ or $p|\Phi_{18}(q)$, $G= {}^2E_6(q)$ and $3|(q+1)$.  By \cite[p. 322]{LSS}, $G$ contains a subgroup  $H\cong \SL_3(q^3) $, respectively,  $\SU_3(q^3)$. As $\SL_3(q^3)$ is a multiple of $\Phi_{9}(q)$, respectively, $\SU_3(q^3)$ is a multiple of $\Phi_{18}(q)$, 
it follows that we can assume that $g\in H$.   By \cite[Proposition 6.1(ii)]{TZ8}, if $1_H\neq \tau\in\Irr_\ell (H)$, $g\in H$ and $\deg\tau(g)<|g|$ then   $|g|=(q^9-1)/(q^3-1)$, respectively, $(q^9+1)/(q^3+1)$   and $\deg\tau(g)=\dim\tau=|g|-1$ and $1$ is not an \ei of $\tau(g)$.

As we explained in Remark \ref{rem75},   every non-identity element of maximal torus of order $\Phi_9(q)$ in $E_6(q)$, respectively, $\Phi_{18}(q)$ in ${}^2E_6(q)$ is regular and  $\be$ is either  unipotent or liftable. In the latter case $\deg\phi(g)=|g|$ by \cite{Z3}. 
So the result follows.


 \smallskip
(b) Suppose that $p|\Phi_{20}(q)$.  As $  \SU_{5}(q^2)\subset   E_8(q)$ \cite[p. 322]{LSS} and $\Phi_{20}(q)$ divides $|\SU_{5}(q^2)|$, we can assume that  $g\in H\cong \SU_{5}(q^2)$.
Then the result follows from  Proposition \ref{z00}   applied to a non-trivial \ir constituent of $\phi|_H$. 
By Lemma \ref{mt6}, every non-identity element of the maximal torus $S$ of $G$ with $g\in S$  is regular.  By Lemma \ref{20tz}, 
$\be$ is either   liftable or unipotent and $\deg\phi(g)=|g|-1$. In the former case $\deg\phi(g)=|g|$ by \cite{Z3}.  

\smallskip
(c) We are left with the \f cases: $E_8(q)$, $i=15,30$. By Lemma \ref{mt6},  $G$ has cyclic maximal tori   $\Phi_{15}(q)$ and $\Phi_{30}(q)$, both satisfy the assumption of Lemma \ref{20tz}. By this lemma, we are done unless $\phi $ is liftable. In the latter case $\deg\phi(g)=|g|$ by \cite{Z3}. \enp
 
\bp [Proof of Theorem {\rm \ref{mm1}}]  
 This follows from Propositions \ref{zu3}, \ref{tt4} and \ref{zo1} for classical groups  and from Lemmas \ref{ee2} and  \ref{aa2} for exceptional groups.  
\enp

\begin{remar} {\em Observe that, in the situation of Theorem \ref{mm1}, $\deg\phi(g)= |g|-1$ implies that $\phi(g)$ does not  have \ei 1. Note that $p$  is odd. If $P$ is a cyclic Sylow $p$-subgroup of $G$ then $N_G(P)\neq C_G(P)$, see \cite[Theorem 39.1]{Asch}. As usual, we may assume that $P =\langle g \rangle$.
Then we can find some $x \in N_G(P)$ such that  $xgx\up=g^i$ for some $i>1$, and   $(|x|,p)=1$. As $|{\rm Aut}(P)|=(p-1)m$, where $m$ is a $p$-power, $x^{p-1}\in C_G(P)$,
so we can assume that  $i|(p-1)$. 
\itf  $\lam^i$ is an \ei of   $\phi(g)$ \ii so is $\lam$, and $\lam^i\neq \lam$ for $\lam\neq 1$.  
Now, if $\lam \neq 1$ is not an \ei of $\phi(g)$ then so
is $\lam^i$, and thus $\deg\phi(g)\leq |g|-2$, a contradiction. } 
\end{remar}

\bp [Proof of Theorem {\rm \ref{mm2}}] 
Suppose first that $G=\SL^\ep_n(q)$. By Proposition \ref{zu3}, either (i) or (ii) of Theorem 
\ref{mm2} holds or $G=\SL_n(q)$, $q$  odd and $n$ is a 2-power. The latter case is ruled out by Corollary \ref{ss2}.   

Suppose that $G\in \{\Spin^-_{2n}(q),\Spin_{2n+1}(q),\Spin^+_{2n+2}(q)\}$, $n\geq 3$.  Then the result follows from Proposition \ref{zo1}. 

For exceptional groups of Lie type the result  follows, as above,  from Lemmas \ref{ee2} and  \ref{aa2}. \enp 

 \section{Universal central extensions of finite simple groups of Lie type}

The above results do not cover the case where a simple group $G$ of Lie type has
a non-standard universal central extension $\tilde G$. It is known that this happens exactly when $|Z(\tilde G)|$ is a multiple of $r$, the defining characteristic of $G$. These cases are well known, see for instance \cite[p. 313]{GLS}.  We list these simple groups below,  indicating the structure of $Z(\tilde G)$ and the primes $p$ such that a \syl of $G$ is cyclic.

\med\noindent
$(\SL_2(4),  C_2), p=3,5$; $(\PSL_2(9),C_6),p=5$;  
  $(\SL_3(2), C_2),p=3,7$;  $(\SL_4(2), C_2),p=5,7$; $(\PSL_3(4)$, $C_{12} \times C_{4}),p=5,7$;  $(\PSU_4(3)$, $C_{12}\times C_3), p=5,7$; $(\PSU_6(2), C_6\times C_2),p=5,7,11$; $(\Sp_6(2),C_2),p=5,7$;
$(\Omega_8^+(2),C_2\times C_2),p=7$; $(G_2(4),C_2),p=7,13$; $( F_4(2), C_2),p=13,17$; $({}^2E_6(2),C_6\times C_2),p=11,13,17,19$; 
 $(\Omega_7(3),C_6),p=5,7,13$; $(G_2(3), C_3),p=7,13$; $ ({}^2 B_2(8), C_2\times C_2), p=13$.  (\itf  $p\leq 19$.)

\med

For completeness we provide information on these cases. Observe that the Brauer characters are available in \cite{JLPW}
for all the above groups except the non-trivial central extensions of the simple groups $ F_4(2), \Omega_7(3)$ and ${}^2E_6(2)$.  For these groups the decomposition matrices are provided in \cite{Lu} except for $\ell=3,5,7$ for $ {}^2E_6(2)$.

\bp[Proof of Proposition {\rm \ref{m9t}}]  Let $p=19$.  Then $G/Z(G)={}^2E_6(2)$. Then $G$ contains a subgroup $H$ with $H/(H\cap Z(G))\cong \PSU_3(8)$, and the order of $H$ is a multiple of 19. So we can assume that $g\in H$. By \cite[Proposition 6.1(ii)]{TZ8}, an \ir  $p$-element $x\in X=\SU_3(q)$ has $|x|$ distinct \eis in every non-trivial \ir $F$-\rep of $X$ (for $q>2$) whenever 3 divides $Z(X)$. As $X\cong \SU_3(8)$, this is the case,
and hence $\deg\phi(g)=|g|$. 


Let $p=17$.  Then $G/Z(G)\in\{F_4(2),{}^2E_6(2)\}$. Then $\Sp_4(4) \subset  \Sp_8(2) \subset F_4(2) \subset {}^2E_6(2)$, see \cite{Atlas}, and we can assume that $g\in H=\Sp_4(4)$.
By Lemma \ref{tt4}, $\deg\tau(g)=|g|$ for $g\in \Sp_4(4)$ with $|g|=17$ and $\tau\in\Irr_\ell H$. So the result follows by applying this fact to a non-trivial \ir \ccc of $\phi|_H$. 

Let $p=13$.  Then $G/Z(G)\in\{{}^2B_2(8),G_2(4), F_4(2)$, ${}^2E_6(2), G_2(3)$, $\Omega_7(3)\}$. 
If $G/Z(G)\in\{{}^2B_2(8), G_2(4)$, $G_2(3)$, $\Omega_7(3)\}$ then the result follows by \cite{JLPW}. Observe that $H={}^3D_4(2) \subset   F_4(2) \subset   {}^2E_6(2)$ and $H$ has an element of order 13. Therefore, we can assume that $g\in H$. By \cite{JLPW}, $\deg\tau(g)=|g|$ for $g\in H$ with $|g|=13$. So the result follows by applying this fact to a non-trivial \ir \ccc of $\phi|_H$. 
 
Let $p=11$.  Then $G/Z(G)\in\{\PSU_6(2), {}^2E_6(2)\}$.  Using the decomposition matrix in \cite{Lu}, one observes that   $\deg\phi(g)=|g|$ for $g\in G$ with $|g|=5,7,11$.  
As ${}^2E_6(2)$ contains $\PSU_6(2),$ the result follows in this case. So we can ignore this group also for $p=5,7$.

Let $p=7$.  Then 
$$G/Z(G)\in\{\SL_3(2),  \SL_4(2), \PSL_3(4),\Sp_6(2),\PSU_4(3),\PSU_6(2), \Omega_8^+(2),G_2(3),G_2(4)\}.$$  
 For the groups $G/Z(G)$ other than $PSU_6(2),$ one can check the proposition by the Brauer character tables in \cite{JLPW}.

 Let $p=5$.  Then $G/Z(G)\in\{\SL_2(4),   \SL_4(2),\Sp_6(2),\PSL_3(4),\PSU_4(3),
\PSU_6(2)\}$. The case with $G/Z(G)=\PSU_6(2)$ has been already ruled out. For the other groups one can use the Brauer character tables in \cite{JLPW}.

Let $p=3$. Then $G/Z(G)\in\{\SL_2(4),\SL_3(2)\}$ and the result follows by inspection in \cite{JLPW}. \enp

\end{document}